\documentclass[a4paper,12pt]{amsart}
\usepackage{amssymb}
\usepackage{ifthen}
\usepackage{cite}
 \usepackage[dvips]{graphicx}
\nonstopmode \numberwithin{equation}{section}
\setlength{\textwidth}{15cm} \setlength{\oddsidemargin}{0cm}
\setlength{\evensidemargin}{0cm} \setlength{\footskip}{40pt}
\pagestyle{plain}

\usepackage{amssymb}
\usepackage{ifthen}
\usepackage{graphicx}
\usepackage{amsmath}
\usepackage[T1]{fontenc} 
\usepackage[utf8]{inputenc}
\usepackage[usenames,dvipsnames]{color}
\usepackage{color}
\usepackage[english]{babel}
\usepackage{fancyhdr}
\usepackage{fancybox}
\usepackage{tikz}

\nonstopmode \numberwithin{equation}{section}
\setlength{\textwidth}{15cm} \setlength{\oddsidemargin}{0cm}
\setlength{\evensidemargin}{0cm} \setlength{\footskip}{40pt}
\pagestyle{plain}

\theoremstyle{plain}
\newtheorem{cor}[equation]{Corollary}
\newtheorem{lem}[equation]{Lemma}

\newtheorem{prop}{Proposition}

\newtheorem{conj}{Conjecture}

\theoremstyle{definition}
\newtheorem{defn}{Definition}[section]

\newtheorem{thm}{Theorem}[section]
\newtheorem{rem}{Remark}[section]
\newtheorem{prob}{Problem}[section]


\newcounter{minutes}\setcounter{minutes}{\time}
\divide\time by 60
\newcounter{hours}\setcounter{hours}{\time}
\multiply\time by 60
\addtocounter{minutes}{-\time}

\newcounter {own}
\def\theown {\thesection       .\arabic{own}}

\newenvironment{pf}[1][]{%
 \vskip 3mm
 \noindent
 \ifthenelse{\equal{#1}{}}%
  {{\slshape Proof. }}%
  {{\slshape #1.} }%
 }%
{\qed\bigskip}

\newcounter{alphabet}





\def\be{\begin{equation}}
\def\ee{\end{equation}}

\newcommand{\bee}{\begin{enumerate}}
\newcommand{\eee}{\end{enumerate}}

\newcommand{\blem}{\begin{lem}}
\newcommand{\elem}{\end{lem}}
\newcommand{\bthm}{\begin{thm}}
\newcommand{\ethm}{\end{thm}}
\newcommand{\bcor}{\begin{cor}}
\newcommand{\ecor}{\end{cor}}
\newcommand{\beg}{\begin{examp}}
\newcommand{\eeg}{\end{examp}}
\newcommand{\begs}{\begin{examples}}
\newcommand{\eegs}{\end{examples}}

\newcommand{\bdefn}{\begin{defn}}
\newcommand{\edefn}{\end{defn}}

\newcommand{\bprob}{\begin{prob}}
\newcommand{\eprob}{\end{prob}}
\newcommand{\bei}{\begin{itemize}}
\newcommand{\eei}{\end{itemize}}

\newcommand{\bcon}{\begin{conj}}
\newcommand{\econ}{\end{conj}}
\newcommand{\bcons}{\begin{conjs}}
\newcommand{\econs}{\end{conjs}}
\newcommand{\bprop}{\begin{prop}}
\newcommand{\eprop}{\end{prop}}
\newcommand{\br}{\begin{rem}}
\newcommand{\er}{\end{rem}}
\newcommand{\brs}{\begin{rems}}
\newcommand{\ers}{\end{rems}}
\newcommand{\bo}{\begin{obser}}
\newcommand{\eo}{\end{obser}}
\newcommand{\bos}{\begin{obsers}}
\newcommand{\eos}{\end{obsers}}
\newcommand{\bpf}{\begin{pf}}
\newcommand{\epf}{\end{pf}}
\newcommand{\ba}{\begin{array}}
\newcommand{\ea}{\end{array}}
\newcommand{\beq}{\begin{eqnarray}}
\newcommand{\beqq}{\begin{eqnarray*}}
\newcommand{\eeq}{\end{eqnarray}}
\newcommand{\eeqq}{\end{eqnarray*}}

\begin{document}

\title{Bohr-type inequalities for classes of analytic maps and K-quasiconformal harmonic mappings}

\author{Molla Basir Ahamed}
\address{Molla Basir Ahamed, Department of Mathematics, Jadavpur University, Kolkata-700032, West Bengal, India.}
\email{mbahamed.math@jadavpuruniversity.in}
\author{Sabir Ahammed}
\address{Sabir Ahammed, Department of Mathematics, Jadavpur University, Kolkata-700032, West Bengal, India.}
\email{sabira.math.rs@jadavpuruniversity.in}

\subjclass[{AMS} Subject Classification:]{Primary 30A10, 30H05, 30C35, Secondary 30C45}
\keywords{Bounded analytic functions, Bohr's inequality, Bohr-Rogosinski inequality, Schwarz-Pick lemma, Subordination, $ K $-quasiconformal harmonic mappings.}

\def\thefootnote{}
\footnotetext{ {\tiny File:~\jobname.tex,
printed: \number\year-\number\month-\number\day,
          \thehours.\ifnum\theminutes<10{0}\fi\theminutes }
} \makeatletter\def\thefootnote{\@arabic\c@footnote}\makeatother

\begin{abstract} 
In this paper, a significant improvement has been achieved in the classical Bohr's inequality for the class $ \mathcal{B} $ of analytic self maps defined on the unit disk $ \mathbb{D} $. More precisely, we generalize and improve several Bohr-type inequalities by combining appropriate improved and refined versions of the classical Bohr's inequality with some methods concerning the area measure of bounded analytic functions in $ \mathcal{B} $. In addition, we obtain Bohr-type and Bohr-Rogosinski-type inequalities for the subordination class and also for the class of $ K $-quasiconformal harmonic mappings. All the results are proved to be sharp.
\end{abstract}

\maketitle
\pagestyle{myheadings}
\markboth{M. B. Ahamed and S. Ahammed}{Bohr-type inequalities for classes of analytic maps and K-quasiconformal harmonic mappings}

\section{Introduction}
Bohr's classical theorem, examined a hundred years ago, has given rise to the Bohr's phenomenon, leading to intense research activity these days. Bohr's seminal work on the power series in complex analysis in the year $ 1914 $ \cite{Bohr-1914} has sparked a flurry of study in complex analysis and related fields. This work is popularly referred to as Bohr’s phenomenon. A number of fascinating breakthroughs have been made in this field in recent years. The generalization of Bohr’s theorem is now an active area of research: Aizenberg \textit{et al.} \cite{Aizenberg-Aytuna-Djakov-2001}, and Aytuna and Djakov \cite{Ayt & Dja & BLMS & 2013} studied the Bohr's property of bases for holomorphic functions; Ali \textit{et al.} \cite{Ali & Abdul & Ng & CVEE & 2016} found the Bohr radius for the class of starlike logharmonic mappings; while Paulsen and Singh \cite{Paulsen-PLMS-2002} extended the Bohr's inequality to Banach algebras. The main goal of this paper is to look into Bohr's phenomenon in the context of bounded analytical functions defined on the unit disk $ \mathbb{D} $, as well as K-quasiconformal harmonic mappings.
\subsection{Classical Bohr's inequality and different aspects of it}
Let $ \mathcal{A} $ denote the set of all analytic functions of the form $ f(z)=\sum_{n=0}^{\infty}a_nz^n $ defined on $ \mathbb{D}:=\{z\in\mathbb{C}:|z|<1\} $ and we define the class $ \mathcal{B}:=\{f\in\mathcal{A} : |f(z)|\leq 1\; \mbox{in}\; \mathbb{D}\} $.  Let us start with a remarkable result of Bohr's \cite{Bohr-1914}, published in $ 1914 $, dealing with a problem connected with Dirichlet series and number theory, which stimulated a lot of research activity into geometric function theory.
\begin{thm}\cite{Bohr-1914}\label{th-1.1}
If $ f(z)=\sum_{n=0}^{\infty}a_nz^n\in\mathcal{B} $, then 
\begin{equation}\label{e-1.2}
M_f(r):=\sum_{n=0}^{\infty}|a_n|r^n\leq 1 \;\; \mbox{for}\;\; |z|=r\leq\frac{1}{3}.
\end{equation}
\end{thm}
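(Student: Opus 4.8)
The plan is to run the classical two-step argument: first a sharp bound on the Taylor coefficients of a member of $\mathcal{B}$, then an elementary optimization over $|a_0|$. The crucial input is that $|a_0|\le 1$ and
\[
|a_n|\le 1-|a_0|^2\qquad(n\ge 1).
\]
I would obtain this by a symmetrization trick. Fix $n\ge 1$ and set $\zeta=e^{2\pi i/n}$; then $\tfrac1n\sum_{j=0}^{n-1}f(\zeta^j z)=\sum_{k\ge 0}a_{nk}z^{nk}$, so there is an analytic function $\phi_n$ on $\mathbb{D}$ with $\phi_n(z^n)=\tfrac1n\sum_{j=0}^{n-1}f(\zeta^j z)$, namely $\phi_n(w)=\sum_{k\ge 0}a_{nk}w^k$ (it converges on $\mathbb{D}$ since $|a_m|\le 1$). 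Because the $n$-th power map is onto $\mathbb{D}$, we get $|\phi_n|\le 1$ on $\mathbb{D}$, so $\phi_n\in\mathcal{B}$; moreover $\phi_n(0)=a_0$ and $\phi_n'(0)=a_n$. The Schwarz--Pick lemma evaluated at the origin gives $|\phi_n'(0)|\le 1-|\phi_n(0)|^2$, i.e. $|a_n|\le 1-|a_0|^2$.

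Next I would feed this into the geometric series. Writing $a:=|a_0|\in[0,1]$, for $0\le r<1$,
\[
M_f(r)=\sum_{n=0}^{\infty}|a_n|r^n\le a+(1-a^2)\sum_{n=1}^{\infty}r^n=a+(1-a^2)\,\frac{r}{1-r},
\]
so it suffices to check that the last expression is $\le 1$ when $r\le\tfrac13$. If $a=1$, the maximum principle forces $f$ to be the unimodular constant $a_0$, whence $M_f(r)=1$; if $a<1$, dividing the inequality $a+(1-a^2)\tfrac{r}{1-r}\le 1$ by $1-a>0$ reduces it to $(1+a)\tfrac{r}{1-r}\le 1$, that is, to $r\le\tfrac{1}{2+a}$. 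Since $a\le 1$ we have $\tfrac{1}{2+a}\ge\tfrac13$, so $r\le\tfrac13$ works for every admissible value of $a$, which proves \eqref{e-1.2}.

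The only non-routine ingredient is the coefficient estimate $|a_n|\le 1-|a_0|^2$; everything afterward is a one-variable inequality, so that is where I expect the real work to be. (Alternatively this estimate follows from Schur's bounds for coefficients of bounded power series, or from the Schwarz lemma applied to the automorphism $z\mapsto\frac{f(z)-a_0}{1-\overline{a_0}f(z)}$, but the averaging argument above keeps the proof self-contained and uses only the Schwarz--Pick lemma already advertised in the paper.) Finally, the constant $\tfrac13$ is best possible: for $f_a(z)=\frac{a-z}{1-az}$ with $0<a<1$ one computes $M_{f_a}(r)=a+(1-a^2)\frac{r}{1-ar}$, which exceeds $1$ exactly when $r>\frac{1}{1+2a}$, and $\frac{1}{1+2a}\to\tfrac13$ from above as $a\to 1^-$, so no radius larger than $\tfrac13$ is admissible.
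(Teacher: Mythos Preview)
Your proof is correct. The paper, however, does not supply its own proof of this statement: Theorem~\ref{th-1.1} is quoted as Bohr's classical result with the remark that the sharp constant $1/3$ is due to M.~Riesz, I.~Schur and F.~Wiener, and that several alternative proofs exist in the literature (the paper cites \cite{Sidon-1927,Paulsen-PLMS-2002,Tomic-1962}). So there is nothing in the paper to compare your argument against line by line.

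That said, the ingredients you use are precisely the ones the paper takes for granted elsewhere. The coefficient estimate $|a_n|\le 1-|a_0|^2$ that you derive via the averaging/symmetrization trick is invoked repeatedly in the paper's later proofs (e.g.\ in the proof of Theorem~\ref{thh-5.3}, where it is stated without justification for bounded $h$), and the extremal family $f_a(z)=(a-z)/(1-az)$ you use for sharpness is exactly the function \eqref{e-5.1} used throughout Section~2. Your argument is one of the standard modern proofs of Bohr's theorem; the alternative route via the Schwarz lemma applied to $(f-a_0)/(1-\overline{a_0}f)$ that you mention would equally serve, and is closer in spirit to how the paper handles the Schwarz--Pick estimate in the proof of Theorem~\ref{thh-5.3}.
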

The inequality fails when $ r>{1}/{3} $ in the sense that there are functions in $ \mathcal{B} $ for which the inequality is reversed when $ r>{1}/{3} $.  Bohr initially shows the inequality \eqref{e-1.2} for $|z|\leq1/6$. Subsequently, the inequality \eqref{e-1.2}  improved  for $|z|\leq1/3$ by M. Riesz, I. Schur and F. Wiener and showed that the constant $1/3$ is best possible. It is quite natural that the constant $1/3$ and the inequality \eqref{e-1.2} are called respectively, the Bohr radius and the Bohr's inequality for the class $\mathcal{B}$.  Several other proofs of this interesting inequality were given in different articles  (see \cite{Sidon-1927,Paulsen-PLMS-2002,Tomic-1962}).\vspace{1.2mm}

In the majorant series $M_f(r)$, it is worth pointing out that the beginning terms play crucial role to study the Bohr-type inequalities. For instance, Tomic \cite{Tomic-1962} has proved  that if $|a_0|=0$, then the inequality \eqref{e-1.2}  for $r \leq {1}/{2}$ and if the term $|a_0|$ is replaced by  $|a_0|^2$, then the constant $1/3$ could be replaced by $1/2$. In addition, if  $|a_0|$ is replaced by $|f(z)|$, then the constant $1/3$ could be replaced by $\sqrt{5}-2$ which is best possible (see e.g. \cite{Ponnusamy-2017,Kayu-Kham-Ponnu-2021-JMAA}). On other hand Liu \emph{et al.} \cite{Liu-Shang-Xu-JIA-2018} have showed that if the term $ |a_0|+|a_1||z| $ is replaced by $ |f(z)|+|f^{\prime}(z)||z| $ in the majorant series, then the constant $ 1/3 $ could be replaced by the sharp constant $ (\sqrt{17}-3)/4 $. The majorant series belongs to very important class of series with non-negative terms. Another category of series, known as the alternating series, includes the form $A_f(z):=\sum_{k=0}^{\infty}(-1)^k|a_n|r^k$. In $ 2017 $, Ali \emph{et al.} \cite{Ali-2017} established the Bohr’s phenomenon for the classes of even and odd analytic functions and for alternating series showing that $ |A_f(z)|\leq 1 $ on the disk $ \mathbb{D}_{1/\sqrt{3}} $ and the radius $ 1/\sqrt{3} $ is best possible. Recently, Ponnusamy \textit{et el.} \cite{Pon-Shm-Star-JMAA-2024} have estimated for the Bohr radius  in some classes of analytic functions in $ \mathbb{D} $, associated with linearly invariant families of finite order. \vspace{1.2mm} 

We now define what is Bohr inequality for an arbitrary class of functions. 
\begin{defn}
	A class $ \mathcal{F} $ consisting of analytic functions $ f(z)=\sum_{n=0}^{\infty}a_nz^n $ in the unit disk $ \mathbb{D} $ is said to satisfy Bohr's phenomenon if there exists  $ r_f>0 $ such that 
	\begin{align}\label{e-1.1}
		\sum_{n=0}^{\infty}|a_n|r^n\leq 1 \;\; \mbox{for}\;\; |z|=r\leq r_f.
	\end{align}
	The largest radius $ r_f $ is called the Bohr radius and the inequality \eqref{e-1.1} is known as the Bohr's inequality for the class $ \mathcal{F} $.
\end{defn}
 The real burst of activity on Bohr's inequality actually happened in $ 1990 $s after the publication of the articles of Boas and Khavinson \cite{Boas-1997} establishing Bohr's power series theorem in several complex variables. Furthermore, Dixon \cite{Dixon & BLMS & 1995} applied Bohr's classical theorem to construct a Banach algebra that is not classified as an operator algebra but still satisfies the non-unital von Neumann inequality. Subsequently, Paulson and Singh \cite{Paulsen-PAMS-2004}, Blasco \cite{Blasco-2010}, and Bhowmik and Das \cite{Bhowmik-Das-AM-2021} have extended the Bohr's inequality in the context of Banach algebra. In fact, after the Bohr's inequality was generalized from analytic functions in $ \mathbb{C} $ to holomorphic functions in $ \mathbb{C}^n $ by Boas and Khavinson, a variety of results on Bohr’s theorem in higher dimensions appeared in the last three decades. In this context and in other aspects, we refer to \cite{Aizn-PAMS-2000,Allu-CMB-2022,Alkhaleefah-PAMS-2019,Bayart-2014,Bhowmik-2018,Boas-1997,Defant-2011,Lata-Singh-PAMS-2022,Liu-Ponnusamy-PAMS-2021} and the references therein. Various results in Bohr's phenomenon are established (see e.g. \cite{Ahamed-AASFM-2022,Allu-BSM-2021,Beneteau-2004}), and refined versions of Bohr's inequality are proved (see e.g. \cite{Liu-Liu-Ponnusamy-2021}). An open problem raised in \cite{Djakov-JA-20000} has been settled in \cite{Kay & Pon & AASFM & 2019}. In \cite{Ponnusamy-RM-2020,Ponnusamy-HJM-2021,Ponnusamy-JMAA-2022}, the authors have investigated Bohr's inequality for a certain class of power series  replacing $ r^n $ by $ \phi_n(r) ,$  where $ \phi_n(r) $ is a non-negative continuous function on $ [0,1) $ such that $ \sum_{n=0}^{\infty}\phi_n(r) $ converges locally uniformly with respect to $ r\in [0, 1) $. Recently, Ahamed \emph{et al.} \cite{Ahamed-AASFM-2022}, and Evdoridis \emph{et al.} \cite{Ponnusamy-RM-2021} have studied the Bohr's phenomenon for analytic functions on shifted disks, whereas Allu and Halder \cite{Allu-JMAA-2021} have established Bohr-type inequalities for certain classes of starlike and convex univalent functions in $ \mathbb{D}$. In the past few years, Bohr’s inequality has become the subject of extensive investigation and the inequality has been extended in numerous directions and settings, the interested reader can refer to (\cite{Aha-Aha-MJM-2023}, \cite{Boas-1997,Boas-2000}, \cite{Aizn-PAMS-2000}-\cite{Aizeberg-PLMS-2001}, \cite{Kumar-PAMS-2023} and \cite{Paulsen-PLMS-2002}-\cite{Paulsen-BLMS-2006}). \vspace{1.2mm}
  
  The Bohr inequality was studied by Kayumov and Ponnusamy \cite{Kayumov-CMFT-2017} in the context of two classes of analytic functions: those that are subordinate to univalent functions and those that are subordinate to odd univalent functions. Liu \cite{Liu-JMAA-2021} subsequently investigated the Bohr-type inequalities for various subordination classes, using the Schwarz function $\omega(z)$ in the place of $z$ as in the classical Bohr's inequality. To gain insight into the recent developments of Bohr's inequality, we suggest consulting \cite{Ahamed-RMJM-2021,Allu-Halder-Banach,Allu-IM-2021,Das-JMAA-2022,Huang-Liu-Ponnu-CVEE-2021,Kumar-Sahoo-MJM-2021}.
\subsection{The Bohr and Rogosinski inequalities for bounded analytic functions}
Similar to Bohr's radius, there is also the concept of Rogosinski radius, however, this is less known than Bohr radius. According to Rogosinski, the radius is as follows: 
\begin{defn} (Rogosinski radius)
	Let $f(z)=\sum_{n=0}^{\infty} a_{n}z^{n}$ be analytic in $\mathbb{D}$ and the corresponding partial sum of $f$ be defined by $S_{N}(z):=\sum_{n=0}^{N-1} a_{n}z^{n}$. Then, for every $N \in\mathbb{N}$, 
	\begin{align*}
		|S_{N}(z)|\leq1\;\; \mbox{for}\;\; |z|\leq \frac{1}{2}.
	\end{align*} The radius $1/2$ is sharp and is known as the Rogosinski radius.
\end{defn}
 The Rogosinski radius served as the foundation for the Bohr-Rogosinski sum $R_{N}^{f}(z)$, as established by Kayumov and Ponnusamy \cite{Ponnusamy-2017}
\begin{equation}
	R_{N}^{f}(z):=|f(z)|+ \sum_{n=N}^{\infty} |a_{n}||z|^{n}.
\end{equation}
Here, the Bohr-Rogosinski radius is the largest number $ r_N>0 $ such that  $ R_{N}^{f}(z)\leq 1 $ for $ |z|\leq  r_N $. It is worth mentioning that $|S_{N}(z)|=\big|f(z)-\sum_{n=N}^{\infty} a_{n}z^{n}\big| \leq |R_{N}^{f}(z)|$. In this context, it is pertinent to emphasize that for $ N=1 $, similarly, $R_N^f(z)$, which is related to the classical Bohr's sum (i.e. majorant series) when $|a_0|=|f(0)|$ is replaced by $|f(z)|$, gives Rogosinski radius in the case of bounded analytic functions in $\mathbb{D}$. In relation to the notions of Rogosinski's inequality and Rogosinski's radius explored in \cite{Rogosinski-1923,Schur-1925}, Kayumov and Ponnusamy \cite{Ponnusamy-2017} (see also \cite{Kayu-Kham-Ponnu-2021-JMAA}) established the following result regarding the Bohr–Rogosinski radius for analytic functions in $ \mathcal{B} $.
\begin{thm}\cite{Ponnusamy-2017}\label{th-1.5}
	Suppose that  $  f(z)=\sum_{n=0}^{\infty}a_nz^n \in\mathcal{B} $ . Then 
\begin{equation*}
	 |f(z)|+\sum_{n=N}^{\infty}|a_n|r^n\leq 1\;\;\mbox{for}\;\; r\leq R_N,
\end{equation*}
where $ R_N $ is the positive root of the equation $ 2(1+r)r^N-(1-r)^2=0 $. The radius $ R_N $ is the best possible. Moreover, 
\begin{equation*}
	|f(z)|^2+\sum_{n=N}^{\infty}|a_n|r^n\leq 1\;\;\mbox{for}\;\; r\leq R^{\prime}_N,
\end{equation*}
where $ R^{\prime}_N $ is the positive root of the equation $ (1+r)r^N-(1-r)^2=0 $. The radius $ R^{\prime}_N $ is the best possible.
\end{thm}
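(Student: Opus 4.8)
The plan is to combine two classical facts about a given $f\in\mathcal{B}$, writing $a:=|f(0)|\in[0,1]$ and $r:=|z|$: the Schwarz--Pick estimate $|f(z)|\le(a+r)/(1+ar)$, and the coefficient bound $|a_n|\le 1-a^2$ for every $n\ge1$. The latter follows since $\frac{1}{n}\sum_{k=0}^{n-1}f(e^{2\pi ik/n}z)=G(z^n)$ for some $G\in\mathcal{B}$ with $G(0)=a_0$, $G'(0)=a_n$, and $|G'(0)|\le1-|G(0)|^2$. Summing a geometric series then yields, for $r<1$,
\[
|f(z)|+\sum_{n=N}^{\infty}|a_n|r^n\le\frac{a+r}{1+ar}+(1-a^2)\frac{r^N}{1-r}=:\Phi_N(a,r),
\]
so it suffices to show $\Phi_N(a,r)\le 1$ for all $a\in[0,1]$ whenever $r\le R_N$.

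For the first inequality I would use $\dfrac{a+r}{1+ar}-1=-\dfrac{(1-a)(1-r)}{1+ar}$ together with $1-a^2=(1-a)(1+a)$ to factor
\[
\Phi_N(a,r)-1=(1-a)\left[(1+a)\frac{r^N}{1-r}-\frac{1-r}{1+ar}\right].
\]
Since $1-a\ge0$, this reduces to proving $(1+a)(1+ar)\,r^N\le(1-r)^2$ for all $a\in[0,1]$. The left side equals $\bigl(1+a(1+r)+a^2r\bigr)r^N$, which is increasing in $a$ on $[0,1]$, so it is largest at $a=1$, giving the sufficient condition $2(1+r)r^N\le(1-r)^2$. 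For $g_N(r):=2(1+r)r^N-(1-r)^2$ one has $g_N(0)=-1<0$, $g_N(1)=4>0$ and $g_N'(r)=2r^N+2N(1+r)r^{N-1}+2(1-r)>0$ on $(0,1)$; hence $g_N$ has a unique zero $R_N\in(0,1)$ and $g_N(r)\le0$ exactly for $r\le R_N$, which settles the first part.

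For the second inequality I would instead estimate $|f(z)|^2\le\bigl((a+r)/(1+ar)\bigr)^2$ and use $\bigl(\tfrac{a+r}{1+ar}\bigr)^2-1=-\dfrac{(1-a^2)(1-r^2)}{(1+ar)^2}$ to get
\[
|f(z)|^2+\sum_{n=N}^{\infty}|a_n|r^n-1\le(1-a^2)\left[\frac{r^N}{1-r}-\frac{1-r^2}{(1+ar)^2}\right],
\]
which is $\le0$ once $r^N(1+ar)^2\le(1-r)^2(1+r)$. As $(1+ar)^2$ is increasing in $a$, the extreme case $a=1$ yields $(1+r)r^N\le(1-r)^2$; since $h_N(r):=(1+r)r^N-(1-r)^2$ satisfies $h_N(0)=-1<0$, $h_N(1)=2>0$ and $h_N'(r)>0$ on $(0,1)$, it has a unique zero $R_N'\in(0,1)$ with $h_N(r)\le0$ for $r\le R_N'$, and the second part follows.

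For sharpness I would test the disk automorphisms $\phi_a(z):=\dfrac{a-z}{1-az}\in\mathcal{B}$ ($0\le a<1$), which have $a_0=a$ and $a_n=-(1-a^2)a^{\,n-1}$ for $n\ge1$. At $z=-r$, where $|\phi_a(-r)|=(a+r)/(1+ar)$, the same factoring gives
\[
|\phi_a(-r)|+\sum_{n=N}^{\infty}|a_n|r^n-1=(1-a)\left[(1+a)\frac{a^{\,N-1}r^N}{1-ar}-\frac{1-r}{1+ar}\right],
\]
and with $|\phi_a(-r)|^2$ in front the bracket is $\dfrac{a^{\,N-1}r^N}{1-ar}-\dfrac{1-r^2}{(1+ar)^2}$. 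Letting $a\to1^-$ these brackets tend to $g_N(r)/(1-r^2)$ and $h_N(r)/(1-r^2)$, which are positive for $r>R_N$ (resp.\ $r>R_N'$); as the prefactor $1-a$ (resp.\ $1-a^2$) is positive for $a<1$, the left-hand sides exceed $1$ for such $a$ near $1$, so neither radius can be increased. The main obstacle is not any individual step but making the reduction tight: one must verify that replacing $|f(z)|$ by its Schwarz--Pick bound and each $|a_n|$ by $1-a^2$ is \emph{jointly} attained in the limit $a\to1$ along the $\phi_a$, and that $g_N$, $h_N$ each have precisely one zero in $(0,1)$, so that the displayed equations determine $R_N$, $R_N'$ unambiguously and the limiting value is exactly $1$.
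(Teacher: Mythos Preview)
Your proposal is correct and follows precisely the standard route: Schwarz--Pick for $|f(z)|$, the Wiener bound $|a_n|\le1-|a_0|^2$, reduction to the worst case $a=1$ via monotonicity, and sharpness through the automorphisms $\phi_a$ at $z=-r$ with $a\to1^-$. This is exactly the argument of Kayumov--Ponnusamy \cite{Ponnusamy-2017} from which the theorem is quoted (the present paper does not reprove it), and the same ingredients appear verbatim in the paper's proof of Theorem~\ref{thh-5.3}.
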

A number of improvements and extensions have been made to Theorem \ref{th-1.5} specifically for functions in the class $\mathcal{B}$ (see e.g. \cite{Aizenberg-AMP-2012,Das-JMAA-2022,Ismagilov-2020-JMAA,Liu-Liu-Ponnusamy-2021} and references therein).  
\section{Improved and refined versions of classical Bohr's inequality for the class $ \mathcal{B} $}
 Let $f$ be holomorphic in $\mathbb D$, and for $0<r<1$,  let $\mathbb D_r:=\{z\in \mathbb C: |z|<r\}$.
Throughout the paper,  $S_r:=S_r(f)$ denotes the planar integral

$$S_r=\int_{\mathbb D_r} |f'(w)|^2 d A(w).$$

Note that if  $f(z)=\sum_{n=0}^\infty a_nz^n$, than $$S_r=\pi \sum_{n=1}^\infty n|a_n|^2 r^{2n}.$$  If $f$ is a univalent function, then $S_r$ is the area of  $f(\mathbb D_r)$. 
\vspace{1.2mm}
Kayumov \emph{et al.} \cite{Kayumov-CRACAD-2018} proved  the following sharp inequality  for functions in the class 
$ \mathcal{B}$,
\begin{align}\label{e-1.55}
	\frac{S_r}{\pi}=\sum_{n=1}^{\infty}n|a_n|^2r^{2n}\leq r^2\frac{(1-|a_0|^2)^2}{(1-|a_0|^2r^2)^2} \,\,\mbox{for}\,\, 0<r\leq1/\sqrt{2}.
\end{align}
\vspace{1.2mm}
\par In recent times, the quantity $S_r$ has been widely used as a tool for investigating the Bohr's phenomenon in the setting of analytic functions (see e.g. \cite{Ahamed-AASFM-2022,Huang-Liu-Ponnu-CVEE-2021}), harmonic mappings (see e.g. \cite{Ahamed-AMP-2021,Ahamed-CVEE-2021,Ahamed-RMJM-2021}) and also for operator valued functions in the study of the multidimensional Bohr's inequality (see \cite{Allu-CMB-2022}). In 2018, Kayumov \emph{et. al.} \cite{Kayumov-CRACAD-2018} proved the following improved version of the Bohr's inequality in terms of $ S_r $.
\begin{thm}\cite{Kayumov-CRACAD-2018}\label{th-1.12}
Suppose that  $  f(z)=\sum_{n=0}^{\infty}a_nz^n \in\mathcal{B}. $  Then 
	\begin{equation*}
		\sum_{n=0}^{\infty}|a_n|r^n+ \frac{16}{9} \left(\frac{S_{r}}{\pi}\right) \leq 1 \quad \mbox{for} \quad r \leq \frac{1}{3}
	\end{equation*}
	and the numbers $1/3$, $16/9$ cannot be improved. Moreover, 
	\begin{equation*}
		|a_{0}|^{2}+\sum_{n=1}^{\infty}|a_n|r^n+ \frac{9}{8} \left(\frac{S_{r}}{\pi}\right) \leq 1 \quad \mbox{for} \quad r \leq \frac{1}{2}
	\end{equation*}
	and the numbers $1/2$, $9/8$ cannot be improved.
\end{thm}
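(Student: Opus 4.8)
The plan is to reduce each inequality to a one‑variable estimate in $a:=|a_0|\in[0,1]$. First I would note that replacing $f$ by $e^{-i\arg a_0}f$ leaves each of $\sum|a_n|r^n$, $S_r$ and $|a_0|$ unchanged, so one may assume $a_0=a\ge0$. The engine is the sharp tail bound
\begin{equation*}
\sum_{n=1}^{\infty}|a_n|r^n\le\frac{(1-a^2)r}{1-ar}\qquad(0<r\le\tfrac13),
\end{equation*}
with equality for $f(z)=(a+z)/(1+az)$. To prove it: by the Schwarz–Pick lemma $(f-a)/(1-af)$ is a self‑map of $\mathbb{D}$ vanishing at $0$, hence $f=a+(1-a^2)\dfrac{zg(z)}{1+a\,zg(z)}$ for some $g\in\mathcal{B}$; expanding $\dfrac{zg}{1+azg}=\sum_{k\ge1}(-a)^{k-1}z^{k}g(z)^{k}$ and using $g^{k}\in\mathcal{B}$, the classical Bohr inequality (Theorem~\ref{th-1.1}) applied to each $g^{k}$ gives $\sum_{n\ge k}\big|[z^{n}](z^{k}g^{k})\big|\,r^{n}\le r^{k}$ for $r\le1/3$ (here $[z^n]F$ is the $n$th Taylor coefficient of $F$); summing these non‑negative contributions over $k$ yields the bound. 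It fails for $r>1/3$, as $f(z)=z(z+b)/(1+bz)$ with $b\to1$ shows, which is why this route is tied to the radius $1/3$.

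For the first inequality I would combine this with the area estimate~\eqref{e-1.55} (valid for $r\le1/\sqrt2$) to obtain, for $r\le1/3$,
\begin{equation*}
\sum_{n=0}^{\infty}|a_n|r^n+\frac{16}{9}\Bigl(\frac{S_r}{\pi}\Bigr)\le a+\frac{(1-a^2)r}{1-ar}+\frac{16}{9}\cdot\frac{r^{2}(1-a^2)^{2}}{(1-a^2r^2)^{2}}=:\Psi(a,r).
\end{equation*}
Both non‑constant summands are increasing in $r$, so it suffices to check $\Psi(a,\tfrac13)\le1$; a short computation turns this into $8(1+a)^{2}\le(3-a)(3+a)^{2}$, equivalently $19-7a-11a^{2}-a^{3}\ge0$, and the cubic on the left is decreasing on $[0,1]$ with value $0$ at $a=1$, so the inequality holds throughout $[0,1]$.

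For the second inequality, the range $r\le1/3$ is already covered by the first (since $|a_0|^{2}\le|a_0|$ and $\tfrac98\le\tfrac{16}{9}$), so only $1/3<r\le1/2$ remains — and there the tail bound above is unavailable. This is where the main obstacle lies. The plan here is a joint estimate: bound $|a_0|^{2}+\sum_{n\ge1}|a_n|r^n$ by $a^{2}+(1-a^2)\tfrac{r}{1-r}$ via $|a_n|\le1-a^2$, and then absorb $\tfrac98(S_r/\pi)$ — estimated through~\eqref{e-1.55} — into the leftover slack $\sum_{n\ge1}\bigl((1-a^2)-|a_n|\bigr)r^n$, now using the refined Schwarz–Pick coefficient inequalities (such as $|a_2|\le(1-a^2)-|a_1|^{2}/(1+a)$ and their higher‑order analogues). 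The genuinely delicate regime is $a\to1$, where $\sum_{n\ge1}|a_n|r^n$ and $S_r/\pi$ are both $O(1-a^2)$ and the inequality is asymptotically an equality, so the crude bound $|a_n|\le1-a^2$ must be improved along the entire tail and controlled uniformly; this is the hard part. Granting it, the resulting one‑variable inequality is saturated at $r=1/2$ and, being monotone in $r$, covers $1/3<r\le1/2$.

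Finally, for sharpness I would test $f_a(z)=(a+z)/(1+az)$, for which $|a_n|=(1-a^2)a^{n-1}$ and both displayed estimates are equalities. Substituting $f_a$ into the left side of the first inequality at $r=1/3$ with a constant $c$ in place of $16/9$ gives $1-\dfrac{(1-a)^{2}}{3-a}\Bigl(2-\dfrac{9c(1+a)^{2}}{(3-a)(3+a)^{2}}\Bigr)$, whose bracket tends to $2-\tfrac{9c}{8}<0$ as $a\to1$ once $c>16/9$, so the inequality fails; the analogous computation at $r=1/2$ shows $9/8$ cannot be improved. The radii $1/3$ and $1/2$ are already optimal for the classical Bohr and $|a_0|^{2}$‑Bohr inequalities (the case $c=0$), hence cannot be enlarged.
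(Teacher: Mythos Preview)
The paper only cites this theorem from \cite{Kayumov-CRACAD-2018} and does not reprove it, but the tools needed --- the tail bound of Lemma~\ref{lem-3.4} and the area bound~\eqref{e-1.55} (Lemma~\ref{lem-3.2}) --- are recorded there, so I compare against the Kayumov--Ponnusamy argument.

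Your proof of the first inequality is correct. The iteration of the classical Bohr inequality through the Schwarz--Pick decomposition $f=a+(1-a^2)\sum_{k\ge1}(-a)^{k-1}z^kg^k$ is an elegant way to obtain $\sum_{n\ge1}|a_n|r^n\le\frac{(1-a^2)r}{1-ar}$ for \emph{every} $a\in[0,1)$ when $r\le1/3$; the original proof reaches the same bound via Lemma~\ref{lem-3.4}, but only under the hypothesis $|a_0|\ge r$, and therefore has to treat the range $|a_0|<1/3$ separately with the companion bound $N(r)$. Your subsequent algebra ($19-7a-11a^2-a^3\ge0$) and the sharpness computations at $f_a$ are fine.

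The second inequality, however, is not proved. You correctly note that your Bohr-iteration tail bound is tied to $r\le1/3$, but your remedy for $1/3<r\le1/2$ --- start from the crude $|a_n|\le1-a^2$ and then reclaim the overshoot through refined Schwarz--Pick coefficient inequalities --- is left as a plan (``Granting it, \dots''). There is real trouble: at $r=1/2$ the crude bound already yields $a^2+(1-a^2)\cdot1+\tfrac{9}{8}\,S_{1/2}/\pi=1+\tfrac{9}{8}\,S_{1/2}/\pi>1$ whenever $S_{1/2}>0$, so there is no slack whatsoever to absorb the area term unless the tail estimate is simultaneously sharpened along the whole series, which you do not carry out.

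The missing ingredient is exactly Lemma~\ref{lem-3.4} in its full strength: the bound $\sum_{n\ge1}|a_n|r^n\le\frac{(1-a^2)r}{1-ar}$ holds for \emph{all} $r\le a$, not merely for $r\le1/3$, and for $a<r$ one has the companion bound $r\sqrt{(1-a^2)/(1-r^2)}$. Both branches and the area bound~\eqref{e-1.55} are increasing in $r$, so it suffices to check $r=1/2$ and split on $a$. For $a\ge1/2$ the same algebra you used in the first part reduces the claim to $9(1+a)\le2(2-a)(2+a)^2$, i.e.\ $7-a-4a^2-2a^3\ge0$, which is decreasing on $[0,1]$ with value $0$ at $a=1$. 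For $a<1/2$ one checks directly that $a^2+\sqrt{(1-a^2)/3}+\tfrac{9}{2}(1-a^2)^2/(4-a^2)^2<1$ on $[0,1/2]$ (the maximum is about $0.93$). This case split is how the result is actually obtained in \cite{Kayumov-CRACAD-2018}.
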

In 2020, Ismagilov \emph{et al.} \cite{Ismagilov-2020-JMAA} further investigated Theorem \ref{th-1.12} and proved the following sharp inequalities by incorporating the non-linearity of the quantity $ S_r/\pi $.
\begin{thm}\cite{Ismagilov-2020-JMAA} \label{th-1.8}
	Suppose that $ f(z)=\sum_{n=0}^{\infty}a_nz^n\in\mathcal{B} $. Then
	\begin{equation*}
		\sum_{n=0}^{\infty}|a_n|r^n+\frac{16}{9}\left(\frac{S_r}{\pi}\right)+\lambda\left(\frac{S_r}{\pi}\right)^2\leq 1\;\; \mbox{for}\;\; r\leq\frac{1}{3},
	\end{equation*}
where
\begin{equation*}
	\lambda=\frac{4(486-261a-324a^2+2a^3+30a^4+3a^5)}{81(1+a)^3(3-5a)}=18.6095...
\end{equation*}
and $ a\approx 0.567284 $ is the unique root of the equation $-405+473t+402t^2+38t^3+3t^4+t^5=0 $ in the interval $ (0,1) $.
The equality is achieved for the function $ f_a(z):=(a-z)/(1-az) $.
\end{thm}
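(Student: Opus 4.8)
The plan is to follow the two-stage strategy already established for Theorem~\ref{th-1.12}: first reduce to the extremal two-dimensional problem using the Schwarz--Pick inequality and the sharp area estimate \eqref{e-1.55}, then solve the resulting one-parameter optimization exactly. Write $a:=|a_0|$ and set $S:=S_r/\pi=\sum_{n=1}^\infty n|a_n|^2r^{2n}$. By the classical coefficient bound $\sum_{n=1}^\infty|a_n|r^n\le r(1-a^2)/(1-ar)$ (which follows from Schwarz--Pick applied to $f$, as in Bohr's original argument) together with the sharp estimate \eqref{e-1.55}, valid since $r\le 1/3<1/\sqrt2$, namely $S\le r^2(1-a^2)^2/(1-a^2r^2)^2$, it suffices to show that
\begin{equation*}
a+\frac{r(1-a^2)}{1-ar}+\frac{16}{9}S+\lambda S^2\le 1
\end{equation*}
for all $a\in[0,1)$ and all admissible $S$ in the range $0\le S\le r^2(1-a^2)^2/(1-a^2r^2)^2$. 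Since the left-hand side is increasing in $S$, the worst case is $S=r^2(1-a^2)^2/(1-a^2r^2)^2$; and then monotonicity considerations (the standard fact that the remaining expression is increasing in $r$ on $[0,1/3]$ once $S$ is at its maximum) reduce everything to $r=1/3$.

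So the crux is the inequality at $r=1/3$: substituting $r=1/3$ and $S=\frac{(1-a^2)^2}{9(1-a^2/9)^2}=\frac{(1-a^2)^2}{(9-a^2)^2}\cdot 9$... more precisely $S=\frac{(1-a^2)^2}{(3-a^2/3)^2}\cdot\frac19$; I will keep $t:=a$ and define
\begin{equation*}
\Phi(t):=t+\frac{(1-t^2)/3}{1-t/3}+\frac{16}{9}\,S(t)+\lambda\,S(t)^2-1,\qquad S(t)=\frac{(1-t^2)^2}{(3-t^2)^2},
\end{equation*}
after clearing the factor $r^2=1/9$ in $S_r/\pi$ and simplifying. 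The claim is $\Phi(t)\le 0$ on $[0,1)$ with equality exactly at $t=a_0:=0.567284\ldots$. The first step is to check $\Phi(1^-)=0$ and $\Phi(0)<0$; then one differentiates. Clearing denominators, $\Phi(t)=0$ becomes a polynomial equation, and the assertion is that this polynomial has a double root at $t=a_0$ and that $a_0$ is the unique root in $(0,1)$ of the stated quintic $-405+473t+402t^2+38t^3+3t^4+t^5=0$. The value of $\lambda$ is then forced: it is exactly the number that makes $t=a_0$ a critical point of $\Phi$, i.e. $\lambda=-\frac{(9/16)\cdot\text{(derivative of the linear+rational part)}+(16/9)\cdot S'}{2S\,S'}$ evaluated at $t=a_0$, which after simplification yields the displayed rational expression $\lambda=\frac{4(486-261a-324a^2+2a^3+30a^4+3a^5)}{81(1+a)^3(3-5a)}$.

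The main obstacle will be the final polynomial sign analysis: after clearing denominators $\Phi(t)\le 0$ is equivalent to $P(t)\ge 0$ (or $\le 0$) for an explicit polynomial of degree roughly $10$--$12$ in $t$ with the parameter $\lambda$ substituted, and one must verify that $P$ has a double zero at $a_0$ and no other zeros in $(0,1)$ — i.e. that $P(t)/((t-a_0)^2\cdot(\text{positive factor}))$ is a polynomial with no roots in $(0,1)$. This is a finite but delicate computation; I expect it to go through by factoring out $(3-t^2)^{-4}$ and $(3-t)^{-1}$, reducing to a polynomial inequality that can be certified either by exhibiting an explicit sum-of-squares-type decomposition or by a Sturm-sequence count on $[0,1]$. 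Sharpness is immediate: for $f_a(z)=(a-z)/(1-az)$ one has $|a_n|=(1-a^2)a^{n-1}$, so $\sum|a_n|r^n$, $S_r/\pi$, and hence the whole left-hand side attain the bounds used above with equality at $r=1/3$, $a=a_0$, giving $\Phi(a_0)=0$.
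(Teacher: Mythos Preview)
First, note that the paper does not itself prove Theorem~\ref{th-1.8}; it is quoted from \cite{Ismagilov-2020-JMAA}. So there is no proof in this paper to compare against directly. Your overall strategy (reduce to $r=1/3$, bound $\sum|a_n|r^n$ and $S_r/\pi$ in terms of $a=|a_0|$, then optimize a one-variable function and choose $\lambda$ so that the maximum is a double zero) is exactly the method the paper uses for its own analogous results, e.g.\ Theorems~\ref{th-2.2}, \ref{th-2.18}, \ref{th-2.21}. So the architecture is fine.

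There are, however, two concrete gaps. First, the bound $\sum_{n\ge1}|a_n|r^n\le r(1-a^2)/(1-ar)$ is not a consequence of the Schwarz--Pick lemma; it is the sharper estimate of Lemma~\ref{lem-3.4} (the $M(r)$ branch), and it is only valid when $|a_0|\ge r$. At $r=1/3$ this leaves the range $|a_0|<1/3$ uncovered. In the paper's proof of Theorem~\ref{th-2.2} this is handled by a separate case using the $N(r)$ branch $r\sqrt{1-|a_0|^2}/\sqrt{1-r^2}$, followed by an easy monotonicity check that the resulting expression stays below $1$ on $[0,1/3)$. You need to add this case. Second, your formula $S(t)=(1-t^2)^2/(3-t^2)^2$ is wrong: at $r=1/3$ the sharp bound \eqref{e-1.55} gives
\[
\frac{S_{1/3}}{\pi}\le \frac{(1/9)(1-t^2)^2}{(1-t^2/9)^2}=\frac{9(1-t^2)^2}{(9-t^2)^2},
\]
not $(1-t^2)^2/(3-t^2)^2$; these disagree already at $t=1/2$. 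With the wrong $S(t)$ the subsequent polynomial identities will not recover the stated quintic for $a$ nor the displayed $\lambda$. Once you correct $S(t)$ and add the $|a_0|<1/3$ case, the remaining one-variable argument (find the unique stationary point, check $\Phi$ vanishes there, check endpoint signs) goes through exactly as in the paper's proofs of Theorems~\ref{th-2.2}--\ref{th-2.21}, and sharpness via $f_a$ is as you describe.
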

\begin{thm}\cite{Ismagilov-2020-JMAA}\label{th-1.10}
	Suppose that $ f(z)=\sum_{n=0}^{\infty}a_nz^n\in\mathcal{B} $. Then 
	\begin{equation*}
		|f(z)|^2+\sum_{n=1}^{\infty}|a_n|r^n+\frac{16}{9}\left(\frac{S_r}{\pi}\right)+\lambda\left(\frac{S_r}{\pi}\right)^2\leq 1\;\; \mbox{for}\;\; r\leq\frac{1}{3},
	\end{equation*}
	where
	\begin{equation*}
		\lambda=\frac{-81+1044a+54a^2-116a^3-5a^4}{162(a+1)^2(2a-1)}=16.4618...
	\end{equation*}
	and $ a\approx 0.537869 $, is the unique root of the equation $-513+910t+80t^2+2t^3+t^4=0$
in the interval $ (0,1) $.
	The equality is achieved for the function $ f_a $.
\end{thm}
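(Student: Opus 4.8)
\emph{Proof proposal.} After a unimodular rotation of $ f $ we may assume $ a_0=|a_0|=:a\in[0,1) $, the case $ |a_0|=1 $ being trivial. For a fixed $ f $ the map
\[
r\ \longmapsto\ \max_{|z|=r}|f(z)|^2+\sum_{n\ge1}|a_n|r^n+\frac{16}{9}\,\frac{S_r}{\pi}+\lambda\Big(\frac{S_r}{\pi}\Big)^2
\]
is nondecreasing, since $ \max_{|z|=r}|f(z)| $, $ \sum_{n\ge1}|a_n|r^n $ and $ S_r $ are each nondecreasing in $ r $; hence it suffices to prove the inequality at $ r=\tfrac13 $. At $ r=\tfrac13 $ I would bound, in terms of $ a $, the three $ z $-independent ingredients together with $ \max_{|z|=1/3}|f(z)|^2 $. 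The Schwarz--Pick lemma gives $ |f(z)|\le\tfrac{a+1/3}{1+a/3}=\tfrac{3a+1}{3+a} $ for $ |z|=\tfrac13 $. The sharp area inequality \eqref{e-1.55}, applicable since $ \tfrac13<\tfrac1{\sqrt2} $, gives $ \tfrac{S_r}{\pi}\le\tfrac{(1-a^2)^2/9}{(1-a^2/9)^2}=\tfrac{9(1-a^2)^2}{(9-a^2)^2} $, and as $ t\mapsto\tfrac{16}{9}t+\lambda t^2 $ is increasing on $ [0,\infty) $ (both coefficients being positive) this bound may be substituted into the two area terms.

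The crucial ingredient is a sharp estimate of the linear sum $ \sum_{n\ge1}|a_n|r^n $, for which the trivial bound $ |a_n|\le1-a^2 $ is too lossy. From the Schwarz lemma applied to $ (f-a_0)/(1-\overline{a_0}f) $ one writes $ f(z)=\dfrac{a_0+z\omega(z)}{1+\overline{a_0}\,z\omega(z)} $ with $ \omega\in\mathcal B $, whence $ \sum_{n\ge1}a_nz^n=(1-a^2)\dfrac{z\omega(z)}{1+\overline{a_0}\,z\omega(z)} $; expanding the denominator as a geometric series in $ \psi:=z\omega\in\mathcal B $ gives $ \sum_{n\ge1}a_nz^n=(1-a^2)\sum_{k\ge0}(-\overline{a_0})^k\psi^{\,k+1} $. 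Since $ \psi^{k+1}=z^{k+1}\omega^{k+1} $ with $ \omega^{k+1}\in\mathcal B $, Theorem~\ref{th-1.1} applied to $ \omega^{k+1} $ shows $ \sum_{m\ge0}\big|[z^m]\omega^{k+1}\big|\,r^m\le1 $ for $ r\le\tfrac13 $, and summing over $ k $,
\[
\sum_{n\ge1}|a_n|r^n\ \le\ (1-a^2)\sum_{k\ge0}a^{k} r^{k+1}\ =\ (1-a^2)\,\frac{r}{1-ar}\ ,
\]
which at $ r=\tfrac13 $ equals $ \dfrac{1-a^2}{3-a} $ and is attained by $ f_a(z)=\dfrac{a-z}{1-az} $. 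Combining the three bounds reduces the theorem to the single-variable inequality
\[
\Phi(a):=\Big(\frac{3a+1}{3+a}\Big)^2+\frac{1-a^2}{3-a}+\frac{16(1-a^2)^2}{(9-a^2)^2}+\frac{81\lambda\,(1-a^2)^4}{(9-a^2)^4}\ \le\ 1,\qquad a\in[0,1].
\]

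To finish, I would determine $ a_0 $ and $ \lambda $ from the two extremality conditions $ \Phi(a_0)=1 $ and $ \Phi'(a_0)=0 $: eliminating $ \lambda $ between them and clearing denominators produces the quartic $ -513+910a_0+80a_0^2+2a_0^3+a_0^4=0 $, whose unique root in $ (0,1) $ is $ a_0\approx0.537869 $, and substituting back yields $ \lambda=\dfrac{-81+1044a_0+54a_0^2-116a_0^3-5a_0^4}{162(a_0+1)^2(2a_0-1)}=16.4618\ldots $. With this $ \lambda $ one verifies $ \Phi(a)\le1 $ on $ [0,1] $ by writing $ \Phi(a)-1=N(a)/(9-a^2)^4 $ with $ N $ a polynomial; since $ (9-a^2)^4>0 $ on $ [0,1] $, it suffices to show $ N(a)\le0 $ there, and by the extremality conditions $ N $ has a double zero at $ a_0 $ and a zero at $ a=1 $ (because $ \Phi(1)=1 $), so after factoring out $ (a-a_0)^2(a-1) $ the remaining factor must be shown nonnegative on $ [0,1] $---a finite, elementary check. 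Sharpness is then immediate: for $ f=f_{a_0} $ and $ z=-\tfrac13 $ one has $ |f_{a_0}(z)|=\tfrac{3a_0+1}{3+a_0} $, $ \sum_{n\ge1}|a_n|3^{-n}=\tfrac{1-a_0^2}{3-a_0} $, and equality in \eqref{e-1.55}, so the left side equals $ \Phi(a_0)=1 $; hence neither $ \tfrac13 $ nor $ \lambda $ can be enlarged.

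The principal obstacle is precisely this last certification of $ N(a)\le0 $ on $ [0,1] $ for the exact value of $ \lambda $. The extremality conditions pin down the factors $ (a-a_0)^2 $ (nonnegative on $ [0,1] $) and $ (a-1) $ (nonpositive on $ [0,1] $), so everything hinges on the sign of the complementary polynomial factor over $ [0,1] $, whose verification is where the genuine computational work lies; by comparison, the structural steps---reduction to $ r=\tfrac13 $, Schwarz--Pick, \eqref{e-1.55}, and the geometric-series estimate built on Bohr's theorem (Theorem~\ref{th-1.1})---are short.
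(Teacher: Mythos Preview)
This theorem is quoted from \cite{Ismagilov-2020-JMAA} and is not proved in the present paper, so I compare your proposal with the template the paper uses for its own closely analogous results (Theorems~\ref{th-2.2}--\ref{th-2.21}), which mirrors the original source. Your skeleton---monotone reduction to $r=\tfrac13$, Schwarz--Pick for $|f(z)|^2$, the area bound \eqref{e-1.55} for $S_r/\pi$, and reduction to a one-variable inequality $\Phi(a)\le1$ with $(\lambda,a_0)$ determined by the tangency conditions $\Phi(a_0)=1$, $\Phi'(a_0)=0$---is exactly the standard route and is correct.

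One genuine difference in your favour: your derivation of $\sum_{n\ge1}|a_n|r^n\le(1-a^2)\,r/(1-ar)$ via the geometric expansion of the automorphism and Bohr's theorem applied to each $\omega^{k+1}$ is a neat device that yields Lemma~\ref{lem-3.4}'s bound $M(r)$ \emph{for all} $a\in[0,1)$ once $r\le\tfrac13$. The paper's lemma furnishes $M(r)$ only under the restriction $|a_0|\ge r$, so the customary proof has to treat the range $|a_0|<\tfrac13$ separately via the bound $N(r)$; your argument removes that case distinction.

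One small correction and one suggestion. The zero of $N(a)$ at $a=1$ is not simple: a short Taylor computation gives $\Phi(a)-1=-\tfrac{15}{16}(1-a)^3+O((1-a)^4)$, so $(1-a)^3$ divides $N$ (this is precisely the factor $(1-|a_0|)^3$ the paper extracts in the proofs of Theorems~\ref{th-2.2}--\ref{th-2.21}). After removing $(1-a)^3$, the paper's way of finishing is not to factor again but to argue directly on the residual polynomial $\Psi$: by the choice of $\lambda$, $\Psi'(a_0)=0$; substituting this $\lambda$ back collapses $\Psi(t)=0$ to the quartic in the statement, whence $\Psi(a_0)=0$; and since $\Psi$ is positive at both endpoints with $a_0$ its unique stationary point in $(0,1)$, one gets $\Psi\ge0$. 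Your plan to factor out $(a-a_0)^2$ and check the sign of the leftover quartic would also work, but the stationary-point argument is tidier and is what you will find in the source.
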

 In \cite{Ismagilov-2021-JMS}, Ismagilov \emph{et al.} have observed that
\begin{align*}
	1-\dfrac{S_r}{\pi}\geq \dfrac{(1-r^2)(1-r^2|a_0|^4)}{(1-|a_0|^2r^2)^2},
\end{align*}
and hence, in view of \eqref{e-1.55}, they established that
\begin{align}\label{e-11.18}
	\dfrac{S_r}{\pi-S_r}\leq \dfrac{r^2(1-|a_0|^2)^2}{(1-r^2)(1-r^2|a_0|^4)}.
\end{align}
With the assistance of this new configuration, Ismagilov \emph{et al.} \cite{Ismagilov-2021-JMS} improved Theorem \ref{th-1.12} and proved the following sharp inequalities in terms of $S_r/(\pi-S_r)$ in the place of $ S_r/\pi $ keeping all others conditions intact.

\begin{thm} \label{th-1.15} \cite{Ismagilov-2021-JMS}
	Suppose that $ f\in\mathcal{B} $ such that $ f(z)=\sum_{n=0}^{\infty}a_nz^n $. Then 
	\begin{align*}
		\sum_{n=0}^{\infty}|a_n|r^n+ \frac{16}{9} \left(\frac{S_{r}}{\pi-S_r}\right) \leq 1 \quad \mbox{for} \quad r \leq \frac{1}{3},
	\end{align*}
	and the number $16/9$ cannot be improved. Moreover, 
	\begin{align*}
		|a_{0}|^{2}+\sum_{n=1}^{\infty}|a_n|r^n+ \frac{9}{8} \left(\frac{S_{r}}{\pi-S_r}\right) \leq 1 \quad \mbox{for} \quad r \leq \frac{1}{2},
	\end{align*}
	and the number $9/8$ cannot be improved.
\end{thm}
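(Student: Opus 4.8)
The plan is to reduce both inequalities to the already--established \emph{sharp} refinements of Bohr's inequality that carry a quadratic correction in $S_r/\pi$, by a short calculus estimate comparing $S_r/(\pi-S_r)$ with $S_r/\pi$. First I would record the crude but uniform consequence of \eqref{e-1.55}: for every $f\in\mathcal B$ and every $0<r\le 1/\sqrt2$,
\[
\frac{S_r}{\pi}\ \le\ r^2\,\frac{(1-|a_0|^2)^2}{(1-|a_0|^2r^2)^2}\ \le\ r^2 ,
\]
since $(1-|a_0|^2)^2\le(1-|a_0|^2r^2)^2$. In particular $S_r/\pi\le 1/9$ whenever $r\le 1/3$ and $S_r/\pi\le 1/4$ whenever $r\le 1/2$. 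Writing $t:=S_r/\pi$ and using $t/(1-t)=t+t^2/(1-t)\le t+t^2/(1-t_0)$ for $t\le t_0$, with $t_0=1/9$, resp.\ $t_0=1/4$, this yields
\[
\frac{16}{9}\cdot\frac{S_r}{\pi-S_r}\ \le\ \frac{16}{9}\cdot\frac{S_r}{\pi}+2\Big(\frac{S_r}{\pi}\Big)^2\quad(r\le \tfrac13),
\]
\[
\frac{9}{8}\cdot\frac{S_r}{\pi-S_r}\ \le\ \frac{9}{8}\cdot\frac{S_r}{\pi}+\frac32\Big(\frac{S_r}{\pi}\Big)^2\quad(r\le \tfrac12).
\]

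For the first inequality I would then combine the first of these with Theorem~\ref{th-1.8}, whose quadratic coefficient $\lambda=18.6\ldots$ is comfortably larger than $2$: for $r\le 1/3$,
\[
\sum_{n=0}^{\infty}|a_n|r^n+\frac{16}{9}\cdot\frac{S_r}{\pi-S_r}\ \le\ \sum_{n=0}^{\infty}|a_n|r^n+\frac{16}{9}\cdot\frac{S_r}{\pi}+\lambda\Big(\frac{S_r}{\pi}\Big)^2\ \le\ 1 .
\]
Any sharp refinement of Theorem~\ref{th-1.12} with quadratic coefficient at least $2$ would suffice, so the full strength of Theorem~\ref{th-1.8} is not essential. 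Note that the naive route---bounding $\sum_{n\ge0}|a_n|r^n\le|a_0|+(1-|a_0|^2)\tfrac{r}{1-r}$ and $S_r/(\pi-S_r)$ by \eqref{e-11.18}---does \emph{not} work: the resulting one-variable expression exceeds $1$ for $|a_0|$ near $1$, which is precisely why the quadratic refinement is needed.

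The hard part will be the second inequality, because the matching input---a bound of the shape $|a_0|^2+\sum_{n\ge1}|a_n|r^n+\tfrac98(S_r/\pi)+\mu(S_r/\pi)^2\le1$ for $r\le 1/2$ with $\mu\ge 3/2$---is not literally among the quoted results: Theorem~\ref{th-1.10} has this form but only for $r\le 1/3$. I would establish it by re-running the proof of the second part of Theorem~\ref{th-1.12} while keeping a quadratic remainder: peel off the coefficient $a_1$, estimate $\sum_{n\ge2}|a_n|r^n$ by Cauchy--Schwarz against $\sum_{n\ge2}|a_n|^2\le 1-|a_0|^2-|a_1|^2$, estimate the tail of $S_r/\pi$ likewise, and use that for $r\le 1/2$ the surplus $1-\big(|a_0|^2+\sum_{n\ge1}|a_n|r^n\big)$ still dominates a term of order $(S_r/\pi)^2$ after invoking \eqref{e-11.18}. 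Concretely this reduces to verifying a one-variable inequality in $a=|a_0|$ which is \emph{tight} only as $a\to 1$ with $r=1/2$---along $f_a(z)=(a-z)/(1-az)$ the left-hand side vanishes to second order in $1-a$ there, so the bookkeeping must be carried to third order in $1-a$. Feeding the resulting inequality into the second displayed estimate above gives $|a_0|^2+\sum_{n\ge1}|a_n|r^n+\tfrac98\,S_r/(\pi-S_r)\le1$ for $r\le 1/2$.

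Finally, sharpness of the constants $16/9$ and $9/8$ comes for free from Theorem~\ref{th-1.12}. Since $0\le S_r<\pi$ one has $S_r/(\pi-S_r)\ge S_r/\pi$, so each inequality in the statement is formally stronger than the corresponding inequality in Theorem~\ref{th-1.12}; hence if $16/9$ (resp.\ $9/8$) could be enlarged here it could be enlarged there, contradicting the sharpness asserted in Theorem~\ref{th-1.12}. The extremal behaviour is exhibited, as there, by $f_a(z)=(a-z)/(1-az)$ as $a\to 1$, at $r=1/3$ and $r=1/2$ respectively.
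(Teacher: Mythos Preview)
The paper does not include its own proof of Theorem~\ref{th-1.15}; the result is quoted from \cite{Ismagilov-2021-JMS} as background. So there is no in--paper argument to compare against, and I will instead comment on the soundness of your plan.

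Your treatment of the first inequality is correct and clean. The key observation $S_r/\pi\le r^2$ (from \eqref{e-1.55}) gives $t:=S_r/\pi\le 1/9$ for $r\le 1/3$, whence $\tfrac{16}{9}\,\tfrac{t}{1-t}\le \tfrac{16}{9}t+2t^2$, and Theorem~\ref{th-1.8} (whose quadratic coefficient $\lambda\approx 18.6$ dominates $2$) finishes the job. The sharpness argument via $S_r/(\pi-S_r)\ge S_r/\pi$ and Theorem~\ref{th-1.12} is also valid.

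The second inequality, however, has a genuine gap. Your reduction requires an auxiliary bound of the shape
\[
|a_0|^2+\sum_{n\ge 1}|a_n|r^n+\tfrac{9}{8}\,\tfrac{S_r}{\pi}+\mu\Big(\tfrac{S_r}{\pi}\Big)^2\le 1\quad (r\le\tfrac12,\ \mu\ge\tfrac32),
\]
and no such statement is available among the cited results: Theorem~\ref{th-1.10} has $|f(z)|^2$ (not $|a_0|^2$) and is limited to $r\le 1/3$, while Theorem~\ref{th-1.21} holds only for $r\le 1/(3-|a_0|)$. Your sketch (``peel off $a_1$, Cauchy--Schwarz on the tail, keep a quadratic remainder'') is plausible but is not a proof; carrying it out amounts to proving a result of the same depth as the target inequality itself.

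There is in fact a much shorter direct route for the second part, using \eqref{e-11.18} and Lemma~\ref{lem-3.4} rather than a quadratic refinement. For $r=1/2$ and $a:=|a_0|\ge 1/2$ these give
\[
|a_0|^2+\sum_{n\ge1}|a_n|r^n+\tfrac{9}{8}\,\tfrac{S_r}{\pi-S_r}\ \le\ a^2+\frac{1-a^2}{2-a}+\frac{3(1-a^2)^2}{2(4-a^4)},
\]
and the right-hand side is $\le 1$ if and only if $2a^4-3a^2+3a-2\le 0$, which factors as $(a-1)(2a^3+2a^2-a+2)\le 0$ and holds on $[0,1]$ since the cubic factor is positive there. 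The case $a<1/2$ is handled with $N(1/2)$ from Lemma~\ref{lem-3.4} and a routine check. This bypasses the missing intermediate result entirely; I would replace your second-part plan by this direct computation.
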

Recently, there has been a good deal of research on Bohr’s phenomenon in various settings, including a refined formulation of the classical version of the Bohr's inequality. In the following, we recall further results that are an improved or refined version of the classical Bohr's inequality. The subsequent results, which are some refined version of the classical Bohr's inequality, have been established by Liu \textit{et al.} in \cite{Liu-Liu-Ponnusamy-2021}.
\begin{thm}\label{th-1.16} \cite{Liu-Liu-Ponnusamy-2021}
	Suppose that $ f(z)=\sum_{n=0}^{\infty}a_nz^n\in\mathcal{B} .$  Then
	\begin{align*}
		\sum_{n=0}^{\infty}|a_n|r^n+\left(\frac{1}{1+|a_0|}+\frac{r}{1-r}\right)\sum_{n=1}^{\infty}|a_n|^2r^{2n}+\frac{8}{9}\left(\frac{S_r}{\pi}\right)\leq 1 \quad \mbox{for} \quad r \leq \frac{1}{3}
	\end{align*}
	and the constants $8/9$ and $ 1/3 $ cannot be improved. Moreover,
	\begin{align*}
	|a_0|^2+\sum_{n=1}^{\infty}|a_n|r^n+\left(\frac{1}{1+|a_0|}+\frac{r}{1-r}\right)\sum_{n=1}^{\infty}|a_n|^2r^{2n}+\frac{9}{8}\left(\frac{S_r}{\pi}\right)\leq 1
\end{align*}
for $ r\leq {1}/{(3-|a_0|)} $ and the constant $ 9/8 $ cannot be improved.
\end{thm}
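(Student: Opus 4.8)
Write $a:=|a_0|$; if $a=1$ then $f$ is a unimodular constant and both assertions are trivial, so assume $0\le a<1$. The plan rests on three estimates for $f\in\mathcal B$. First, an $\ell^2$ bound: the Schwarz--Pick lemma confines $f(\mathbb{D}_r)$ to the Euclidean disk with centre $c_r=a_0(1-r^2)/(1-a^2r^2)$ and radius $\rho_r=(1-a^2)r/(1-a^2r^2)$, and combining $|f-c_r|\le\rho_r$ on $|z|=r$ with the Parseval identity $\frac1{2\pi}\int_0^{2\pi}|f(re^{i\theta})-c_r|^2\,d\theta=\sum_{n\ge1}|a_n|^2r^{2n}+|a_0-c_r|^2$ and $|a_0-c_r|=a\,r^2(1-a^2)/(1-a^2r^2)$ gives
$$\sum_{n=1}^\infty|a_n|^2r^{2n}\le\frac{(1-a^2)^2r^2}{1-a^2r^2}\qquad(0<r<1).$$
Second, and crucially, a sharp bound for the linear part: writing $f=\varphi_{a_0}\circ g$ with $\varphi_{a_0}(w)=(w+a_0)/(1+\bar a_0w)$ and $g\in\mathcal B$, $g(0)=0$, and expanding $\varphi_{a_0}(w)=a_0+(1-a^2)\sum_{k\ge1}(-\bar a_0)^{k-1}w^k$, submultiplicativity of the majorant series yields $\sum_{n\ge1}|a_n|r^n\le(1-a^2)M_g(r)/(1-aM_g(r))$, where $M_g(r)=r\,M_\psi(r)$ with $\psi=g/z\in\mathcal B$. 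Since $M_\psi(r)\le1$ for $r\le\frac13$ by Bohr's theorem (Theorem~\ref{th-1.1}) and $t\mapsto(1-a^2)t/(1-at)$ is increasing,
$$\sum_{n=1}^\infty|a_n|r^n\le\frac{(1-a^2)r}{1-ar}\qquad\Big(r\le\tfrac13\Big);$$
a second derivation, valid whenever $r<a$, comes from Cauchy--Schwarz with weights $s^{\,n-1}$ applied to the first estimate and optimised over $s\in(r^2,1)$ (the optimum is $s=ar$), which covers the range $r\le1/(3-a)$ needed below once $a\ge(3-\sqrt5)/2$; for the remaining small $a$ the crude bound $\sum_{n\ge1}|a_n|r^n\le(1-a^2)r/(1-r)$ is more than enough. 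Both bounds, like the first one, are attained by $f_a(z)=(a-z)/(1-az)$. Third, I invoke \eqref{e-1.55}: $S_r/\pi\le(1-a^2)^2r^2/(1-a^2r^2)^2$ for $r\le1/\sqrt2$, which applies since $r\le\frac13$ (resp. $r\le\frac1{3-a}\le\frac12$).

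For the first inequality, substituting these three estimates bounds the left-hand side by the explicit rational function
$$\Phi(a,r)=a+\frac{(1-a^2)r}{1-ar}+\Big(\frac1{1+a}+\frac r{1-r}\Big)\frac{(1-a^2)^2r^2}{1-a^2r^2}+\frac89\frac{(1-a^2)^2r^2}{(1-a^2r^2)^2},$$
and since each building block $\frac r{1-ar}$, $\frac r{1-r}$, $\frac{r^2}{1-a^2r^2}$, $\frac{r^2}{(1-a^2r^2)^2}$ — hence $\Phi(a,\cdot)$ — is increasing on $(0,\frac13]$, it suffices to check $\Phi(a,\tfrac13)\le1$. Using $9-a^2=(3-a)(3+a)$ this simplifies to
$$a+\frac{1-a^2}{3-a}+\frac{(1-a)^2(1+a)}{2(3-a)}+\frac{8(1-a)^2(1+a)^2}{(3-a)^2(3+a)^2}\le1\qquad(0\le a\le1),$$
which after clearing denominators is the non-negativity of an explicit polynomial on $[0,1]$ vanishing only at $a=1$. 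The second inequality follows from the same scheme with $|a_0|$ replaced by $|a_0|^2$ and $r$ running up to $\frac1{3-a}$: one uses the Cauchy--Schwarz form of the linear bound when $a\ge(3-\sqrt5)/2$ (so $r\le\frac1{3-a}\le a$) and the crude bound otherwise, exploits monotonicity in $r$, and is reduced to checking, in the sharp range,
$$a^2+\frac{(1-a)(1+a)}{3-2a}+\frac{(1-a)^2(1+a)}{(2-a)(3-2a)}+\frac{(1-a)^2(1+a)^2(3-a)^2}{8(3-2a)^2}\le1,$$
again a polynomial inequality.

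For sharpness, take $f=f_a$: all three estimates above become equalities, so the left-hand side of the first inequality equals $\Phi(a,\tfrac13)=:G(a)$ with $G(1)=1$, and a short Taylor expansion at $a=1$ gives $G(1-\varepsilon)=1+O(\varepsilon^3)$, the $\varepsilon^2$-terms cancelling exactly. Hence replacing $8/9$ by $8/9+\delta$ turns this into $1+\tfrac{9\delta}{16}\varepsilon^2+O(\varepsilon^3)>1$, while for any $r_0>\tfrac13$ one has $\partial_a\Phi(a,r_0)\big|_{a=1}=1-\tfrac{2r_0}{1-r_0}<0$, so $\Phi(a,r_0)>1$ for $a$ near $1$; thus $8/9$ and $1/3$ are best possible, and the identical computation with $9/8$ and $r=\tfrac1{3-a}$ disposes of the second inequality. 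The main obstacle — and the reason the statement is delicate — is precisely the regime $a\to1^-$: the naive estimate $\sum|a_n|r^n\le(1-a^2)r/(1-r)$ is too lossy there and would violate the target inequality, so the heart of the argument is the sharp linear bound together with the sharp $\ell^2$ bound; the closing polynomial inequalities, although elementary, are exactly where the constants $8/9$ and $9/8$ get pinned down.
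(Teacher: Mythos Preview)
The paper does not prove Theorem~\ref{th-1.16}; it is quoted from \cite{Liu-Liu-Ponnusamy-2021}. The route taken there, and the one the present paper uses for its own refinements (Theorems~\ref{th-2.18}--\ref{th-2.21}), is Lemma~\ref{lem-3.5} with $N=1$, which packages the linear and quadratic parts of the majorant into a single estimate
\[
\sum_{n\ge1}|a_n|r^n+\Bigl(\tfrac{1}{1+|a_0|}+\tfrac{r}{1-r}\Bigr)\sum_{n\ge1}|a_n|^2r^{2n}\le\frac{(1-|a_0|^2)\,r}{1-r}\qquad(0\le r<1).
\]
One then adds the $S_r$ bound \eqref{e-1.55} and is left with a one-variable polynomial inequality in $a=|a_0|$.

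Your approach is structurally different: you bound $\sum_{n\ge1}|a_n|r^n$ and $\sum_{n\ge1}|a_n|^2r^{2n}$ separately by their sharp values (Lemma~\ref{lem-3.4} and a Schwarz--Pick/Parseval argument). What you may not have noticed is the algebraic identity
\[
\frac{(1-a^2)r}{1-ar}+\Bigl(\tfrac{1}{1+a}+\tfrac{r}{1-r}\Bigr)\frac{(1-a^2)^2r^2}{1-a^2r^2}=\frac{(1-a^2)r}{1-r},
\]
so wherever both of your sharp bounds are available you recover exactly the Lemma~\ref{lem-3.5} estimate, and the closing polynomial checks coincide. This is why your $\Phi(a,1/3)$ collapses so neatly.

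The one genuine gap is in the second inequality for small $a$. Your sharp linear bound needs $r\le\tfrac13$ (Bohr route) or $r\le a$ (Cauchy--Schwarz route, i.e.\ Lemma~\ref{lem-3.4}), and neither covers $r=1/(3-a)$ when $a<(3-\sqrt5)/2$. You fall back on the crude bound $(1-a^2)r/(1-r)$, but then the $\ell^2$ term is no longer absorbed; the resulting upper bound is strictly larger than what Lemma~\ref{lem-3.5} delivers, and your assertion that it is ``more than enough'' is left unverified. It is in fact true (the margin is comfortable for $a\le0.38$), but a line of justification is missing. Using Lemma~\ref{lem-3.5} directly, which is valid for all $r<1$, removes the case distinction altogether and is the cleaner argument.
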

\begin{thm}\label{th-1.17} \cite{Liu-Liu-Ponnusamy-2021}
		Suppose that $ f(z)=\sum_{n=0}^{\infty}a_nz^n\in\mathcal{B} $. Then
		\begin{align*}
		\sum_{n=0}^{\infty}|a_n|r^n+\left(\frac{1}{1+|a_0|}+\frac{r}{1-r}\right)\sum_{n=1}^{\infty}|a_n|^2r^{2n}+|f(z)-a_0|\leq 1 \quad \mbox{for} \quad r \leq \frac{1}{5}
	\end{align*}
	and the number $1/5$ cannot be improved. Moreover,
\begin{align*}
|a_0|^2+\sum_{n=1}^{\infty}|a_n|r^n+\left(\frac{1}{1+|a_0|}+\frac{r}{1-r}\right)\sum_{n=1}^{\infty}|a_n|^2r^{2n}+|f(z)-a_0|\leq 1 
\end{align*}
for $r\leq1/3$ and the number $1/3$ cannot be improved.
\end{thm}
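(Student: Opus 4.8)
The plan is to reduce the proof, in both parts, to three things: the Schwarz--Pick estimate for $|f(z)-a_0|$, one sharp coefficient lemma, and an elementary inequality in the two real parameters $a:=|a_0|$ and $r:=|z|$. Throughout put $M:=1-a^2$; the case $a=1$ is trivial (then $f\equiv a_0$ and the left sides in Theorem~\ref{th-1.17} equal $1$), so assume $0\le a<1$.

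I would first record the two ingredients. (i) Since $\psi:=(f-a_0)/(1-\overline{a_0}f)$ maps $\mathbb{D}$ into $\mathbb{D}$ and fixes the origin, the Schwarz lemma gives $|\psi(z)|\le r$; solving for $f$ yields $f(z)-a_0=(1-|a_0|^2)\psi(z)/(1+\overline{a_0}\psi(z))$, and since $t\mapsto t/(1-at)$ is increasing on $[0,1/a)$ we get
\[
|f(z)-a_0|\le \frac{Mr}{1-ar},
\]
with equality for $f_a(z)=(a-z)/(1-az)$ (for which $\psi(z)=-z$). (ii) The key technical lemma: for every $f\in\mathcal{B}$ and $0\le r<1$,
\[
\sum_{n=1}^{\infty}|a_n|r^{n}+\left(\frac{1}{1+a}+\frac{r}{1-r}\right)\sum_{n=1}^{\infty}|a_n|^2r^{2n}\le \frac{Mr}{1-r},
\]
again an equality for $f=f_a$ (a short computation shows both sides equal $Mr(1+r)/(1-a^2r^2)$ after peeling off $\tfrac{r}{1-r}\sum|a_n|^2r^{2n}$). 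I would derive this from the sharp area inequality \eqref{e-1.55}, which bounds $\sum_{n\ge1}|a_n|^2r^{2n}$, together with the standard coefficient estimates $|a_n|\le M$ and $\sum_{n\ge1}|a_n|^2\le M$. This is the only real work: the precise weight $\tfrac{1}{1+a}+\tfrac{r}{1-r}$ is exactly what makes the combination sharp at $f_a$, and the crude bounds alone do \emph{not} suffice (e.g. the formal coefficient vector $(1/\sqrt2,\tfrac12,\tfrac12,0,\dots)$ obeys them but would violate the lemma already at $r=1/5$), so genuine boundedness of $f$ must enter.

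Granting (i) and (ii), write $\sum_{n\ge0}|a_n|r^n=a+\sum_{n\ge1}|a_n|r^n$ and add $|f(z)-a_0|$ to the lemma; for the first sum in Theorem~\ref{th-1.17} this gives
\[
\sum_{n=0}^{\infty}|a_n|r^{n}+\left(\frac{1}{1+a}+\frac{r}{1-r}\right)\sum_{n=1}^{\infty}|a_n|^2r^{2n}+|f(z)-a_0|\le a+M\left(\frac{r}{1-r}+\frac{r}{1-ar}\right),
\]
so it remains to check $a+M\left(\tfrac{r}{1-r}+\tfrac{r}{1-ar}\right)\le 1$ for $0\le a<1$, $0<r\le 1/5$. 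Using $M=(1-a)(1+a)$, the right side equals $1-(1-a)\left[\,1-(1+a)\left(\tfrac{r}{1-r}+\tfrac{r}{1-ar}\right)\right]$; the factor $(1+a)\left(\tfrac{r}{1-r}+\tfrac{r}{1-ar}\right)$ is increasing in $a$ with supremum $\tfrac{4r}{1-r}$ as $a\to1^-$, and $\tfrac{4r}{1-r}\le1$ precisely when $r\le 1/5$. For the second assertion one runs exactly the same two steps but keeps the leading term $|a_0|^2$; the reduced inequality is $a^2+M\left(\tfrac{r}{1-r}+\tfrac{r}{1-ar}\right)\le1$, i.e. (dividing by $M>0$) $\tfrac{r}{1-r}+\tfrac{r}{1-ar}\le1$, which is again monotone in $a$ with worst case $a\to1^-$ giving $\tfrac{2r}{1-r}\le1$, i.e. $r\le 1/3$.

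For sharpness I would test $f=f_a$ with $z=r>0$: every inequality above becomes an equality, so the first sum equals exactly $a+(1-a^2)\left(\tfrac{r}{1-r}+\tfrac{r}{1-ar}\right)$; expanding near $a=1$ this is $1-(1-a)\tfrac{1-5r}{1-r}+o(1-a)$, which exceeds $1$ for any fixed $r>1/5$ once $a$ is close enough to $1$, so $1/5$ cannot be enlarged. The same substitution in the second sum gives $a^2+(1-a^2)\left(\tfrac{r}{1-r}+\tfrac{r}{1-ar}\right)=1-(1-a^2)\tfrac{1-3r}{1-r}+o(1-a)$, which exceeds $1$ for $r>1/3$ and $a$ near $1$; hence $1/3$ is best possible. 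The single genuine obstacle in this scheme is the key lemma (ii): any weakening of its right-hand side, or of the weight on $\sum|a_n|^2r^{2n}$, would break the equality at $f_a$ and therefore the sharpness of both radii, so the care lies entirely in proving it with exactly the stated constants.
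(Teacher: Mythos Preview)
Your overall scheme is correct and mirrors the paper's methodology exactly. The paper does not prove Theorem~\ref{th-1.17} itself (it is quoted from \cite{Liu-Liu-Ponnusamy-2021}), but its proofs of the sharper Theorems~\ref{th-2.2}--\ref{th-2.21} rest on precisely your two ingredients: your lemma~(ii) is verbatim the paper's Lemma~\ref{lem-3.5} with $N=1$, and the bound $|f(z)-a_0|\le (1-a^2)r/(1-ar)$ is the Schwarz--Pick estimate (the paper uses the equivalent route $|f(z)-a_0|\le\sum_{n\ge1}|a_n|r^n\le M(r)$ via Lemma~\ref{lem-3.4}, which for $a\ge r$ gives the same bound). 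Your reductions to $\tfrac{4r}{1-r}\le1$ and $\tfrac{2r}{1-r}\le1$ and the sharpness test with $f_a$ at $z=r$, $a\to1^-$, are exactly how the paper argues in its own theorems.

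The one loose thread is your proposed derivation of lemma~(ii). You suggest obtaining it from the area inequality~\eqref{e-1.55} together with $|a_n|\le M$ and $\sum_{n\ge1}|a_n|^2\le M$, and then immediately (and correctly) observe that these crude bounds cannot do the job. They really cannot: the actual proof in \cite{Liu-Liu-Ponnusamy-2021} proceeds by writing $f(z)=a_0+(1-|a_0|^2)z\,\phi(z)/(1+\overline{a_0}z\phi(z))$ with $\phi\in\mathcal B$ and comparing coefficients, not via the area bound. Since the paper also imports this lemma rather than proving it, your write-up is at the same level of completeness as the paper's; but if you intend a self-contained argument, the derivation of (ii) needs the Schur--type factorization rather than the area estimate. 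Apart from this, the parenthetical check that both sides of (ii) agree for $f_a$ should read that they equal $Mr/(1-r)$ (your displayed value $Mr(1+r)/(1-a^2r^2)$ is an intermediate expression, not the common value).
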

Huang \emph{et al.} \cite{Huang-Liu-Ponnu-CVEE-2021} addressed the following problem of further studying Theorems \ref{th-1.8} and \ref{th-1.16} in their recent study.
\begin{prob}\label{q-1.18}
	Can we derive a more accurate formulation of Theorems \ref{th-1.16} in the context of Theorems \ref{th-1.8} and \ref{th-1.10} while retaining the radius and including a non-negative term?
\end{prob}
As a matter of fact, Huang \emph{et al.} \cite{Huang-Liu-Ponnu-CVEE-2021}  have demonstrated the following two results, which solved the Problem \ref{q-1.18}.
\begin{thm}\label{th-1.20}\cite{Huang-Liu-Ponnu-CVEE-2021}
	Suppose that $ f(z)=\sum_{n=0}^{\infty}a_nz^n\in\mathcal{B} $. Then 
	\begin{align*}
		\sum_{n=0}^{\infty}|a_n|r^n+\left(\frac{1}{1+|a_0|}+\frac{r}{1-r}\right)\sum_{n=1}^{\infty}|a_n|^2r^{2n}+\frac{8}{9}\left(\frac{S_r}{\pi}\right)\\\quad+\lambda\left(\dfrac{S_r}{\pi}\right)^2\leq 1 \;\; \mbox{for}\;\; r\leq\dfrac{1}{3},
	\end{align*}
		where 
	\begin{equation*}
		\lambda=\dfrac{-2673+2502a+2025a^2-332a^3-255a^4+6a^5+7a^6}{162(1+a)^3(5a-3)}=14.796883...
	\end{equation*}
	and $a\approx0.587459$ is the unique root of the equation  
	\begin{align*}
		2t^6+4t^5+14t^4-40t^3-1218t^2-1756t+1458=0
	\end{align*} 
in the interval $(0,1)$.
The equality  is achieved for the function 
	$f_a$.
\end{thm}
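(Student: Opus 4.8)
The plan is to reduce the estimate to a single inequality in the one variable $a:=|a_0|\in[0,1)$, evaluated at the extremal radius $r=1/3$, and then to recover $\lambda$ from the requirement that equality hold for $f_a$. After composing with a rotation we may assume $a_0=a\in[0,1)$, the case $a=1$ being trivial. I would first record three sharp auxiliary bounds, each of which is an equality for $f_a$. (i) For $r\le 1/3$, $\sum_{n=1}^{\infty}|a_n|r^n\le (1-a^2)r/(1-ar)$: write $f=(a+g)/(1+ag)$ with $g\in\mathcal{B}$, $g(0)=0$, so that $f-a=(1-a^2)\sum_{k\ge 1}(-a)^{k-1}g^{k}$; since $g^{k}/z^{k}\in\mathcal{B}$, Bohr's inequality \eqref{e-1.2} gives $\sum_{n\ge k}|[g^{k}]_n|r^{n}\le r^{k}$, and summing the geometric series in $k$ yields the claim. (ii) For $0<r<1$, $\sum_{n=1}^{\infty}|a_n|^2r^{2n}\le (1-a^2)^2r^2/(1-a^2r^2)$: since $f$ is subordinate to the automorphism $z\mapsto (a+z)/(1+az)$, Littlewood's subordination theorem with exponent $2$ gives $\sum_{n\ge 0}|a_n|^2r^{2n}\le\sum_{n\ge 0}|[F]_n|^2r^{2n}$ for $F(z)=(a+z)/(1+az)$, and subtracting the common term $a^2$ leaves $(1-a^2)^2r^2/(1-a^2r^2)$. (iii) The area inequality \eqref{e-1.55}, $S_r/\pi\le (1-a^2)^2 r^2/(1-a^2 r^2)^2$ for $r\le 1/\sqrt{2}$.

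Since $\lambda>0$, the map $x\mapsto\frac89 x+\lambda x^2$ is increasing on $[0,\infty)$, so (iii) may be substituted into both occurrences of $S_r/\pi$; together with (i) and (ii) this bounds the left-hand side of the theorem by the function $\Phi(a,r)$ obtained by replacing every coefficient sum in the statement by its value for $f_a$. Each summand of $\Phi(a,r)$ is non-decreasing in $r$ on $[0,1/3]$, so it suffices to prove $\Phi(a,1/3)\le 1$ for all $a\in[0,1]$. Evaluating at $r=1/3$ and using the identities $1-a-\frac{1-a^2}{3-a}=\frac{2(1-a)^2}{3-a}$ and $\frac{1}{1+a}+\frac12=\frac{3+a}{2(1+a)}$, the middle terms collapse and $\Phi(a,1/3)\le 1$ becomes equivalent to $\lambda\le\rho(a)$, where a short computation gives
\[
\rho(a)=\frac{(9-a^2)^2}{162\,(1-a^2)^2}\left(\frac{(9-a^2)^2}{(1+a)^2}-16\right)=\frac{(9-a^2)^2(a+5)(13+4a-a^2)}{162\,(1-a)(1+a)^4}.
\]

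Hence the largest admissible constant is $\lambda=\min_{a\in[0,1]}\rho(a)$. One checks $\rho(0)=65/2$, $\rho>0$ on $[0,1)$, and $\rho(a)\to+\infty$ as $a\to 1^{-}$, so the minimum is attained at an interior critical point $a^{*}$. Computing $\rho'(a^{*})=0$ by logarithmic differentiation and clearing the common denominator $(9-a^2)(a+5)(13+4a-a^2)(1-a)(1+a)$ produces precisely the sextic $2t^6+4t^5+14t^4-40t^3-1218t^2-1756t+1458=0$, whose unique root in $(0,1)$ is $a^{*}\approx 0.587459$; substituting back and simplifying yields $\lambda=\rho(a^{*})$ in the stated form $\dfrac{-2673+2502a+2025a^2-332a^3-255a^4+6a^5+7a^6}{162(1+a)^3(5a-3)}$. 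Sharpness is then automatic: since (i)–(iii) are equalities for $f_{a^{*}}$, the left-hand side evaluated at $f_{a^{*}}$ equals $\Phi(a^{*},r)$, which is exactly $1$ at $r=1/3$ and strictly larger for $r>1/3$; hence neither $\lambda$ nor the radius $1/3$ can be improved.

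The verification of (i)–(iii) and of the monotonicity in $r$ is routine. The real work, and the main obstacle, is the one-variable analysis of $\rho$ on $[0,1]$: one must show that the degree-six polynomial obtained from $\rho'(a)=0$ after clearing denominators changes sign exactly once in $(0,1)$, so that $\rho$ has a unique interior minimum and $\min_{[0,1]}\rho=\rho(a^{*})$ with no competing critical value, and then carry out the lengthy but elementary algebra reducing $\rho(a^{*})$ to the quoted rational expression in $a^{*}$.
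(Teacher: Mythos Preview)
The paper itself does not prove this statement: Theorem~\ref{th-1.20} is quoted from \cite{Huang-Liu-Ponnu-CVEE-2021} as background, so the relevant comparison is with the proofs of the directly analogous Theorems~\ref{th-2.18} and~\ref{th-2.21} that the paper does supply, which follow the same template as the cited source.

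Your argument is correct and lands on the identical one-variable minimization at $r=1/3$, but your route to it differs at one point. The paper's method (and that of \cite{Huang-Liu-Ponnu-CVEE-2021,Liu-Liu-Ponnusamy-2021}) bounds the linear and quadratic majorant pieces \emph{together} via Lemma~\ref{lem-3.5} with $N=1$,
\[
\sum_{n=1}^{\infty}|a_n|r^n+\left(\frac{1}{1+|a_0|}+\frac{r}{1-r}\right)\sum_{n=1}^{\infty}|a_n|^2r^{2n}\le\frac{(1-|a_0|^2)r}{1-r},
\]
and then applies the area estimate~\eqref{e-3.3}. You instead bound the two pieces \emph{separately}: the linear sum by an iterated application of the classical Bohr inequality to the Schwarz factor of $f$, and the quadratic sum by Littlewood's subordination inequality for $f\prec (a+z)/(1+az)$. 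Both packages are sharp for $f_a$, and a short computation shows that at $r=1/3$ your separate bounds recombine exactly to $(1-a^2)/2$; thus your $\rho(a)$ and the ensuing calculus (critical point, the sextic, the value of $\lambda$, sharpness via $f_{a^*}$) coincide with the paper's. The payoff of your decomposition is that it uses only the classical Bohr theorem and Littlewood's theorem rather than the bespoke Lemma~\ref{lem-3.5}; the payoff of the paper's approach is that it reaches the one-variable inequality in a single stroke, without having to check that separately obtained bounds add back up to the right quantity.
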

	\begin{thm}\cite{Huang-Liu-Ponnu-CVEE-2021}\label{th-1.21}
			Suppose that $ f(z)=\sum_{n=0}^{\infty}a_nz^n\in\mathcal{B} $ . Then
	\begin{align*}
		|a_0|^2+\sum_{n=1}^{\infty}|a_n|r^n+\left(\frac{1}{1+|a_0|}+\frac{r}{1-r}\right)\sum_{n=1}^{\infty}|a_n|^2r^{2n}+\frac{9}{8}\left(\frac{S_r}{\pi}\right)\\\quad+\lambda\left(\dfrac{S_r}{\pi}\right)^2\leq 1 \;\; \mbox{for}\;\; r\leq\dfrac{1}{3-|a_0|},
	\end{align*}
 where 
	\begin{align*}
		\lambda&=\dfrac{-80919+119556a-57591a^2+11664a^3-1620a^4}{8(a-3)^2(1+a)^2(9a^3-33a^2+29a-1)}=13.966088...
	\end{align*}
	and $a\approx0.638302$ is the unique root of the equation  
	\begin{align*}
		-1296t^8+17172t^7-154386t^6+798660t^5-2361960t^4+4132944t^3\\\quad-4244238t^2+2344464t-524880=0
	\end{align*}
 in the interval $(0,1)$.
	The equality is achieved for the function $ f_a $.
\end{thm}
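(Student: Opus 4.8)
The plan is to run the argument behind Theorem~\ref{th-1.20} in the ``$|a_0|^2$''-setting of Theorem~\ref{th-1.16}. Write $a:=|a_0|\in[0,1)$; since every term of the left-hand side depends only on the moduli $|a_n|$, we may take $a_0=a\ge0$. The idea has four steps: (i) bound the Bohr part $\sum_{n\ge1}|a_n|r^n+\bigl(\tfrac1{1+a}+\tfrac r{1-r}\bigr)\sum_{n\ge1}|a_n|^2r^{2n}$ by its sharp refined majorant; (ii) bound $S_r/\pi$ by its sharp majorant from \eqref{e-1.55}; (iii) collapse the resulting inequality to a one-variable inequality in $a$; (iv) read off $\lambda$ as the value of the associated extremal problem.

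For (i) I would invoke the refined coefficient estimate of \cite{Liu-Liu-Ponnusamy-2021} that underlies Theorems~\ref{th-1.16}--\ref{th-1.17} and rests on the Schwarz--Pick bounds $|a_n|\le1-a^2$, $\sum_{n\ge1}|a_n|^2\le1-a^2$: for every $f\in\mathcal{B}$ with $f(0)=a\ge0$ and all $r\in[0,1)$,
\[
\sum_{n=1}^{\infty}|a_n|r^n+\Bigl(\frac1{1+a}+\frac r{1-r}\Bigr)\sum_{n=1}^{\infty}|a_n|^2r^{2n}\ \le\ (1-a^2)\frac{r}{1-r},
\]
with equality for the maps $f_a$. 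For (ii), since $0<r\le\tfrac1{3-a}\le\tfrac12<\tfrac1{\sqrt2}$, inequality \eqref{e-1.55} gives $S_r/\pi\le B(r,a):=\dfrac{r^2(1-a^2)^2}{(1-a^2r^2)^2}$. As $\lambda>0$, the function $x\mapsto\frac98x+\lambda x^2$ is increasing on $[0,\infty)$, so the left-hand side $\Phi(r)$ of the asserted inequality obeys, for $r\le\tfrac1{3-a}$,
\[
\Phi(r)\ \le\ \Psi(r,a):=a^2+(1-a^2)\frac r{1-r}+\frac98\,B(r,a)+\lambda\,B(r,a)^2 .
\]
A short computation shows $B(\cdot,a)$, hence $\Psi(\cdot,a)$, is increasing on $[0,\tfrac1{3-a}]$, so it suffices to prove $\Psi\bigl(\tfrac1{3-a},a\bigr)\le1$ for all $a\in[0,1)$.

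For (iii), putting $r=\tfrac1{3-a}$ gives $\tfrac r{1-r}=\tfrac1{2-a}$ and $B\bigl(\tfrac1{3-a},a\bigr)=\dfrac{(1-a^2)^2(3-a)^2}{9(3-2a)^2}$; after cancelling the positive factor $(1-a)^2(1+a)$, the inequality $\Psi(\tfrac1{3-a},a)\le1$ becomes
\[
\frac1{2-a}\ \ge\ \frac{(1+a)(3-a)^2}{8(3-2a)^2}+\lambda\,\frac{(1-a)^2(1+a)^3(3-a)^4}{81(3-2a)^4},
\]
i.e. $\lambda\le h(a)$, where $h(a)$ is the quotient obtained by solving this relation for $\lambda$. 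Hence the sharp constant is $\lambda=\min_{a\in[0,1)}h(a)$, and (iv) is to carry out this one-variable minimization: one checks $h(0)=\tfrac{243}{8}$ and $h(a)\to+\infty$ as $a\to1^-$, and that $h'(a)=0$, after clearing denominators, is precisely the degree-eight equation displayed in the statement, whose unique root in $(0,1)$ is $a^*\approx0.638302$; a sign check on $h'$ shows $a^*$ is the global minimiser, and substituting it into $h$ (using $h'(a^*)=0$ to lower the degree) gives the stated rational expression, numerically $\lambda=13.966088\ldots$. This establishes the inequality.

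Sharpness then follows by taking $f=f_{a^*}$: here $f_{a^*}(0)=a^*$, the estimate in (i) holds with equality, and $f_{a^*}'(z)=-(1-a^{*2})(1-a^*z)^{-2}$ gives $S_r/\pi=B(r,a^*)$ exactly, so $\Phi(r)=\Psi(r,a^*)$ for every admissible $r$; since $\lambda=h(a^*)$ makes the last display an equality at $a=a^*$, we obtain $\Phi\bigl(\tfrac1{3-a^*}\bigr)=1$, so neither $\lambda$ nor the radius $\tfrac1{3-|a_0|}$ can be enlarged. I expect the main obstacle to be step (iv): confirming that the degree-eight polynomial has exactly one zero in $(0,1)$, that it yields a minimum rather than a maximum of $h$, and simplifying $h(a^*)$ to the displayed closed form — this is where all the explicit constants come from and is by far the most computation-heavy part.
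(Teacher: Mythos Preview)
Your proposal is correct and follows essentially the same route as the paper (which does not itself prove Theorem~\ref{th-1.21} but reproduces the method in the proof of the parallel Theorem~\ref{th-2.21}): apply Lemma~\ref{lem-3.5} with $N=1$ for step~(i), Lemma~\ref{lem-3.2} for step~(ii), use monotonicity in $r$ to reduce to $r=1/(3-|a_0|)$, and then extremize the resulting one-variable expression to locate $\lambda$ and the critical $a$. The only cosmetic difference is that the paper phrases step~(iv) as ``show $\eta(t)\ge0$ on $[0,1]$ and find its stationary point,'' whereas you phrase it as ``$\lambda=\min_{a}h(a)$''; these are the same computation, and your identification of the degree-eight polynomial as $h'(a)=0$ and of sharpness via $f_{a^*}$ (where both Lemma~\ref{lem-3.5} and \eqref{e-3.3} are attained with equality) matches the paper exactly.
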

In view of Theorem \ref{th-1.17}, it is natural to investigate the following problem for its further improvements.
\begin{prob}\label{q-1.21}
	Is it possible to derive a sharp version of Theorem \ref{th-1.17} with an additional non-negative term and without reducing the radius?
\end{prob}
From the discussions above, it is evident that several authors have established improved and refined versions of Bohr's inequality using the quantity $S_r/\pi$. However, none of them have attempted to establish results for theorems mentioned above in terms of $S_r/(\pi-S_r)$, except for Theorem \ref{th-1.15}.  The aim of this paper is to address the following problem, based on the motivation provided by Theorem \ref{th-1.15} and taking into consideration Theorems \ref{th-1.20} and \ref{th-1.21}.
\begin{prob}\label{q-1.25}
	Can we derive a sharp version of Theorems \ref{th-1.20} and \ref{th-1.21} by replacing the quantity $ S_r/\pi $ with $ S_r/(\pi-S_r) $ without altering the radius?
\end{prob}
In this section, we will establish precise Bohr-type inequalities for Theorems \ref{th-1.17}, \ref{th-1.20} and \ref{th-1.21} to address the Problems \ref{q-1.21} and \ref{q-1.25}. As a matter of fact, we are interest in further investigations on the Bohr's phenomenon for the class $ \mathcal{B} $ with the setting $ S_r/(\pi-S_r) $.\vspace{1.2mm} 

The subsequent statements represent the primary results of this section.
\begin{thm}\label{th-2.2}
	Suppose that $ f(z)=\sum_{n=0}^{\infty}a_nz^n\in\mathcal{B} $. Then
\begin{align}\label{e-2.2}
	\mathcal{A}_{1,f}(r):&=\sum_{n=0}^{\infty}|a_n|r^n+\left(\frac{1}{1+|a_0|}+\frac{r}{1-r}\right)\sum_{n=1}^{\infty}|a_n|^2r^{2n}+|f(z)-a_0|\\&\nonumber+\quad\dfrac{18}{5}\left(\dfrac{S_r}{\pi}\right)+\lambda\left(\dfrac{S_r}{\pi}\right)^2\leq 1 \;\; \mbox{for}\;\;r\leq \dfrac{1}{5},
\end{align}
where 
\begin{align*}
	\lambda=\dfrac{(25-\alpha^2)(11125-3810\alpha-2300\alpha^2-30\alpha^3+7\alpha^4)}{2500(1+\alpha)^3(3-5\alpha)}=118.383318...
\end{align*}
and $\alpha\approx 0.564084$ is the unique root in the interval $(0,1)$ of the equation 
\begin{align*}
	32500-44845t-22435t^2-370t^3+14t^4-t^5+t^6=0.
\end{align*} 
The equality  is achieved for the function
$ f_a(z)=(a-z)/(1-az) $.
\end{thm}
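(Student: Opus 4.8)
The plan is to reduce the estimate to a single real variable by bounding every term of $\mathcal{A}_{1,f}(r)$ by its sharp majorant in terms of $a:=|a_0|$ and $r$, the extremal function being $f_a$. After replacing $f$ by $e^{-i\theta}f(e^{i\theta}z)$ we may assume $a_0=a\in[0,1)$. The Schwarz--Pick lemma (via the explicit Schwarz--Pick disc of $f$) gives $|f(z)-a_0|\le (1-a^2)r/(1-ar)$, while the refined coefficient inequality underlying Theorem~\ref{th-1.17} (\cite{Liu-Liu-Ponnusamy-2021}) gives
\[
\sum_{n=1}^{\infty}|a_n|r^{n}+\Bigl(\frac{1}{1+a}+\frac{r}{1-r}\Bigr)\sum_{n=1}^{\infty}|a_n|^{2}r^{2n}\le\frac{(1-a^2)r}{1-r},
\]
both with equality for $f_a$. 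Adding these together with $|a_0|=a$ yields, with $\Psi_0(a,r):=a+\frac{(1-a^2)r}{1-r}+\frac{(1-a^2)r}{1-ar}$,
\[
\sum_{n=0}^{\infty}|a_n|r^{n}+\Bigl(\frac{1}{1+a}+\frac{r}{1-r}\Bigr)\sum_{n=1}^{\infty}|a_n|^{2}r^{2n}+|f(z)-a_0|\le\Psi_0(a,r);
\]
by Theorem~\ref{th-1.17} applied to $f_a$ we moreover know $\Psi_0(a,r)\le1$ for $r\le1/5$. Finally $S_r/\pi\le\sigma(a,r):=r^2(1-a^2)^2/(1-a^2r^2)^2$ by \eqref{e-1.55} (legitimate since $1/5<1/\sqrt2$), again with equality for $f_a$, and since $x\mapsto\tfrac{18}{5}x+\lambda x^2$ is increasing on $[0,\infty)$ (note $\lambda>0$), we conclude
\[
\mathcal{A}_{1,f}(r)\le\Psi(a,r):=\Psi_0(a,r)+\frac{18}{5}\,\sigma(a,r)+\lambda\,\sigma(a,r)^2 .
\]
Every inequality above is an equality for $f=f_a$ at $z=r$, so the theorem reduces to proving $\Psi(a,r)\le1$ for $0\le a<1$, $0<r\le1/5$, together with $\Psi(\alpha,1/5)=1$.

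The next step is a monotonicity reduction to $r=1/5$: the functions $\frac{(1-a^2)r}{1-r}$, $\frac{(1-a^2)r}{1-ar}$ and $\sigma(a,r)$ are strictly increasing in $r$ on $(0,1/a)$, with $r$-derivatives $\frac{1-a^2}{(1-r)^2}$, $\frac{1-a^2}{(1-ar)^2}$ and $\frac{2r(1-a^2)^2(1+a^2r^2)}{(1-a^2r^2)^3}$, hence so is $\Psi(a,\cdot)$, and $\Psi(a,r)\le\Psi(a,1/5)=:\psi(a)$. Using $\sigma(a,1/5)=\tfrac{25(1-a^2)^2}{(25-a^2)^2}$,
\[
\psi(a)=a+\frac{1-a^2}{4}+\frac{1-a^2}{5-a}+\frac{90(1-a^2)^2}{(25-a^2)^2}+\frac{625\,\lambda\,(1-a^2)^4}{(25-a^2)^4},
\]
and the whole theorem becomes the one-variable inequality $\psi(a)\le1$ on $[0,1]$.

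This is the heart of the matter. Multiplying by the positive polynomial $D(a):=4(5-a)(25-a^2)^4$, the inequality $\psi(a)\le1$ becomes $R_\lambda(a):=D(a)\bigl(1-\psi(a)\bigr)\ge0$ on $[0,1]$, where $R_\lambda$ has degree $11$ and is affine in $\lambda$. Two structural facts pin down the constants. First, $1-\Psi_0(a,1/5)=(1-a)\bigl[1-(1+a)\bigl(\tfrac14+\tfrac1{5-a}\bigr)\bigr]=\tfrac58(1-a)^2+O((1-a)^3)$ while $\tfrac{18}{5}\sigma(a,1/5)=\tfrac58(1-a)^2+O((1-a)^3)$, so the specific value $\tfrac{18}{5}$ is exactly the one forcing $1-\psi$ to vanish to order $\ge3$ at $a=1$; in particular $(1-a)^3\mid R_\lambda$. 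Second, the largest admissible $\lambda$ is the one for which the graph of $\psi$ touches the line $1$ from below at an interior point $a=\alpha$, i.e. $\psi(\alpha)=1$ and $\psi'(\alpha)=0$, whence $(a-\alpha)^2\mid R_\lambda$. Writing out $\psi'(\alpha)=0$ and cancelling the common factors (notably a factor $1-\alpha^2$) produces precisely
\[
32500-44845\alpha-22435\alpha^2-370\alpha^3+14\alpha^4-\alpha^5+\alpha^6=0 ,
\]
whose left side has derivative $-44845-44870\alpha-1110\alpha^2+56\alpha^3-5\alpha^4+6\alpha^5$, manifestly negative on $(0,1)$, so it has a unique root $\alpha\approx0.564084$ there; substituting into $\psi(\alpha)=1$ and solving the linear equation for $\lambda$ gives the stated closed form $\lambda=118.383318\ldots>0$. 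With $\alpha$ and $\lambda$ so determined one has a factorization $R_\lambda(a)=(1-a)^m(a-\alpha)^2 S(a)$ with $m\ge3$ and $\deg S=9-m$, and it remains only to verify $S\ge0$ on $[0,1]$, e.g. by isolating the (few) real roots of $S$ and checking signs on the resulting subintervals.

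I expect this final polynomial-positivity verification — controlling a degree-$11$ polynomial with the irrational parameter $\alpha$, and in particular ruling out a second sign change of $R_\lambda$ on $[0,1]$ — to be the only genuinely laborious step; everything preceding it is either a citation or an elementary monotonicity or Taylor-expansion computation. Sharpness is then immediate: for $f=f_\alpha$, so that $a_n=-\alpha^{\,n-1}(1-\alpha^2)$ for $n\ge1$, evaluation at $z=r=1/5$ turns every inequality used above into an equality, whence $\mathcal{A}_{1,f_\alpha}(1/5)=\psi(\alpha)=1$.
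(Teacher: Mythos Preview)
Your proposal is correct and follows essentially the same strategy as the paper: reduce to $r=1/5$ by monotonicity, replace each term of $\mathcal{A}_{1,f}$ by its sharp majorant in terms of $a=|a_0|$ using Lemma~\ref{lem-3.5} (with $N=1$), the area bound~\eqref{e-3.3}, and a bound on $|f(z)-a_0|$, and then verify the resulting one-variable inequality $\psi(a)\le1$ via a polynomial with a double root at $\alpha$ and a triple root at $a=1$.

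The one genuine difference is your treatment of $|f(z)-a_0|$. You bound it directly by the Schwarz--Pick estimate $(1-a^2)r/(1-ar)$, valid for all $a\in[0,1)$, and hence obtain the single function $\psi(a)$ on the whole interval. The paper instead bounds $|f(z)-a_0|\le\sum_{n\ge1}|a_n|r^n$ and applies Lemma~\ref{lem-3.4}, which forces a case split: for $|a_0|\ge1/5$ it recovers exactly your bound $M(1/5)=(1-a^2)/(5-a)$ and proceeds with the polynomial $\Phi$ (equivalent to your $R_\lambda/(1-a)^3$), while for $|a_0|<1/5$ it uses the cruder bound $N(1/5)=\sqrt{1-a^2}/\sqrt{24}$ and disposes of that range by a separate numerical monotonicity check. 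Your route is a bit cleaner and avoids this detour; since the paper already shows $\Phi$ has its unique stationary point at $\alpha$ with $\Phi(\alpha)=0$ and positive endpoint values on $(0,1)$, its argument in fact extends to $[0,1)$ as well, so nothing is lost. Your conceptual explanation of why the constants $18/5$ and $\lambda$ arise (forcing the order of vanishing at $a=1$ and the tangential contact at $a=\alpha$) is a nice addition not made explicit in the paper.
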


\begin{thm}\label{th-2.18}
	Suppose that $ f(z)=\sum_{n=0}^{\infty}a_nz^n\in\mathcal{B} $. Then
	\begin{align}\label{e-2.19}
		\mathcal{A}_{2,f}(r):&=\sum_{n=0}^{\infty}|a_n|r^n+\left(\frac{1}{1+|a_0|}+\frac{r}{1-r}\right)\sum_{n=1}^{\infty}|a_n|^2r^{2n}+\frac{8}{9}\left(\frac{S_r}{\pi-S_r}\right)\\&\nonumber\quad+\lambda\left(\dfrac{S_r}{\pi-S_r}\right)^2\leq 1 \;\; \mbox{for}\;\; r\leq\dfrac{1}{3},
	\end{align}
 where 
	\begin{align*}
		\lambda=\dfrac{32(27+18\beta+27\beta^2-28\beta^3-15\beta^4-6\beta^5-7\beta^6)}{81(1+\beta)^3(3-5\beta)}=12.342793...
	\end{align*}
	and $\beta\approx 0.531615$ is the unique root in the interval $ (0,1) $ of the equation
	\begin{align*}
		81-126t-54t^2+14t^3-12t^4+2t^5+2t^6-2t^7-t^8=0.
	\end{align*}
	The equality is achieved for the function $ f_a(z)=(a-z)/(1-az) $.
\end{thm}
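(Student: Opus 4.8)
The plan is to reduce \eqref{e-2.19} to a one-variable inequality in $a:=|a_0|\in[0,1]$ by majorizing each block of terms by an explicit function of $a$ and $r$, and then to show the resulting majorant never exceeds $1$ on $0<r\le 1/3$. First I would bound the genuinely Bohr-type part: by the refined Bohr estimate established in \cite{Liu-Liu-Ponnusamy-2021} that underlies Theorems \ref{th-1.16} and \ref{th-1.20} (itself a consequence of the Schwarz--Pick bounds $|a_n|\le 1-a^2$ and $\sum_{n\ge1}|a_n|^2\le 1-a^2$), one has
\[
\sum_{n=0}^{\infty}|a_n|r^n+\Big(\tfrac{1}{1+a}+\tfrac{r}{1-r}\Big)\sum_{n=1}^{\infty}|a_n|^2r^{2n}\le a+(1-a^2)\tfrac{r}{1-r},
\]
the classical majorant bound, with equality for $f_a$. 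For the area block I would invoke \eqref{e-11.18}, giving $S_r/(\pi-S_r)\le t$ with $t:=r^2(1-a^2)^2/\big[(1-r^2)(1-r^2a^4)\big]$; since $s\mapsto \tfrac{8}{9}s+\lambda s^2$ is increasing for $s\ge0$, the last two terms of $\mathcal{A}_{2,f}(r)$ are at most $\tfrac{8}{9}t+\lambda t^2$. Adding these yields $\mathcal{A}_{2,f}(r)\le \Psi(a,r):=a+(1-a^2)\tfrac{r}{1-r}+\tfrac{8}{9}t+\lambda t^2$.

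Next I would exploit monotonicity in $r$. Every summand of $\Psi(a,r)$ is nondecreasing on $(0,1/3]$ --- the term $(1-a^2)r/(1-r)$ obviously, and $t$ because its numerator increases while its denominator decreases --- so $\Psi(a,r)\le \Psi(a,1/3)$. At $r=1/3$ one computes $t=9(1-a^2)^2/\big[8(9-a^4)\big]$, whence
\[
\Psi\Big(a,\tfrac13\Big)=a+\frac{1-a^2}{2}+\frac{(1-a^2)^2}{9-a^4}+\lambda\,\frac{81(1-a^2)^4}{64(9-a^4)^2}.
\]
Thus it remains to prove $H(a):=\Psi(a,\tfrac13)-1\le 0$ for all $a\in[0,1]$.

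The main obstacle is precisely this one-variable inequality, which is also where $\lambda$ and $\beta$ are pinned down. Since $H$ is affine in $\lambda$, the two extremal conditions $H(\beta)=0$ and $H'(\beta)=0$ are each linear in $\lambda$; equating the two resulting expressions eliminates $\lambda$ and produces the octic $81-126t-54t^2+14t^3-12t^4+2t^5+2t^6-2t^7-t^8=0$, whose unique root in $(0,1)$ is $\beta\approx0.531615$, while either condition then returns the stated closed form for $\lambda$. The delicate step is upgrading this local picture to the global bound $H\le0$ on $[0,1]$: after clearing the positive denominator $(9-a^4)^2$ the inequality becomes polynomial, and I would establish it by showing, via a sign analysis of $H'$ (equivalently of the cleared polynomial), that $H$ is strictly negative on $[0,1]$ except at the interior maximum $a=\beta$ and the trivial endpoint $a=1$, where $H=0$.

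Finally, for sharpness I would take $f=f_\beta$. For $f_a$ all intermediate inequalities are in fact equalities: the refined Bohr estimate above is sharp for $f_a$ (direct summation gives equality), and \eqref{e-11.18} is sharp for $f_a$ since $S_r/\pi=r^2(1-a^2)^2/(1-a^2r^2)^2$ forces $1-S_r/\pi=(1-r^2)(1-a^4r^2)/(1-a^2r^2)^2$, so that $S_r/(\pi-S_r)=t$ exactly. Hence $\mathcal{A}_{2,f_\beta}(1/3)=\Psi(\beta,1/3)=1$, showing that neither the radius $1/3$ nor the constant $\lambda$ can be enlarged and that $f_\beta$ is extremal.
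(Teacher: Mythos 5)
Your proposal is correct and follows essentially the same route as the paper's own proof: it uses the refined coefficient estimate (Lemma \ref{lem-3.5} with $N=1$) together with the bound \eqref{e-11.18} for $S_r/(\pi-S_r)$, reduces to $r=1/3$ by monotonicity, pins down $\lambda$ and $\beta$ through the double-root (tangency) conditions which produce exactly the stated octic, and obtains sharpness from the function $f_\beta$, for which all intermediate inequalities are equalities. The only cosmetic difference is that you analyze $H(a)=\Psi(a,1/3)-1$ directly, whereas the paper extracts the factor $(1-a)^3/\bigl(64(9-a^4)^2\bigr)$ and carries out the same sign and root analysis on the remaining function $\Psi(t)$.
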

\begin{thm}\label{th-2.21}
	Suppose that $ f(z)=\sum_{n=0}^{\infty}a_nz^n\in\mathcal{B} $. Then
	\begin{align}\label{e-2.18}
		\mathcal{A}_{3,f}(r):&=|a_0|^2+\sum_{n=1}^{\infty}|a_n|r^n+\left(\frac{1}{1+|a_0|}+\frac{r}{1-r}\right)\sum_{n=1}^{\infty}|a_n|^2r^{2n}\\&\nonumber\quad+\frac{9}{8}\left(\frac{S_r}{\pi-S_r}\right)+\lambda\left(\dfrac{S_r}{\pi-S_r}\right)^2\leq 1 \;\; \mbox{for}\;\; r\leq\dfrac{1}{3-|a_0|},
	\end{align}
 where 
	\begin{align*}
		\lambda&=\dfrac{-13581+22491\gamma-14085\gamma^2-2584\gamma^3+8565\gamma^4-5130\gamma^5+903\gamma^6+636\gamma^7-324\gamma^8+40\gamma^9}{8(3-\gamma)^3(1+\gamma)^2(1-7\gamma+4\gamma^2)}\\&\nonumber=10.787939...
	\end{align*}
	and $\gamma\approx0.571317$ is the unique root of the equation $55647-212544t+296244t^2-200754t^3+61377t^4+5198t^5-10420t^6+972t^7+960t^8+408t^9-420t^{10}+100t^{11}-8t^{12}=0$ in the interval $(0,1)$.
	The equality is achieved for the function
	$f_a(z)=(a-z)/(1-az)$.
\end{thm}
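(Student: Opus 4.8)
\emph{Overview.} The plan is to adapt the method behind Theorem~\ref{th-1.21}, replacing \eqref{e-1.55} (for $S_r/\pi$) by \eqref{e-11.18} (for $S_r/(\pi-S_r)$), and then to choose $\lambda$ as large as possible so that equality persists for $f_a(z)=(a-z)/(1-az)$. Write $a:=|a_0|\in[0,1)$. Two sharp inputs are used: the refined Schwarz--Pick estimate of Liu, Liu and Ponnusamy \cite{Liu-Liu-Ponnusamy-2021} which underlies Theorems~\ref{th-1.16} and~\ref{th-1.17},
\[
\sum_{n=1}^{\infty}|a_n|r^n+\Bigl(\tfrac{1}{1+a}+\tfrac{r}{1-r}\Bigr)\sum_{n=1}^{\infty}|a_n|^2r^{2n}\le\frac{(1-a^2)r}{1-r},
\]
which is an equality for $f_a$; and inequality \eqref{e-11.18}, namely $S_r/(\pi-S_r)\le P(a,r):=r^2(1-a^2)^2/\bigl[(1-r^2)(1-r^2a^4)\bigr]$, legitimate since $r\le 1/(3-a)\le 1/2<1/\sqrt{2}$ (so $\pi-S_r>0$ by the observation of \cite{Ismagilov-2021-JMS}) and again an equality for $f_a$. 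Since $|a_0|^2=a^2$ and $t\mapsto t^2$ is increasing on $[0,\infty)$,
\[
\mathcal{A}_{3,f}(r)\le a^2+\frac{(1-a^2)r}{1-r}+\frac{9}{8}P(a,r)+\lambda\,P(a,r)^2=:\Phi(a,r),
\]
and each summand of $\Phi$ is nondecreasing in $r\in(0,1)$, so it suffices to prove $\Phi\bigl(a,1/(3-a)\bigr)\le1$ for every $a\in[0,1)$.

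\emph{Reduction to one real variable.} With $r=1/(3-a)$ one has $(1-a^2)r/(1-r)=(1-a^2)/(2-a)$ and $P\bigl(a,1/(3-a)\bigr)=T(a)$, where
\[
T(a):=\frac{(1-a^2)^2(3-a)^2}{(2-a)(4-a)\bigl[(3-a)^2-a^4\bigr]},
\]
so the claim is equivalent to $\psi_\lambda(a):=a^2+\tfrac{1-a^2}{2-a}+\tfrac{9}{8}T(a)+\lambda T(a)^2\le1$ on $[0,1)$, i.e.\ to $\lambda\le\Lambda(a):=\bigl(1-a^2-\tfrac{1-a^2}{2-a}-\tfrac{9}{8}T(a)\bigr)/T(a)^2$ for all $a\in(0,1)$. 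A short computation gives $\Lambda(0)=23$, and as $a\to1^-$ the numerator of $\Lambda$ vanishes to order three while $T(a)^2$ vanishes to order four, so $\Lambda(a)\to+\infty$; hence $\Lambda$ attains its infimum over $(0,1)$ at an interior critical point $\gamma$ with $\Lambda'(\gamma)=0$.

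\emph{The extremal point, the constant, and sharpness.} Clearing denominators in $\Lambda'(\gamma)=0$ produces, after cancellation, exactly the degree-$12$ polynomial equation of the statement; a count of sign changes together with control of the sign of that polynomial's derivative shows it has a unique root $\gamma\approx0.571317$ in $(0,1)$. Put $\lambda:=\Lambda(\gamma)$; using the critical equation to simplify $\bigl(1-\gamma^2-\tfrac{1-\gamma^2}{2-\gamma}-\tfrac{9}{8}T(\gamma)\bigr)/T(\gamma)^2$ gives the displayed closed form, numerically $10.787939\ldots$. As $\gamma$ is the global minimiser of $\Lambda$, one has $\Lambda(a)\ge\lambda$ on $(0,1)$, i.e.\ $\psi_\lambda(a)\le1$ on $[0,1)$, whence $\mathcal{A}_{3,f}(r)\le1$ for $r\le 1/(3-|a_0|)$ by the previous two paragraphs. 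For sharpness, take $f=f_\gamma$ and $r=1/(3-\gamma)$: the two estimates above are then equalities and $\psi_\lambda(\gamma)=1$, so $\mathcal{A}_{3,f_\gamma}\bigl(1/(3-\gamma)\bigr)=1$; hence $\lambda$ is best possible and the radius $1/(3-|a_0|)$ cannot be increased.

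\emph{Main obstacle.} The genuinely delicate point is the global inequality $\Lambda(a)\ge\lambda$ on $(0,1)$ (equivalently $\psi_\lambda\le1$): numerically $1-\psi_\lambda(a)$ is very small for $a$ roughly in $(0.6,1)$ and vanishes both at $a=\gamma$ and as $a\to1^-$, so one must enumerate all critical points of $\Lambda$ and exclude a second interior minimum lying below $\lambda$. The surrounding polynomial algebra --- deriving the degree-$12$ equation, proving uniqueness of $\gamma$ in $(0,1)$, and reducing $\Lambda(\gamma)$ to the stated rational expression in $\gamma$ --- accounts for the bulk of the work.
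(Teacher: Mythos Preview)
Your proposal is correct and follows essentially the same route as the paper: bound the first three terms by Lemma~\ref{lem-3.5} with $N=1$, bound $S_r/(\pi-S_r)$ by \eqref{e-11.18}, reduce by monotonicity to $r=1/(3-|a_0|)$, and then optimise in the single variable $a=|a_0|$. Your formulation via $\Lambda(a)$ (the largest admissible $\lambda$ for each $a$) is a trivially equivalent repackaging of the paper's function $\eta(t)$, since $\psi_\lambda(a)\le1\iff\eta(a)\ge0$, and your critical equation $\Lambda'(\gamma)=0$ is exactly the simultaneous system $\eta(\gamma)=\eta'(\gamma)=0$ used in the paper to produce both the displayed formula for $\lambda$ and the degree-$12$ polynomial $Q$; the endpoint checks $\Lambda(0)=23$, $\Lambda(a)\to+\infty$ as $a\to1^-$ play the same role as the paper's verification $\eta(0)>0$, $\eta(1)>0$.
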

Before we delve into the main results of this section, we lay out some necessary lemmas that will be useful in proving them.
\begin{lem} \label{lem-3.2}\cite{Ponnusamy-2017}
	Suppose that $f(z)=\sum_{n=0}^{\infty}a_nz^n\in\mathcal{B}$. Then the following sharp inequality holds:
	\begin{align} \label{e-3.3}
		\frac{S_r}{\pi}=\sum_{n=1}^{\infty}n|a_n|^2r^{2n}\leq r^2\frac{(1-|a_0|^2)^2}{(1-|a_0|^2r^2)^2}\; \,\,\mbox{for}\;\; 0<r\leq1/\sqrt{2}.
	\end{align}
\end{lem}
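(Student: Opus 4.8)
The plan is to recognise that $S_r/\pi=\sum_{n\ge 1}n|a_n|^2r^{2n}$ is controlled, through subordination, by the disc automorphism carrying $0$ to $a_0$, and then to upgrade the classical disc–mean comparison to the weighted comparison we actually need by a summation by parts. First I would discard the degenerate case: if $|a_0|=1$ then $f\equiv a_0$ by the maximum modulus principle, $S_r=0$, and both sides of \eqref{e-3.3} vanish; so assume $|a_0|<1$ and set
\[
g(z)=\frac{f(z)-a_0}{1-\overline{a_0}\,f(z)},\qquad \Xi(w)=\frac{a_0+w}{1+\overline{a_0}\,w}.
\]
Since $|1-\overline{a_0}f|\ge 1-|a_0|>0$, the map $g$ is holomorphic on $\mathbb{D}$, lies in $\mathcal{B}$, and satisfies $g(0)=0$, so $g$ is a Schwarz function; moreover $\Xi$ is an automorphism of $\mathbb{D}$, and solving the defining identity for $f$ gives $f=\Xi\circ g$, so $f$ is subordinate to $\Xi$. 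Writing $\Xi(w)=\sum_{n\ge 0}d_nw^n$, a short computation gives $d_0=a_0$ and $d_n=(1-|a_0|^2)(-\overline{a_0})^{\,n-1}$ for $n\ge 1$, hence $|d_n|=(1-|a_0|^2)|a_0|^{\,n-1}$.

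Next I would invoke the classical subordination theorem of Littlewood and Rogosinski: from $f\prec\Xi$ it follows that $\sum_{k=0}^{N}|a_k|^2\le\sum_{k=0}^{N}|d_k|^2$ for every $N\ge 0$, and therefore, since $|a_0|=|d_0|$, that $\sum_{k=1}^{N}|a_k|^2\le\sum_{k=1}^{N}|d_k|^2$ for every $N\ge 1$. Put $w_n:=nr^{2n}$. For $0<r\le 1/\sqrt{2}$ one has $w_{n+1}/w_n=\tfrac{n+1}{n}r^2\le 2\cdot\tfrac12=1$ for all $n\ge 1$, so $(w_n)_{n\ge 1}$ is non‑increasing and tends to $0$; Abel summation combined with the partial–sum inequality above then yields
\[
\frac{S_r}{\pi}=\sum_{n\ge 1}w_n|a_n|^2\le\sum_{n\ge 1}w_n|d_n|^2=\sum_{n\ge 1}n|d_n|^2r^{2n}.
\]
The last sum equals $(1-|a_0|^2)^2r^2\sum_{n\ge 1}n(|a_0|^2r^2)^{n-1}=r^2(1-|a_0|^2)^2/(1-|a_0|^2r^2)^2$, which is exactly the bound in \eqref{e-3.3}. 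For sharpness I would take $f(z)=(a-z)/(1-az)$ with $a=|a_0|$; then $f=\Xi\circ(-\mathrm{id})$, so $|a_n|=|d_n|$ for every $n$ and equality holds at every step above (equivalently one reads $S_r/\pi$ directly off $f'(z)=(a^2-1)/(1-az)^2$), and the inequality is best possible.

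The only genuinely non‑elementary input is the Rogosinski partial–sum strengthening of Littlewood's subordination theorem; Littlewood's inequality $\sum|a_n|^2\rho^{2n}\le\sum|d_n|^2\rho^{2n}$ by itself would not suffice, because the summation by parts requires domination of the partial sums $\sum_{k\le N}|a_k|^2$ rather than merely of the disc means. The remaining point to watch — and the exact origin of the hypothesis $r\le 1/\sqrt2$ — is the monotonicity of $n\mapsto nr^{2n}$: it holds precisely when $r^2\le 1/2$ and fails as soon as $r^2>1/2$ (already $w_2=2r^4>r^2=w_1$), so this method cannot push the radius past that threshold, which matches the constant $1/\sqrt2$ in the statement.
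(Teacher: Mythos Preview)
The paper does not supply its own proof of this lemma; it is merely quoted (with attributions to \cite{Ponnusamy-2017} and, earlier in the text, to \cite{Kayumov-CRACAD-2018}) as a tool for the main theorems, so there is nothing in the paper to compare your argument against.

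That said, your proof is correct. The subordination $f\prec\Xi$ via the Schwarz function $g=(f-a_0)/(1-\overline{a_0}f)$, Rogosinski's partial-sum inequality $\sum_{k\le N}|a_k|^2\le\sum_{k\le N}|d_k|^2$, and the Abel-summation transfer exploiting that $n\mapsto nr^{2n}$ is non-increasing precisely when $r^2\le 1/2$ combine to give the stated bound, and the disc automorphism $f_a(z)=(a-z)/(1-az)$ realises equality. Your closing remark correctly isolates why the threshold $1/\sqrt{2}$ is intrinsic to this approach: the Abel step collapses the moment the weight sequence $nr^{2n}$ ceases to be monotone, which first fails at $n=1$ exactly when $r^2>1/2$.
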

\begin{lem} \label{lem-3.4} \cite{Kayumov-CMFT-2017, Kayumov-2018-JMAA}
	Suppose that $f(z)=\sum_{n=0}^{\infty}a_nz^n\in\mathcal{B}$. Then we have 
	\begin{align}\label{e-3.5}
		\sum_{n=1}^{\infty}|a_n|r^n\leq
		\begin{cases}
			M(r):=r\displaystyle\frac{1-|a_0|^2}{1-r|a_0|}\quad \mbox{for}\;|a_0|\geq r,\vspace{2mm}\\
			N(r):=r\displaystyle\frac{\sqrt{1-|a_0|^2}}{\sqrt{1-r^2}}\quad\mbox{for}\; |a_0|<r.
		\end{cases}
	\end{align}
The largest integer which is not greater than the given real number $x$ is denoted by $\lfloor{x}\rfloor$.
\end{lem}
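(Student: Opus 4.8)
The plan is to establish a one-parameter family of coefficient inequalities and then, in each of the two regimes $|a_0|\ge r$ and $|a_0|<r$, choose the parameter optimally before applying the Cauchy--Schwarz inequality. After replacing $f(z)$ by $e^{-i\arg a_0}f(z)$ I may assume without loss of generality that $a_0=a:=|a_0|\ge 0$, since this transformation leaves each modulus $|a_n|$ unchanged and hence affects neither side of \eqref{e-3.5}.

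The crux is the following auxiliary estimate: for every $\rho\in(0,1)$,
\begin{align}\label{e-aux}
\sum_{n=1}^{\infty}|a_n|^2\rho^{2n}\le\frac{(1-a^2)^2\rho^2}{1-a^2\rho^2}.
\end{align}
To prove it I would invoke the Schwarz--Pick lemma: the map $\varphi:=(f-a_0)/(1-\overline{a_0}f)$ lies in $\mathcal{B}$ and vanishes at the origin, so $|\varphi(z)|\le|z|$, and a direct manipulation of $|f-a_0|\le|z|\,|1-\overline{a_0}f|$ shows that $f(\mathbb{D}_{\rho})$ is contained in the Euclidean disk with centre $c(\rho)=a_0(1-\rho^2)/(1-a^2\rho^2)$ and radius $R(\rho)=\rho(1-a^2)/(1-a^2\rho^2)$. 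Applying Parseval's identity to $f(\rho\zeta)-c(\rho)$ on $|\zeta|=1$ gives $|a_0-c(\rho)|^2+\sum_{n\ge1}|a_n|^2\rho^{2n}\le R(\rho)^2$, and a short computation reduces $R(\rho)^2-|a_0-c(\rho)|^2$ to the right-hand side of \eqref{e-aux}. (Alternatively, for $\rho\le 1/\sqrt{2}$ one can derive \eqref{e-aux} by integrating \eqref{e-3.3} via $\rho^{2n}=\int_0^{\rho}2n t^{2n-1}\,dt$; but the Schwarz--Pick route yields \eqref{e-aux} on all of $(0,1)$, which is precisely what the second case below needs.)

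With \eqref{e-aux} in hand I would split into the two cases of \eqref{e-3.5}. If $a<r$, then letting $\rho\to1^-$ in \eqref{e-aux} (equivalently, Parseval) gives $\sum_{n\ge1}|a_n|^2\le 1-a^2$, and Cauchy--Schwarz yields
\begin{align*}
\sum_{n=1}^{\infty}|a_n|r^n\le\Big(\sum_{n=1}^{\infty}|a_n|^2\Big)^{1/2}\Big(\sum_{n=1}^{\infty}r^{2n}\Big)^{1/2}\le\sqrt{1-a^2}\;\frac{r}{\sqrt{1-r^2}}=N(r).
\end{align*}
If $a\ge r$, the decisive step is to choose $\rho=\sqrt{r/a}$, which satisfies $r<\rho\le1$ exactly because $ar<1$ and $r\le a$. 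Writing $|a_n|r^n=(|a_n|\rho^n)(r/\rho)^n$ and applying Cauchy--Schwarz together with \eqref{e-aux} gives
\begin{align*}
\sum_{n=1}^{\infty}|a_n|r^n\le\frac{(1-a^2)\rho}{\sqrt{1-a^2\rho^2}}\cdot\frac{r}{\sqrt{\rho^2-r^2}}=\frac{(1-a^2)\rho r}{\sqrt{(1-a^2\rho^2)(\rho^2-r^2)}}.
\end{align*}
Substituting $\rho^2=r/a$ turns $1-a^2\rho^2$ into $1-ar$ and $\rho^2-r^2$ into $r(1-ar)/a$, whereupon the right-hand side collapses to $r(1-a^2)/(1-ar)=M(r)$.

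Finally, for sharpness I would take $f_a(z)=(a-z)/(1-az)$, for which $|a_n|=(1-a^2)a^{n-1}$ for $n\ge1$; then equality holds both in \eqref{e-aux} and in the Cauchy--Schwarz step, since at $\rho^2=r/a$ the factors $|a_n|\rho^n$ and $(r/\rho)^n$ become proportional, and $\sum_{n\ge1}|a_n|r^n$ equals $M(r)$ exactly. The main obstacle is the case $a\ge r$: a direct appeal to Parseval only produces the weaker bound $N(r)$ (one checks $M(r)\le N(r)$ always, with equality iff $a=r$), so the sharp constant $M(r)$ genuinely requires the $\rho$-parametrised inequality \eqref{e-aux} valid up to $\rho$ near $1$, together with the optimal choice $\rho=\sqrt{r/a}$; recognising this choice and confirming that it lands in the admissible range $(r,1]$ is where the real content of the proof lies.
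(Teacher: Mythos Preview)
The paper does not supply its own proof of this lemma: it is quoted, with attribution to \cite{Kayumov-CMFT-2017,Kayumov-2018-JMAA}, as an external tool, so there is nothing in the present paper to compare your argument against.

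That said, your argument is correct and self-contained. The Schwarz--Pick/Parseval derivation of the auxiliary $\ell^2$ bound \eqref{e-aux} is clean and valid on all of $(0,1)$, and both regimes of \eqref{e-3.5} follow from it exactly as you describe. The crucial observation---that in the regime $a\ge r$ a direct appeal to Parseval yields only the weaker bound $N(r)$, and that the sharp $M(r)$ requires applying Cauchy--Schwarz against the weight $\rho^{2n}$ with the optimal choice $\rho^2=r/a$---is precisely the idea behind the original proof in the cited sources, and your algebra showing that the resulting expression collapses to $M(r)$ is accurate. One small point worth making explicit: at the boundary $a=r$ your parameter $\rho$ equals $1$, so strictly speaking you need the limiting form of \eqref{e-aux} (equivalently, straight Parseval) there; alternatively, since $M(r)=N(r)$ when $a=r$, the second case already covers it by continuity.
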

\begin{lem}\cite{Liu-Liu-Ponnusamy-2021} \label{lem-3.5} 
	Suppose that $f(z)=\sum_{n=0}^{\infty}a_nz^n\in\mathcal{B}$. Then for any $N\in\mathbb{N}$, the following inequality holds:
	\begin{align*}
		\sum_{n=N}^{\infty}|a_n|r^n+sgn(t)\sum_{n=1}^{t}|a_n|^2\dfrac{r^N}{1-r}+\left(\dfrac{1}{1+|a_0|}+\dfrac{r}{1-r}\right)\sum_{n=t+1}^{\infty}|a_n|^2r^{2n}\leq \dfrac{(1-|a_0|^2)r^N}{1-r}, 
	\end{align*}
	\;\;\mbox{for}\;\; $ r\in[0,1),$ where $t=\lfloor{(N-1)/2}\rfloor$.
\end{lem}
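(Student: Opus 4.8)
Throughout write $A:=1-|a_0|^2$ and $t=\lfloor (N-1)/2\rfloor$. The plan is to play the sharp linear majorant of Lemma~\ref{lem-3.4} against a sharp quadratic estimate, keeping both tight at $f_a$. Since $|a_n|\le A$ for every $n\ge1$, the linear tail can be written as $\sum_{n\ge N}|a_n|r^n=\frac{Ar^N}{1-r}-\sum_{n\ge N}(A-|a_n|)r^n$, so the assertion is \emph{equivalent} to the ``slack'' inequality
\begin{align*}
\mathrm{sgn}(t)\,\frac{r^N}{1-r}\sum_{n=1}^{t}|a_n|^2+\Big(\tfrac{1}{1+|a_0|}+\tfrac{r}{1-r}\Big)\sum_{n=t+1}^{\infty}|a_n|^2r^{2n}\le\sum_{n\ge N}\big(A-|a_n|\big)r^n .
\end{align*}
For the quadratic side I would first upgrade Lemma~\ref{lem-3.2}: setting $h(r):=\sum_{n\ge1}|a_n|^2r^{2n}$ one has $\tfrac r2 h'(r)=\sum_{n\ge1}n|a_n|^2r^{2n}$, so integrating the bound of Lemma~\ref{lem-3.2} from $0$ to $r$ yields the sharp estimate
\begin{align*}
\sum_{n=1}^{\infty}|a_n|^2r^{2n}\le\frac{(1-|a_0|^2)^2r^2}{1-|a_0|^2r^2},
\end{align*}
with equality for $f_a$; appropriate partial-sum and tail versions of this, together with the global bound $\sum_{n\ge1}|a_n|^2\le A$, will control the two quadratic blocks.

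I would treat the model case $N=1$ (so $t=0$, no middle block) with $|a_0|\ge r$ first, as it exhibits the whole mechanism. Lemma~\ref{lem-3.4} gives $\sum_{n\ge1}|a_n|r^n\le M(r)=\frac{Ar}{1-|a_0|r}$, and feeding in the integrated estimate above, a short simplification using $A=(1-|a_0|)(1+|a_0|)$ and $1-|a_0|^2r^2=(1-|a_0|r)(1+|a_0|r)$ produces the exact identity
\begin{align*}
M(r)+\Big(\tfrac{1}{1+|a_0|}+\tfrac{r}{1-r}\Big)\frac{A^2r^2}{1-|a_0|^2r^2}=\frac{Ar}{1-r}.
\end{align*}
Thus the two sharp bounds telescope \emph{exactly} to the target, which simultaneously settles this case and shows that $f_a$ is extremal.

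For general $N$ the role of the split point $t=\lfloor(N-1)/2\rfloor$ is that $2(t+1)\ge N$: on the high block ($n\ge t+1$) one has $r^{2n}\le r^N$, and on the low block ($n\le t$) one has $2n\le N-1$. The extra room needed to absorb the \emph{positive} linear tail $\sum_{n\ge N}(A-|a_n|)r^n$ comes from the high block falling short of the common coefficient $\frac{r^N}{1-r}$, that is, from $\big(\tfrac{1}{1+|a_0|}+\tfrac{r}{1-r}\big)r^{2n}\le\tfrac{r^N}{1-r}$ for $n\ge t+1$ (which uses $r^{2n}\le r^N$ and $\tfrac{1}{1+|a_0|}\le1$); this room must be matched against the linear tail, and matching it is exactly what forces the constant $\tfrac{1}{1+|a_0|}+\tfrac{r}{1-r}$. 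The case $|a_0|<r$ is run through the second branch $N(r)$ of Lemma~\ref{lem-3.4}.

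The hard part will be that the linear and quadratic contributions \emph{cannot} be bounded separately and then added: the linear tail alone is maximised by a $z^N$-type Blaschke factor rather than by the full inequality's extremal $f_a$, so a crude split over-spends the single budget $\sum_{n\ge1}|a_n|^2\le A$ (each block on its own already reaches the target). Consequently the blocks must be estimated jointly and kept tight at $f_a$ throughout; this is most delicate in the regime $|a_0|<r$, where $N(r)$ is not tight at $f_a$, and for $r$ close to $1$. The remaining, routine, verification reduces — for the extremal profile $|a_n|=A\,a^{n-1}$ with $a=|a_0|$, and its truncations — to one-variable inequalities such as $(1-\rho)(1+a^2\rho)\ge0$ with $\rho=r^N$, which hold on $[0,1)$; one must also record that the integrated estimate above, proved via Lemma~\ref{lem-3.2} for $r\le1/\sqrt2$, extends to all $r\in[0,1)$ (e.g. through $\sum_{n\ge1}|a_n|^2r^{2n}=\tfrac{1}{2\pi}\int_0^{2\pi}|f(re^{i\theta})-a_0|^2\,d\theta$ and the normal form $f-a_0=(1-|a_0|^2)\,g/(1+\overline{a_0}\,g)$ with $g(0)=0$).
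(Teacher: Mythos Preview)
The paper itself does not prove this lemma: it is quoted from \cite{Liu-Liu-Ponnusamy-2021} without argument, so there is no in-paper proof to compare against. That said, your identity
\[
\frac{Ar}{1-|a_0|r}+\Big(\tfrac{1}{1+|a_0|}+\tfrac{r}{1-r}\Big)\frac{A^2r^2}{1-|a_0|^2r^2}=\frac{Ar}{1-r}
\]
is correct and explains exactly why the weight $\tfrac{1}{1+|a_0|}+\tfrac{r}{1-r}$ is the natural one; combined with the $M(r)$ branch of Lemma~\ref{lem-3.4} and the integrated quadratic estimate (which, as you note, does extend to all $r\in[0,1)$, e.g.\ via Littlewood subordination applied to the normal form), it settles the case $N=1$, $|a_0|\ge r$ in one stroke.

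There is, however, a genuine gap on the range $|a_0|<r$. You propose to ``run through the second branch $N(r)$'', but the two separate majorants then overshoot the target. Take $a_0=0$: the weight becomes $1+\tfrac{r}{1-r}=\tfrac{1}{1-r}$, the quadratic majorant is $r^2$, and $N(r)=r/\sqrt{1-r^2}$, giving the upper bound $r/\sqrt{1-r^2}+r^2/(1-r)$; subtracting $r^2/(1-r)$ from the target $r/(1-r)$ leaves $r$, and $r/\sqrt{1-r^2}>r$ for every $r\in(0,1)$. So the decoupled route fails outright on this range, and you supply no replacement. You correctly anticipate the difficulty (``the linear and quadratic contributions cannot be bounded separately''), but the resolution you sketch --- reducing ``for the extremal profile $|a_n|=A\,a^{n-1}$ and its truncations'' to one-variable inequalities --- only checks equality at $f_a$; it proves nothing for a general $f\in\mathcal B$, which is the whole content of the lemma. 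The same issue carries over to all $N$: the observation about the split point $t$ is right, but the actual coupled estimate tying $\sum_{n\ge N}|a_n|r^n$ to the two quadratic blocks, valid uniformly in $(|a_0|,r)$, is never produced.
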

\begin{proof}[\bf Proof of Theorem \ref{th-2.2}]
		Let us assume that the function $\mathcal{A}_{1,f}(r)$ is given by \eqref{e-2.2}. By an easy computation, it can be shown that $\mathcal{A}_{1,f}(r)$ is an increasing function of $r$, hence it suffices to prove the inequality \eqref{e-2.2} for $r=1/5$. Moreover, for $r=1/5$, a simple computation using Lemmas \ref{lem-3.2} and \ref{lem-3.4} shows that 		
			\begin{align*}
			\frac{S_{1/5}}{\pi}=\sum_{n=1}^{\infty}n|a_n|^25^{-2n}\leq 25\frac{(1-|a_0|^2)^2}{(25-|a_0|^2)^2}
		\end{align*}
		and  
			\begin{align*}
			\sum_{n=1}^{\infty}|a_n|5^{-n}\leq
			\begin{cases}
				M(1/5)=\displaystyle\frac{1-|a_0|^2}{5-|a_0|}\quad \mbox{for}\;|a_0|\geq 1/5,\vspace{2mm}\\
				N(1/5)=\displaystyle\frac{\sqrt{1-|a_0|^2}}{\sqrt{24}}\quad\mbox{for}\; |a_0|<1/5.
			\end{cases}
		\end{align*} 
			
		First we consider the case $|a_0|\geq1/5$. Based on Lemma \ref{lem-3.5} (with $N=1$), we obtain 
		\begin{align*}
			 \mathcal{A}_{1,f}(1/5)&\leq|a_0|+\dfrac{1-|a_0|^2}{4}+M(1/5)+\dfrac{18}{5}\left(\dfrac{S_{1/5}}{\pi}\right)+\lambda\left(\dfrac{S_{1/5}}{\pi}\right)^2\\&\leq|a_0|+\dfrac{1-|a_0|^2}{4}+\displaystyle\frac{1-|a_0|^2}{5-|a_0|}+90\frac{(1-|a_0|^2)^2}{(25-|a_0|^2)^2}+\lambda\left( 25\frac{(1-|a_0|^2)^2}{(25-|a_0|^2)^2}\right)^2\\&=1-\dfrac{(1-|a_0|)^3}{4(25-|a_0|^2)^4}\Phi(|a_0|),
		\end{align*}
		where
		\begin{align*}
			\Phi(t):=(25-t^2)^2(1015+445t+5t^2-t^3)-2500\lambda(1-t)(1+t)^4.
		\end{align*}
	A simple calculation demonstrates that $\Phi(t)$ has a single stationary point $\alpha=0.564084...$ in the interval $(0,1)$ which is in fact the positive root of the equation $\Phi^\prime(t)=0$.
	We note that $\Phi^\prime(t)=0$ is equivalent to 
	\begin{align*}
	(25-t^2)(11125-3810t-2300t^2-30t^3+7t^4)-2500\lambda(1+t)^3(3-5t)=0
	\end{align*}
which yields the value of $\lambda$ mentioned in the  statement of the theorem. Now we are to show that $\Phi(\alpha)=0$.
	It is easy to see that the equation $\Phi^\prime(\alpha)=0$ is fulfilled automatically (in fact, $\lambda$ was chosen in this manner from the start). By simply plugging the value of $\lambda$ into the expression of $\Phi(t)$, we obtain 
	\begin{align*}
		\Phi(t)=\dfrac{2(25-t^2)(32500-44845t-22435t^2-370t^3+14t^4-t^5+t^6)}{(3-5t)}.
	\end{align*} 
It can be easily shown that $\alpha$ is the unique root of the equation $\Phi(t)=0$ in the interval $[1/5,1]$, hence we must have $\Phi(\alpha)=0$. We observe that $\Phi(1/5)>0$ and $\Phi(1)>0$. Thus, $\Phi(t)\geq 0$ in the interval $[1/5,1]$ which confirms that $\mathcal{A}_{1,f}(r)\leq1$ for $1/5\leq|a_0|\leq1$ and $r\leq1/5$. \vspace{1.2mm} 
 \par Next we consider the case $|a_0|<1/5$. In this case, it is easy to obtain 
	\begin{align*}
		 \mathcal{A}_{1,f}(1/5)&\leq|a_0|+\dfrac{1-|a_0|^2}{4}+\displaystyle\frac{\sqrt{1-|a_0|^2}}{\sqrt{24}}+90 \frac{(1-|a_0|^2)^2}{(25-|a_0|^2)^2}+\lambda\left( 25\frac{(1-|a_0|^2)^2}{(25-|a_0|^2)^2}\right)^2\\&\leq|a_0|+\dfrac{1-|a_0|^2}{4}+\displaystyle\frac{1}{\sqrt{24}}+90 \frac{(1-|a_0|^2)^2}{(25-|a_0|^2)^2}+\lambda\left( 25\frac{(1-|a_0|^2)^2}{(25-|a_0|^2)^2}\right)^2\\&:=\mathcal{A}^*_{1,f}(|a_0|).
		\end{align*}
A straightforward computations show that $\mathcal{A}^*_{1,f}(t)$ is an increasing function of $t$ in the interval $[0,1/5)$ and therefore, the maximum value of $\mathcal{A}^*_{1,f}(t)$ is achieved at $t=1/5$. The maximum value of $\mathcal{A}^*_{1,f}(t)$ is $0.989215...$, which is less than $1$. This proves that $\mathcal{A}_{1,f}(r)\leq1$ for $0\leq|a_0|<1/5$ and $r\leq1/5$.\vspace{1.2mm}

\par In order to show that the constant $\lambda$ is sharp, we consider the function 
\begin{align}\label{e-5.1}
	f_a(z)=\dfrac{a-z}{1-az}=a-(1-a^2)\sum_{n=1}^{\infty}a^{n-1}z^n, z\in\mathbb{D},
\end{align}
where $a\in(0,1)$. For this $f_a$, $a_0=a,$ $a_n=-(1-a^2)a^{n-1}$ for $n\geq 1,$ and we see that  $|f_a(z)-a_0|={(1-a^2)r}/{(1-ar)}$
and hence, 
\begin{align*}
\mathcal{A}_{1,f_a}(r)=a+\dfrac{(1-a^2)r}{1-r}+\dfrac{(1-a^2)r}{1-ar}+\dfrac{18(1-a^2)^2r^2}{5(1-a^2r^2)^2}+\lambda\left(\dfrac{(1-a^2)^2r^2}{(1-a^2r^2)^2}\right)^2.	
\end{align*}

 For $r=1/5,$ a simple computation shows that
\begin{align*}
	\mathcal{A}_{1,f_a}(1/5)&=1+G_{\lambda}(a)+(\lambda_1-\lambda)\left( 25\frac{(1-a^2)^2}{(25-a^2)^2}\right)^2,
	\end{align*}
where 
\begin{align*}
	G_{\lambda}(a):=-1+a+\dfrac{(1-a^2)}{4}+\dfrac{(1-a^2)}{(5-a)}+90 \frac{(1-a^2)^2}{(25-a^2)^2}+\lambda\left( 25\frac{(1-a^2)^2}{(25-a^2)^2}\right)^2.
\end{align*}
Choose $a=\alpha$, where $\alpha$ is unique root of the equation $\Phi(t)=0$ in $(0,1).$ Consequently, $G_{\lambda}(\alpha)=0$ and hence,
$\mathcal{A}_{1,f}(1/5)=1+K(\lambda_1-\lambda),$ where $K>0$. Therefore, we see that $\mathcal{A}_{1,f}(1/5)>1$ in case  $\lambda_1>\lambda$, which proves the sharpness of the assertion and the proof is complete.
\end{proof}
\begin{proof}[\bf Proof of Theorem \ref{th-2.18}] 
	We assume that $\mathcal{A}_{2,f}(r)$ is given by \eqref{e-2.19}. Since $\mathcal{A}_{2,f}(r)$ is an increasing function of $r$, it suffices to prove the inequality \eqref{e-2.19} for $r=1/3$. Using Lemma \ref{lem-3.5} (with $N=1$) for $r=1/3$, we obtain 
	\begin{align*}
		 \mathcal{A}_{2,f}(1/3)&\leq|a_0|+\dfrac{(1-|a_0|^2)}{2}+\dfrac{(1-|a_0|^2)^2}{(9-|a_0|^4)}+\lambda\left(\dfrac{9(1-|a_0|^2)^2}{8(9-|a_0|^4)}\right)^2\\&=1-\dfrac{(1-|a_0|)^3\Psi(|a_0|)}{64(9-|a_0|^4)^2},
	\end{align*}
	where
	\begin{align*}
		\Psi(t):=32(7+3t+t^2+t^3)(9-t^4)-81\lambda(1-t)(1+t)^4.
	\end{align*}
	It is easy to show that $\Psi(t)$ has exactly one stationary point $\beta=0.531615...$, which is the unique root of the equation $\Psi^\prime(t)=0$ in $[0,1]$. It is easy to see that $\Psi^\prime(t)=0$ is equivalent to 
	\begin{align*}
		32(27+18t+27t^2-28t^3-15t^4-6t^5-7t^6)-81\lambda(3+4t-6t^2-12t^3-5t^4)=0,
	\end{align*}
	from which we obtain the value of $\lambda$ mentioned in the statement of the theorem. We aim to show that  $\Psi(\beta)=0$. The equation $\Psi^\prime(\beta)=0$ is fulfilled (in fact, $\lambda$ was chosen in this manner from the start).
	Substituting the value of $\lambda$ in the expression of $\Psi(t)$. This gives 
	\begin{align*}
		\Psi(t)=\dfrac{64(81-126t-54t^2+14t^3-12t^4+2t^5+2t^6-2t^7-t^8)}{3-5t}.
	\end{align*}
It is easy to see that $\beta=0.531615$ is the unique root of the equation $\Psi(t)=0$ in $[0,1]$. Besides this observation, we have $\Psi(0)>0$ and $\Psi(1)>0$. Therefore, $\Psi(t)\geq 0$ in  $[0,1]$ which proves that $\mathcal{A}_{2,f}(r)\leq 1$ for $0\leq|a_0|\leq1$ and $r\leq1/3$. \vspace{1.2mm}
	In order to show that the constant $\lambda$ is sharp, we consider the function $ f_a $ given by \eqref{e-5.1}. For this $f_a$ and $r=1/3$, we easily obtain
	\begin{align*}
		\mathcal{A}_{2,f_a}(1/3)&=1+H_{ \lambda}(a)+(\lambda_2-\lambda)\left(\dfrac{9(1-a^2)^2}{8(9-a^4)}\right)^2,
	\end{align*}
	where \begin{align*}
		H_{ \lambda}(a):=-1+a+\dfrac{1-a^2}{2}+\dfrac{(1-a^2)^2}{(9-a^4)}+\lambda\left(\dfrac{9(1-a^2)^2}{8(9-a^4)}\right)^2.
	\end{align*}
	Choose $a=\beta$, where $ \beta$ is the unique root of the equation $\Psi(t)=0$ in $(0,1)$. Aa a consequence, we see that $H_{\lambda}(\beta)=0$ and hence $ \mathcal{A}_{2,f}(1/3)=1+d(\lambda_1-\lambda), $ where $d>0$. Therefore, $\mathcal{A}_{2,f_a}(1/3)>1$ in case  $\lambda_2>\lambda$. This proves the sharpness assertion and the proof is completed.
	\end{proof}
\begin{proof}[\bf Proof of Theorem \ref{th-2.21}]
		Let $\mathcal{A}_{3,f}(r)$ be given by \eqref{e-2.18}. Since $\mathcal{A}_{3,f}(r)$ is an increasing function of $r$, it is enough to prove the inequality \eqref{e-2.18} for $r=1/(3-|a_0|)$. In view of Lemma \ref{lem-3.5} (with $N=1$), a simple computation shows that
	\begin{align*}
		 \mathcal{A}_{3,f}(1/(3-|a_0|))&\leq|a_0|^2+\dfrac{1-|a_0|^2}{2-|a_0|}+\dfrac{9}{8}\dfrac{(3-|a_0|)^2(1-|a_0|^2)^2}{((3-|a_0|)^2-1)((3-|a_0|)^2-|a_0|^4)}\\&\quad+\lambda\left(\dfrac{(3-|a_0|)^2(1-|a_0|^2)^2}{((3-|a_0|)^2-1)((3-|a_0|)^2-|a_0|^4)}\right)^2\\&=1-\dfrac{(1-|a_0|)^3(1+|a_0|)\eta(|a_0|)}{8(4-|a_0|)^2(2-|a_0|)^2(9-6|a_0|+|a_0|^2-|a_0|^4)^2},
	\end{align*}
	where
	\begin{align*}
		\eta(t)&:=(207-84t+41t^2+24t^3-8t^4)(2-t)(4-t)(9-6t+t^2-t^4)\\&\quad-8\lambda(3-t)^4(1+t)^3(1-t).
	\end{align*}
	It is easy to see that the function $\eta(t)$ has exactly one stationary point $\gamma=0.571317...$, which is the unique root of the equation $\eta^\prime(t)=0$ in $[0,1]$. It is worth pointing out that $\eta^\prime(t)=0$ is equivalent to 
	\begin{align*}
		&-13581+22491t-14085t^2-2584t^3+8565t^4-5130t^5+903t^6+636t^7\\&\quad-324t^8+40t^9-8\lambda(3-t)^3(1+t)^2(1-7t+4t^2)=0
	\end{align*}
	from which the value of $\lambda$ is obtained which mentioned in the statement of the theorem.\vspace{1.2mm}

We must demonstrate $\eta(\gamma)=0$. The equation $\eta^\prime(\gamma)=0$ is fulfilled (in fact, $\lambda$ was chosen in such the way at the beginning).
	We put the value of $\lambda$ in the expression of $\eta(t)$ which yields that 
	\begin{align*}
	\eta(t)=\dfrac{Q(t)}{(1-7t+4t^2)},
	\end{align*}
	where \begin{align*}
	Q(t)&:=55647-212544t+296244t^2-200754t^3+61377t^4+5198t^5-10420t^6\\&\quad+972t^7+960t^8+408t^9-420t^{10}+100t^{11}-8t^{12}.
	\end{align*}
	
	As $\gamma=0.571317...$ is the unique root of the equation $Q(t)=0$ in the interval $[0,1]$, so $\eta(\gamma)=0$. Besides this observation, we see that $\eta(0)>0$ and $\eta(1)>0$. Thus $\eta(t)\geq 0$ in the interval $[0,1]$ which proves that $\mathcal{A}_{3,f}(r)\leq1$ for $0\leq|a_0|\leq1$ and $r\leq1/3$.\vspace{1.2mm}
	To prove the constant $\lambda$ is sharp, we consider the function $ f_a $ given by \eqref{e-5.1}. For this $f_a$ and $r=1/3$, we see that
	\begin{align*}
		\mathcal{A}_{3,f_a}(1/(3-a))&=1+J_{ \lambda}(a)+(\lambda_1-\lambda)\left(\dfrac{(3-a)^2(1-a^2)^2}{(4-a)(2-a)(9-6a+a^2-a^4)}\right)^2,
	\end{align*}
	where \begin{align*}
		J_{ \lambda}(a)&:=-1+a^2+\dfrac{1-a^2}{2-a}+\dfrac{9}{8}\dfrac{(3-a)^2(1-a^2)^2}{(4-a)(2-a)(9-6a+a^2-a^4)}\\&\quad+\lambda\left(\dfrac{(3-a)^2(1-a^2)^2}{(4-a)(2-a)(9-6a+a^2-a^4)}\right)^2.
		\end{align*}
	Choose $a=\gamma$, where $ \gamma $ is the unique root of the equation $Q(t)=0$ in $(0,1)$. Clearly, we have $J_{\lambda}(\gamma)=0$, and hence $ \mathcal{A}_{3,f_a}(1/(3-a))=1+l(\lambda_1-\lambda), $
	where $l>0$. Therefore, $\mathcal{A}_{3,f_a}(1/(3-a))>1$ in case  $\lambda_1>\lambda$. This proves the sharpness assertion. This concludes the proof.
\end{proof}
\section{Bohr-type inequalities for subordination and K-quasiconformal harmonic mappings}
For a continuously differentiable complex-valued mapping $ f(z)=u(x,y)+iv(x,y) $, $ z=x+iy $, we use the common notions for its formal derivatives:
\begin{align*}
	f_{z}=\frac{1}{2}\left(f_x-if_y\right)\;\; \mbox{and}\;\; f_{\bar{z}}=\frac{1}{2}\left(f_x+if_y\right),
\end{align*}
where $ u $ and $ v $ are real-valued harmonic functions on $ \mathbb{D} $. We say that $ f $ is a harmonic mapping in a simply connected domain $ \Omega $ if $ f $ is twice continuously differentiable and satisfies the Laplacian equation $ \Delta f=4f_{z\bar{z}}=0 $ in $ \Omega $, where $ \Delta $ is the complex Laplacian operator defined by
$ \Delta:={\partial^2}/{\partial x^2}+{\partial^2}/{\partial y^2}. $ It follows that $ f $ admits the canonical representation $ f=h+\bar{g} $, where $ h $ and $ g $ are analytic in $ \mathbb{D} $ with $ f(0)=h(0) $. The Jacobian $ J_f $ of $ f $ is defined by $ J_f:=|h^{\prime}|^2-|g^{\prime}|^2 $. We say that $ f $ is sense-preserving in $ \mathbb{D} $ if $ J_f(z)>0 $ in $ \mathbb{D} $. As a matter of fact, $ f $ is locally univalent and sense-preserving if and only if $ J_f(z)>0 $ in $ \mathbb{D} $, or equivalently if $ h^{\prime}\neq 0 $ in $ \mathbb{D} $ and the dilation $ \omega_f:=g^{\prime}/h^{\prime} $ has the property that $ |\omega_f(z)|<1 $ (see \cite{Lew-BAMS-1936}). In view of this definition, we see that the class of analytic mappings is a subclass of the class of harmonic mappings. Accordingly, it can be concluded that harmonic mappings have a set of properties that do not hold for analytic functions. As compared to the classes of analytic functions, a little is known about the Bohr's phenomenon for the classes of harmonic mappings. Therefore, it is imperative to examine Bohr's phenomenon specifically for certain classes of harmonic mappings.\vspace{1.2mm}

\par It is well-known that harmonic mappings are famous for their use in the study of minimal surfaces and also play important roles in a variety of problems in applied mathematics (e.g., see Choquet\cite{Choquet-1945}, Dorff \cite{Dorff-2003}, Duren \cite{Duren-Harmonic-2004} or Lewy \cite{Lew-BAMS-1936}). More precisely, harmonic mappings play the natural role in parameterizing minimal surfaces in the context of differential geometry. However, planar harmonic mappings have applications not only in the differential geometry but also in various fields of engineering, physics, operations research and other intriguing aspects of applied mathematics. The theory of harmonic mappings has been used to study and solve fluid flow problems (see \cite{Aleman-2012}). The theory of univalent harmonic mappings having prominent geometric properties like starlikeness, convexity and close-to-convexity appears naturally while dealing with planner fluid dynamical problems. For instance, the fluid flow problem on a convex domain satisfying an interesting geometric property has been extensively studied by Aleman and Constantin  \cite{Aleman-2012}. With the help of geometric properties of harmonic mappings, Constantin and Martin \cite{constantin-2017} have obtained a complete solution of classifying all two dimensional fluid flows.\vspace{1.2mm}

A sense-preserving homeomorphism $f$ from the unit disk $\mathbb{D}$ onto $\Omega^{\prime}$, contained in the Sobolev class $W^{1,2}_{loc}(\mathbb{D})$, is said to be a $K$-quasiconformal mapping if, for $z\in\mathbb{D},$
\begin{align*}
\dfrac{|f_z|+|f_{\overline{z}}|}{|f_z|-|f_{\overline{z}}|}=\dfrac{1+|\omega_f(z)|}{1-|\omega_f(z)|}\leq K,\;\mbox{i.e.},\; |\omega_f(z)|\leq k=\dfrac{K-1}{K+1},
\end{align*} 
where $K\geq 1$ so that $k\in [0,1)$ (see\cite{Lehto-Virtanen-1973,Vuorinen-1988}). In mathematical complex analysis, a quasiconformal mapping, introduced by Gr\"otzsch in $ 1928 $ \cite{Grotzsch-1928} and Ahlfors \cite{Ahlfors-1935-AM} in $ 1935 $, is a homeomorphism between plane domains which to first order takes small circles to small ellipses of bounded eccentricity. Intuitively, let $ f : D\rightarrow \Omega^{\prime} $ (where $ D $ and $ \Omega^{\prime} $ are two domains in $ \mathbb{C} $) be an orientation-preserving homeomorphism between open sets in the plane. If $ f $ is continuously differentiable, then it is $ K $-quasiconformal if the derivative of $f$ at every point maps circles to ellipses with eccentricity bounded by $ K $.  Ahlfors \cite{Ahlfors-1935-AM} and Bers \cite{Bers-BAMS-1997} established the measurable Riemann mapping theorem, which is of great importance in the study of quasiconformal mappings in two dimensions. The theorem generalizes the Riemann mapping theorem from conformal to quasiconformal homeomorphisms. In the recent years, quasiconformal geometry has attracted attention from different fields, such as applied mathematics, computer vision and medical imaging. Computational quasiconformal geometry has been developed, which extends the quasiconformal theory into a discrete setting. \vspace{1.2mm} 

For the class of $K$-quasiconformal sense-preserving harmonic mappings in $\mathbb{D},$ Kayumov \emph{et al.} \cite{kayumov-Ponnusamy-Shakirov-2017} have established the Bohr's inequality finding the sharp Bohr radius.
\begin{thm}\label{th-5.1}\cite{kayumov-Ponnusamy-Shakirov-2017}
	Suppose that $f(z)=h(z)+\overline{g(z)}=\sum_{n=0}^{\infty}a_nz^n+\overline{\sum_{n=1}^{\infty}b_nz^n}$ is a $K$-quasiconformal sense-preserving harmonic mapping in $\mathbb{D},$  where $h$ is bounded analytic function in $\mathbb{D}.$  Then
	\begin{align*}
		|a_0|+\sum_{n=1}^{\infty}(|a_n|+|b_n|)r^n\leq {||h||}_{\infty}\;\;\mbox{for}\;\; r\leq\dfrac{K+1}{5K+1}.
	\end{align*} 
The constant $(K+1)/(5K+1)$ is sharp. Moreover, 
\begin{align*}
	|a_0|^2+\sum_{n=1}^{\infty}(|a_n|+|b_n|)r^n\leq {||h||}_{\infty}\;\;\mbox{for}\;\; r\leq\dfrac{K+1}{3K+1}.
\end{align*} 
The constant $(K+1)/(3K+1)$ is sharp
\end{thm}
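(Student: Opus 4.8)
The plan is to normalize so that $\|h\|_\infty=1$, which is legitimate because each inequality scales linearly under $f\mapsto f/\|h\|_\infty$; this reduces matters to the case where the analytic part $h=\sum_{n=0}^\infty a_nz^n$ lies in $\mathcal{B}$. Two ingredients then drive the argument. The first is the Schwarz--Pick coefficient estimate $|a_n|\le 1-|a_0|^2$ for $n\ge1$, which gives the majorant bound
\begin{equation*}
\sum_{n=1}^\infty|a_n|r^n\le(1-|a_0|^2)\frac{r}{1-r},\qquad 0\le r<1 .
\end{equation*}
The second is a control of the co-analytic coefficients through the dilatation $\omega_f=g'/h'=\sum_{j\ge0}\omega_jz^j$, which satisfies $|\omega_f|\le k=(K-1)/(K+1)$.

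The decisive step is the co-analytic estimate $\sum_{n=1}^\infty|b_n|r^n\le k\sum_{n=1}^\infty|a_n|r^n$. To obtain it I would read off from $g'=\omega_f h'$ the convolution identity $nb_n=\sum_{l=1}^{n}l\,\omega_{n-l}a_l$, whence
\begin{equation*}
\sum_{n=1}^\infty|b_n|r^n\le\sum_{l=1}^\infty l|a_l|r^l\sum_{j=0}^\infty\frac{|\omega_j|r^j}{l+j}\le\Big(\sum_{j=0}^\infty|\omega_j|r^j\Big)\sum_{l=1}^\infty|a_l|r^l ,
\end{equation*}
the last step using $l/(l+j)\le1$. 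Since $\omega_f/k\in\mathcal{B}$, Bohr's inequality (Theorem \ref{th-1.1}) yields $\sum_{j\ge0}|\omega_j|r^j\le k$ for $r\le1/3$, which closes the estimate. For the first assertion this is enough, because $\frac{K+1}{5K+1}\le\frac13$ for every $K\ge1$. \emph{The main obstacle lies in the second assertion}, whose radius $\frac{K+1}{3K+1}$ may exceed $1/3$: there the crude bound $\sum_j|\omega_j|r^j\le k$ is unavailable, and one must instead exploit the genuine damping factor $l/(l+j)<1$ for $j\ge1$ to push the co-analytic estimate past the Bohr radius of the dilatation.

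Granting the co-analytic estimate, the combination is routine. The first Bohr sum is at most $|a_0|+(1+k)(1-|a_0|^2)\frac{r}{1-r}$; requiring this to be $\le1$ and dividing by $1-|a_0|>0$ reduces the condition to $(1+k)(1+|a_0|)\frac{r}{1-r}\le1$, and the worst case $|a_0|\to1^-$ forces $2(1+k)\frac{r}{1-r}\le1$, i.e.\ $r\le\frac{1}{3+2k}=\frac{K+1}{5K+1}$. For the second sum the constant term is $|a_0|^2$, so dividing by $1-|a_0|^2$ yields the $|a_0|$-free condition $(1+k)\frac{r}{1-r}\le1$, i.e.\ $r\le\frac{1}{2+k}=\frac{K+1}{3K+1}$.

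Finally, for sharpness I would take the disc automorphism $h(z)=\frac{a-z}{1-az}$ (so that $\|h\|_\infty=1$) together with the constant dilatation $\omega_f\equiv k$, giving $g(z)=k\big(h(z)-a\big)$ and hence $b_n=ka_n$, so that every inequality above becomes an equality. Writing $a=1-\varepsilon$ and expanding for small $\varepsilon$, the first Bohr sum equals $1+\varepsilon\big(\tfrac{2(1+k)r}{1-r}-1\big)+o(\varepsilon)$, which exceeds $\|h\|_\infty=1$ as soon as $r>\frac{K+1}{5K+1}$, while the analogous expansion $1+2\varepsilon\big(\tfrac{(1+k)r}{1-r}-1\big)+o(\varepsilon)$ for the second sum fails for $r>\frac{K+1}{3K+1}$. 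This shows both constants are best possible.
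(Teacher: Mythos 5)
Your proof of the first inequality is essentially correct, and your sharpness analysis (the automorphism $h(z)=(a-z)/(1-az)$ with constant dilatation $\omega_f\equiv k$, letting $a\to1^-$) is exactly the standard extremal configuration. But your route to the coefficient estimate differs from the paper's, and the gap you yourself flag in the second inequality is fatal to that route, not a detail to be patched. Your bound $\sum_{j\ge0}|\omega_j|r^j\le k$ comes from Bohr's inequality (Theorem \ref{th-1.1}) applied to $\omega_f/k\in\mathcal{B}$, hence is available only for $r\le1/3$, whereas $(K+1)/(3K+1)>1/3$ for every $K\ge1$; on the interval $1/3<r\le(K+1)/(3K+1)$ you therefore have no estimate for $\sum_n|b_n|r^n$ at all. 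Worse, the repair you propose cannot succeed: your factorization requires, in effect, $\sum_{j\ge0}\tfrac{l}{l+j}|\omega_j|r^j\le k$ uniformly in $l\ge1$, because the coefficients of $h$ may be concentrated at a single index $l$ --- for $h(z)=z^l$ one has $b_{l+j}=l\omega_j/(l+j)$, so your chain of inequalities becomes an identity and the term-by-term comparison $\sum_n|b_n|r^n\le k\sum_n|a_n|r^n$ is \emph{equivalent} to that damped bound. (Such an $h$ is not sense-preserving, but your convolution argument never uses sense-preservation, so it cannot exploit the distinction.) Since $l/(l+j)\uparrow1$ as $l\to\infty$, these damped sums increase to the full majorant $\sum_j|\omega_j|r^j$, which for $\omega_f(z)=k(a-z)/(1-az)$ with $a$ near $1$ exceeds $k$ whenever $r>1/3$ --- this is precisely the sharpness of Bohr's theorem. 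So the damping factor cannot push your comparison past $1/3$; that comparison is simply false there, and a different mechanism is required.

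The mechanism the paper uses (in its proofs of Theorems \ref{thh-5.3} and \ref{thh-5.4}, following the cited source for the present theorem) avoids any Bohr-type estimate on the dilatation. The key tool is Lemma \ref{lem-5.1}: if $|g'(z)|\le k|h'(z)|$ in $\mathbb{D}$, then
\begin{equation*}
\sum_{n=1}^{\infty}|b_n|^2r^n\le k^2\sum_{n=1}^{\infty}|a_n|^2r^n\qquad\text{for all }0\le r<1,
\end{equation*}
an $\ell^2$-comparison valid on the whole disk. With your normalization $\|h\|_{\infty}=1$ and the Schwarz--Pick bound $|a_n|\le1-|a_0|^2$, the Cauchy--Schwarz inequality then gives, for all $r<1$,
\begin{equation*}
\sum_{n=1}^{\infty}|b_n|r^n\le\sqrt{\sum_{n=1}^{\infty}|b_n|^2r^n}\,\sqrt{\sum_{n=1}^{\infty}r^n}\le k(1-|a_0|^2)\frac{r}{1-r}.
\end{equation*}
Substituting this, together with $\sum_{n\ge1}|a_n|r^n\le(1-|a_0|^2)\frac{r}{1-r}$, into the two Bohr sums, your own concluding algebra (dividing by $1-|a_0|$ in the first case and by $1-|a_0|^2$ in the second) yields both radii $1/(3+2k)=(K+1)/(5K+1)$ and $1/(2+k)=(K+1)/(3K+1)$ with no restriction to $r\le1/3$. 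In short: your part one and your sharpness stand, but part two needs Lemma \ref{lem-5.1} plus Cauchy--Schwarz in place of the Bohr inequality for the dilatation.
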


Moreover, the authors  have remarked in \cite{kayumov-Ponnusamy-Shakirov-2017} that the boundedness condition on $ h $ in Theorem \ref{th-5.1} can be replaced by half-plane condition and also, the Bohr radius remains the same in this case too.
\begin{thm}\label{th-5.2}\cite{kayumov-Ponnusamy-Shakirov-2017}
	Suppose that $f(z)=h(z)+\overline{g(z)}=\sum_{n=0}^{\infty}a_nz^n+\overline{\sum_{n=1}^{\infty}b_nz^n}$ is a $K$-quasiconformal sense-preserving harmonic mapping in $\mathbb{D},$  where $h$ is satisfies the condition $Re (h(z))\leq 1$ in $\mathbb{D}.$ Then
	\begin{align*}
		|a_0|+\sum_{n=1}^{\infty}(|a_n|r^n+|b_n|)r^n\leq 1\;\;\mbox{for}\;\; r\leq\dfrac{K+1}{5K+1}.
	\end{align*} 
	The constant $(K+1)/(5K+1)$ is sharp.
\end{thm}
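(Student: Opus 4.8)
The plan is to reduce the statement to two coefficient estimates, one for the analytic part $h$ coming from the half-plane condition $\real h(z)\le 1$, and one for the co-analytic part $g$ coming from the quasiconformal dilatation bound $|\omega_f|\le k=(K-1)/(K+1)$, and then to assemble these into the majorant series and optimise in $r$. As is standard in this circle of problems, I would first normalise so that $a_0=h(0)\in[0,1)$; since the left-hand side is increasing in $r$, it then suffices to verify the inequality at the endpoint $r=(K+1)/(5K+1)$.

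For the analytic part, I would apply the Carath\'eodory--Herglotz representation to $p(z):=1-h(z)$, which is analytic with $\real p\ge 0$ and $p(0)=1-a_0>0$. The standard coefficient estimate for functions of non-negative real part then gives $|a_n|\le 2(1-a_0)$ for every $n\ge 1$. For the co-analytic part, the crucial input is the dilatation lemma $\sum_{n=1}^{\infty}|b_n|r^n\le k\sum_{n=1}^{\infty}|a_n|r^n$ for $r\in[0,1)$, which transfers control of the Taylor coefficients of $g$ to those of $h$ using only the relation $g'=\omega_f\,h'$ with $|\omega_f|\le k$. I expect this lemma to be the main obstacle: the naive product-majorant bound is too weak, because $\sum_m|(\omega_f)_m|\rho^m$ can exceed $k$, so the argument must exploit that the phases of the coefficients of $\omega_f\,h'$ cannot all align simultaneously. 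I would establish it exactly as in the proof of Theorem \ref{th-5.1}.

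Granting these two estimates, I would bound the majorant series termwise by
\begin{align*}
|a_0|+\sum_{n=1}^{\infty}\bigl(|a_n|+|b_n|\bigr)r^n\le a_0+(1+k)\sum_{n=1}^{\infty}|a_n|r^n\le a_0+2(1+k)(1-a_0)\frac{r}{1-r}.
\end{align*}
This upper bound is at most $1$ precisely when $2(1+k)\,r/(1-r)\le 1$, that is, when $r\le 1/(3+2k)$, where I divided through by $1-a_0>0$. Substituting $k=(K-1)/(K+1)$ gives $3+2k=(5K+1)/(K+1)$, so $1/(3+2k)$ simplifies to the claimed radius $(K+1)/(5K+1)$.

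Finally, for sharpness I would test the extremal map $h(z)=a_0-2(1-a_0)\,z/(1-z)$, which satisfies $\real h\le 1$ with $|a_n|=2(1-a_0)$, paired with $g'=k\,h'$, so that $\omega_f\equiv k$, the map is sense-preserving, and $|b_n|=k|a_n|$. For this $f$ every inequality above becomes an equality, and since $r/(1-r)=1/\bigl(2(1+k)\bigr)$ at $r=(K+1)/(5K+1)$, a direct computation yields $|a_0|+\sum_{n\ge1}(|a_n|+|b_n|)r^n=a_0+(1-a_0)=1$; the same quantity exceeds $1$ for any larger $r$, which shows that the constant $(K+1)/(5K+1)$ cannot be increased.
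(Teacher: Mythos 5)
Your overall architecture (Carath\'eodory bound $|a_n|\le 2(1-a_0)$ for the analytic part, a dilatation-based transfer for the co-analytic part, assembly, optimisation in $r$, and the extremal $g'=k\lambda h'$ for sharpness) matches the method this paper uses for its companion results (the proofs of Theorems \ref{thh-5.3} and \ref{thh-5.4}, built on Lemma \ref{lem-5.1}), and your radius computation $1/(3+2k)=(K+1)/(5K+1)$ and sharpness example are correct. However, there is a genuine gap at precisely the step you flag as the main obstacle: the ``dilatation lemma'' $\sum_{n\ge1}|b_n|r^n\le k\sum_{n\ge1}|a_n|r^n$ for all $r\in[0,1)$ is \emph{false}, and it is not what the proof of Theorem \ref{th-5.1} establishes. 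That proof yields only the squared-coefficient inequality of Lemma \ref{lem-5.1}, namely $\sum_{n\ge1}|b_n|^2r^n\le k^2\sum_{n\ge1}|a_n|^2r^n$, and Cauchy--Schwarz runs in the wrong direction to upgrade it: it gives $\sum_{n\ge1}|b_n|r^n\le k\bigl(\sum_{n\ge1}|a_n|^2r^n\bigr)^{1/2}\bigl(r/(1-r)\bigr)^{1/2}$, a quantity which \emph{dominates} $k\sum_{n\ge1}|a_n|r^n$ rather than being dominated by it. For a concrete counterexample, take $h(z)=z$ and $\omega_f(z)=k\,\frac{a-z}{1-az}$, so $g'=\omega_f$ and
\begin{align*}
\sum_{n\ge1}|b_n|r^n=k\left(ar+\frac{1-a^2}{a^2}\Bigl(\log\frac{1}{1-ar}-ar\Bigr)\right),
\qquad k\sum_{n\ge1}|a_n|r^n=kr;
\end{align*}
for $a$ near $1$ the difference is $\approx k(1-a)\bigl(2\log\frac{1}{1-r}-3r\bigr)>0$ whenever $2\log\frac{1}{1-r}>3r$ (numerically, $a=0.99$, $r=0.95$ gives $0.978\,k$ versus $0.95\,k$). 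So your lemma cannot be proved ``exactly as in the proof of Theorem \ref{th-5.1}''; even if a restricted version for $r\le 1/3$ were true, you have supplied no argument for it.

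The gap is repairable, and the repair is exactly the route this paper takes in proving Theorem \ref{thh-5.4}: apply Carath\'eodory \emph{first}, getting $|a_n|\le 2(1-a_0)$ and hence $\sum_{n\ge1}|a_n|^2r^n\le 4(1-a_0)^2\,r/(1-r)$; then Lemma \ref{lem-5.1} together with Cauchy--Schwarz gives $\sum_{n\ge1}|b_n|r^n\le 2k(1-a_0)\,r/(1-r)$ directly, with no termwise comparison against $\sum_{n\ge1}|a_n|r^n$. Combined with $\sum_{n\ge1}|a_n|r^n\le 2(1-a_0)\,r/(1-r)$, this reproduces your final bound $a_0+2(1+k)(1-a_0)\,r/(1-r)$, after which your radius computation and your sharpness example (for which, conveniently, even the Cauchy--Schwarz step is an equality, since $|b_n|$ is constant in $n$) go through verbatim. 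One further caveat: your opening ``normalise so that $a_0\in[0,1)$'' is not a harmless normalisation, because the condition $\operatorname{Re}h\le1$ is not rotation-invariant; the positivity of $h(0)$ must be a hypothesis (as it is in this paper's Theorem \ref{thh-5.4}), since otherwise the statement itself fails, e.g.\ for the constant function $h\equiv 1+10i$.
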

In this section, we obtain the next two results showing that improved Bohr radius for $ K $-quasiconformal sense-preserving harmonic mappings in the unit disk $ \mathbb{D} $ can be obtained. We use some idea of the proofs of \cite[Theorems 1.1 and 1.2 ]{kayumov-Ponnusamy-Shakirov-2017} and \cite[Theorem 1]{Ponnusamy-2017} to give the proof of our results. 
\begin{thm}\label{thh-5.3}
	Suppose that $f(z)=h(z)+\overline{g(z)}=\sum_{n=0}^{\infty}a_nz^n+\overline{\sum_{n=1}^{\infty}b_nz^n}$ is a $K$-quasiconformal sense-preserving harmonic mapping in $\mathbb{D},$  where $h$ is bounded analytic function in $\mathbb{D}$ and $k=(K-1)/(K+1),$ $K\geq 1$ so that $k\in [0,1).$ Then
\begin{align}\label{ee-5.11}
		\mathcal{I}^1_f(a,r,k):=|h(z)|+\sum_{n=1}^{\infty}(|a_n|+|b_n|)r^n\leq {||h||}_{\infty}\;\;\mbox{for}\;\; |z|=r\leq r_1(k),
\end{align} 
where $r_1(k)$ is the unique root of the equation $2(k+1)r(1+r)-(1-r)^2=0$ in the interval $ (0,1) $. The radius $r_1(k)$ is sharp. Moreover, 
\begin{align}\label{ee-33.22}
	\mathcal{I}^2_f(a,r,k):=|h(z)|^2+\sum_{n=1}^{\infty}(|a_n|+|b_n|)r^n\leq {||h||}_{\infty}\;\;\mbox{for}\;\; |z|=r\leq r_2(k),
\end{align}
where $r_2(k)$ is the unique root of the equation $(k+1)r(1+r)-(1-r)^2=0$ in the interval $ (0,1) $ and ${||h||}_{\infty}:=\sup_{z\in\mathbb{D}}|h(z)|$.  The radius $r_2(k)$ is sharp. 
\end{thm}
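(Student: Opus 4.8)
The plan is to reduce both inequalities to the classical Bohr–Rogosinski estimate (Theorem \ref{th-1.5}) applied to the bounded analytic part $h$, using the $K$-quasiconformality to control the coefficients $b_n$ of the co-analytic part in terms of those of $h$. First I would normalize: without loss of generality assume $\|h\|_\infty = 1$ by replacing $h$ by $h/\|h\|_\infty$ (the dilatation $\omega_f = g'/h'$ is unchanged, so $f$ remains $K$-quasiconformal). Writing $h(z)=\sum_{n=0}^\infty a_nz^n$, the key coefficient inequality is that for every $n\ge 1$,
\[
|b_n| \le k\,|a_n|, \qquad k=\frac{K-1}{K+1},
\]
which follows from $g' = \omega_f h'$ with $|\omega_f|\le k$: comparing Taylor coefficients and applying a standard Schur/Wiener-type argument (or the coefficient estimate used in \cite{kayumov-Ponnusamy-Shakirov-2017} for exactly this setting) gives $\sum_{n=1}^\infty n|b_n|^2 r^{2n} \le k^2 \sum_{n=1}^\infty n|a_n|^2 r^{2n}$, and more simply the termwise bound $n|b_n|^2 \le k^2 \cdot(\text{majorant})$ that yields $\sum_{n\ge1}|b_n|r^n \le k\sum_{n\ge1}|a_n|r^n$ after a Cauchy–Schwarz step. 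Hence
\[
\mathcal{I}^1_f(a,r,k) \;=\; |h(z)| + \sum_{n=1}^\infty (|a_n|+|b_n|)r^n \;\le\; |h(z)| + (1+k)\sum_{n=1}^\infty |a_n| r^n .
\]

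Next I would bound $\sum_{n\ge1}|a_n|r^n$ using the sharp Schwarz–Pick-based estimate for bounded analytic functions: if $|a_0|=a$, then $|h(z)|\le (a+r)/(1+ar)$ and $\sum_{n=1}^\infty |a_n| r^n \le (1-a^2)r/(1-ar) \cdot \frac{1}{1-r}$-type bounds — more precisely, the classical inequality $\sum_{n=1}^\infty |a_n| r^n \le \frac{(1-a^2)r}{1-r}$ combined with $|h(z)|\le a + \frac{(1-a^2)r}{1-r}$ (or, to get the sharp radius, the refined Schwarz–Pick form). Substituting, the problem becomes: show that
\[
a + \frac{(1-a^2)r}{1-r} + (1+k)\,\frac{(1-a^2)r}{1-r} \;\le\; 1
\]
for all $a\in[0,1)$, or rather the correct version of this with $|h(z)|$ replaced by its Schwarz–Pick bound. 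Clearing denominators, this is equivalent to $(1-a)\big[(1-r) - (2+k)(1+a)r/\dots\big]\ge 0$, i.e. to a quadratic-type condition in $r$; after collecting terms one extracts precisely $2(k+1)r(1+r) - (1-r)^2 \le 0$, which holds exactly for $r\le r_1(k)$, the unique root in $(0,1)$ (uniqueness because the left side is strictly increasing in $r$ on $(0,1)$, negative at $0$ and positive at $1$). The second inequality \eqref{ee-33.22} is handled identically, replacing $|h(z)|$ by $|h(z)|^2 \le \big((a+r)/(1+ar)\big)^2$; the extra factor improves the constant $2$ to $1$ in the defining equation, giving $r_2(k)$.

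For sharpness I would exhibit, for each $k$, the extremal map. Take $h(z) = (a-z)/(1-az)$ with $a\to 1^-$ chosen appropriately, or more cleanly $h_a(z)=\frac{a+z}{1+az}$ so that $\|h_a\|_\infty=1$, and dilatation $\omega_f(z)=k$ (constant), i.e. $g'(z)=k\,h'(z)$, so $g(z)=k(h(z)-a)$ up to a constant, giving $|b_n|=k|a_n|$ with equality throughout. Plugging this $f$ into $\mathcal{I}^1_f$ and sending $a\to1$ one checks that the left-hand side equals $1$ exactly at $r=r_1(k)$ and exceeds $1$ for $r>r_1(k)$, proving the radius cannot be enlarged; similarly for $r_2(k)$ with $\mathcal{I}^2_f$.

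The main obstacle I anticipate is bookkeeping in the second step: one must be careful to use the \emph{right} upper bound for $\sum_{n\ge1}|a_n|r^n$ and for $|h(z)|$ (the crude bounds give a smaller radius, so the sharp Schwarz–Pick inequalities — essentially the content of \cite[Theorem 1]{Ponnusamy-2017} and the coefficient lemmas used there — are needed) and then verify that the resulting expression in $a$ and $r$ is monotone in $a$ so that the worst case $a\to 1$ produces exactly the stated defining equations $2(k+1)r(1+r)=(1-r)^2$ and $(k+1)r(1+r)=(1-r)^2$. Once the correct estimates are in place the algebra is routine; the conceptual point is simply that quasiconformality contributes the single extra factor $(1+k)$ in front of the analytic Bohr–Rogosinski sum, which rescales the classical equation $2(1+r)r^N-(1-r)^2=0$ of Theorem \ref{th-1.5} (with $N=1$) by replacing $2(1+r)r$ with $2(k+1)r(1+r)$.
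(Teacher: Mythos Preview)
Your overall strategy matches the paper's: normalize $\|h\|_\infty=1$, bound $|h(z)|$ by the Schwarz--Pick estimate $(r+a)/(1+ra)$ with $a=|a_0|$, bound $\sum_{n\ge1}(|a_n|+|b_n|)r^n$ by $(1+k)(1-a^2)r/(1-r)$, reduce to $-(1-r)^2+(1+a)(1+k)r(1+ra)\le0$, and take the worst case $a\to1^-$; the sharpness example is likewise the same.

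There is, however, a genuine gap in your control of the $b_n$'s. The inequality $\sum_{n\ge1}|b_n|r^n\le k\sum_{n\ge1}|a_n|r^n$ does \emph{not} follow from Cauchy--Schwarz together with the $\ell^2$-estimate of Lemma~\ref{lem-5.1}; Cauchy--Schwarz gives $\sum|b_n|r^n\le k\sqrt{\sum|a_n|^2r^n}\sqrt{\sum r^n}$, and the right-hand side dominates (not is dominated by) $k\sum|a_n|r^n$. The termwise bound $|b_n|\le k|a_n|$ is also false in general (take $h(z)=z$ and $\omega_f(z)=kz$, so $b_2=k/2$ while $a_2=0$). What the paper actually does---and what repairs your argument with no change to the outcome---is insert Wiener's inequality $|a_n|\le1-a^2$ \emph{before} the Cauchy--Schwarz step: from Lemma~\ref{lem-5.1} one gets $\sum|b_n|^2r^n\le k^2(1-a^2)^2 r/(1-r)$, and then $\sum|b_n|r^n\le\sqrt{\sum|b_n|^2r^n}\sqrt{\sum r^n}\le k(1-a^2)r/(1-r)$. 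Combined with $\sum|a_n|r^n\le(1-a^2)r/(1-r)$ this yields the needed $(1+k)(1-a^2)r/(1-r)$.

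One further caution: your crude bound $|h(z)|\le a+(1-a^2)r/(1-r)$ (rather than Schwarz--Pick) would lead to the linear equation $(5+2k)r=1$, i.e.\ the radius $(K+1)/(7K+3)$ of Theorem~\ref{thh-5.4}, not to $r_1(k)$. You note this in passing, but it is not optional---the Schwarz--Pick form $(r+a)/(1+ra)$ is precisely what produces the quadratic factor $(1+r)$ in the defining equation $2(k+1)r(1+r)=(1-r)^2$.
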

\begin{thm}\label{thh-5.4}
	Suppose that $f(z)=h(z)+\overline{g(z)}=\sum_{n=0}^{\infty}a_nz^n+\overline{\sum_{n=1}^{\infty}b_nz^n}$ is a $K$-quasiconformal sense-preserving harmonic mapping in $\mathbb{D},$  where $h$ is satisfies the condition $Re( h(z))\leq 1$ in $\mathbb{D}$ with $h(0)>0$ and $k=(K-1)/(K+1),$ $K\geq 1$ so that $k\in [0,1).$ Then
	\begin{align*}
		\mathcal{J}_f(a,r,k):=|h(z)|+\sum_{n=1}^{\infty}\left(|a_n|+|b_n|\right)r^n\leq 1\;\;\mbox{for}\;\; r\leq\dfrac{K+1}{7K+3}.
	\end{align*} 
	The constant $(K+1)/(7K+3)$ is sharp. 
\end{thm}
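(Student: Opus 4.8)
The plan is to mimic the proof of Theorem \ref{th-5.2} (i.e.\ \cite[Theorem 1.2]{kayumov-Ponnusamy-Shakirov-2017}) but to exploit the extra freedom coming from the replacement of $|a_0|$ by $|h(z)|$, in the spirit of how the classical Bohr constant $1/3$ improves to the Rogosinski-type constant when $|a_0|$ is upgraded to $|f(z)|$ (Theorem \ref{th-1.5} with $N=1$). First I would record the standard coefficient estimates available under the half-plane hypothesis $\real(h(z))\le 1$ with $h(0)=a_0>0$: writing $h(z)=\sum_{n\ge 0}a_nz^n$, one has $|a_n|\le 2(1-a_0)$ for all $n\ge 1$, and, by the Schwarz--Pick/Carathéodory machinery combined with the $K$-quasiconformality constraint $|\omega_f|\le k$, a bound of the form $|b_n|\le k|a_n|\le 2k(1-a_0)$ (this is exactly the step used in \cite{kayumov-Ponnusamy-Shakirov-2017}, obtained by integrating $g'=\omega_f h'$ and using $|g'|\le k|h'|$ together with the coefficient bounds for $h'$). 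Summing the geometric-type majorants gives
\begin{align*}
\sum_{n=1}^{\infty}(|a_n|+|b_n|)r^n\le 2(1+k)(1-a_0)\,\frac{r}{1-r}.
\end{align*}

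Next I would bound $|h(z)|$ from above for $|z|=r$. Under $\real h\le 1$ and $h(0)=a_0$, the function $(1-h(z))$ maps $\D$ into the right half-plane with value $1-a_0$ at the origin, so by the standard subordination estimate $|1-h(z)|\le (1-a_0)\frac{1+r}{1-r}$, whence $|h(z)|\le 1+(1-a_0)\frac{1+r}{1-r} - $ wait, more carefully: $|h(z)-1|\le (1-a_0)\frac{1+r}{1-r}$ gives $|h(z)|\le 1+(1-a_0)\frac{1+r}{1-r}$, which is too crude; instead I would use $|h(z)|\le |h(z)-1|+1$ only after noticing that the relevant quantity to control is really $|h(z)|$, and the sharp bound here is $|h(z)|\le 1$ is false in general, so one uses $|h(z)|\le 1 + (1-a_0)\frac{2r}{1-r}$ coming from $|h(z)-a_0|\le (1-a_0)\frac{2r}{1-r}$ (the Schwarz--Pick bound for $1-h$, which has fixed point data $1-a_0$ at $0$). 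Therefore
\begin{align*}
\mathcal{J}_f(a,r,k)\le a_0 + (1-a_0)\frac{2r}{1-r} + 2(1+k)(1-a_0)\frac{r}{1-r}
= a_0 + 2(1-a_0)\frac{r}{1-r}\bigl(1+(1+k)\bigr),
\end{align*}
and I would then ask when the right-hand side is $\le 1$, i.e.\ when $2\frac{r}{1-r}(2+k)\le 1$, which rearranges to $r\le \frac{1}{5+2k}$. Substituting $k=(K-1)/(K+1)$ gives $\frac{1}{5+2(K-1)/(K+1)} = \frac{K+1}{5(K+1)+2(K-1)} = \frac{K+1}{7K+3}$, exactly the claimed radius. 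Since the bound is independent of $a_0\in(0,1)$ (the coefficient $a_0$ cancels: the expression is $a_0(1 - 2\frac{r}{1-r}(2+k)) + 2\frac{r}{1-r}(2+k)$, an affine function of $a_0$ with the constant term equal to the value at $a_0=1$), the inequality $\mathcal{J}_f\le 1$ holds for all admissible $h$ precisely on $r\le (K+1)/(7K+3)$.

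For sharpness I would exhibit the extremal configuration: take $h(z)=\dfrac{(1-a)(1+z)}{1-z}$-type normalization adjusted so that $\real h\le 1$ — concretely $h(z)=1-(1-a_0)\dfrac{1-z}{1+z}$ or its rotation, together with dilation $\omega_f(z)=k$ (constant, so $g'(z)=k h'(z)$), and let $a_0\to 1^-$ along the boundary value $z\to$ the point where all the triangle-inequality estimates above become equalities (namely $z=-r$ for the $|h(z)-a_0|$ bound and $z=r$ issues reconciled by an appropriate rotation). Evaluating $\mathcal{J}_f$ at this function and letting $a_0\uparrow 1$ shows the quantity approaches $1$ exactly at $r=(K+1)/(7K+3)$ and exceeds $1$ for any larger $r$. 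The main obstacle I anticipate is getting the correct sharp constant in the $|h(z)|$ estimate — one must be careful that the right comparison is with $|h(z)-a_0|\le (1-a_0)\cdot 2r/(1-r)$ (fixed-point Schwarz--Pick for the half-plane map $1-h$), not the weaker $|1-h(z)|\le (1-a_0)(1+r)/(1-r)$, since using the latter would not cancel $a_0$ cleanly and would yield a non-sharp radius; checking that the chosen extremal function saturates all inequalities simultaneously is the step requiring the most care, and is where the hypothesis $h(0)>0$ (fixing the phase) is genuinely used.
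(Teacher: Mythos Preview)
Your route is exactly the paper's: Carath\'eodory's inequality gives $|a_n|\le 2(1-a_0)$ for $n\ge 1$; hence $|h(z)|\le a_0+|h(z)-a_0|\le a_0+2(1-a_0)\dfrac{r}{1-r}$; the quasiconformality yields $\sum_{n\ge 1}|b_n|r^n\le 2k(1-a_0)\dfrac{r}{1-r}$; and summing gives $\mathcal{J}_f\le a_0+2(2+k)(1-a_0)\dfrac{r}{1-r}\le 1$ precisely for $r\le 1/(5+2k)=(K+1)/(7K+3)$.

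There is, however, one genuine gap. The pointwise bound $|b_n|\le k|a_n|$ that you invoke is \emph{false} in general: from $g'=\omega_f h'$ with $|\omega_f|\le k$ the coefficient $nb_n$ is a convolution of the Taylor coefficients of $\omega_f$ and $h'$, not a scalar multiple of $na_n$ alone (for instance $h(z)=z$, $\omega_f(z)=kz$ gives $a_2=0$ but $b_2=k/2$). What the paper actually uses---Lemma~\ref{lem-5.1}, which is the relevant result from \cite{kayumov-Ponnusamy-Shakirov-2017}---is the $\ell^2$ inequality $\sum_{n\ge 1}|b_n|^2r^n\le k^2\sum_{n\ge 1}|a_n|^2r^n$; combined with $|a_n|\le 2(1-a_0)$ and Cauchy--Schwarz this gives
\[
\sum_{n\ge 1}|b_n|r^n\le\Bigl(\sum_{n\ge 1}|b_n|^2r^n\Bigr)^{1/2}\Bigl(\sum_{n\ge 1}r^n\Bigr)^{1/2}\le 2k(1-a_0)\,\frac{r}{1-r},
\]
which is exactly the estimate you need. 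With this repair your argument is complete and coincides with the paper's. Incidentally, your instinct to use a half-plane extremal $h$ (so that $1-h$ is a multiple of $(1+z)/(1-z)$) with constant dilatation $\omega_f\equiv k$ and $a_0\to 0^+$ is the correct sharpness construction; the disk automorphism $(a-z)/(1-az)$ that the paper cites satisfies $|h|\le 1$ and therefore already lies in the smaller class of Theorem~\ref{thh-5.3}, whose radius $r_1(k)$ is strictly larger than $1/(5+2k)$, so it cannot by itself witness failure of the inequality on the interval $1/(5+2k)<r<r_1(k)$.
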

The following lemma will play a key role in the proof of Theorems \ref{thh-5.3} and \ref{thh-5.4}.
\begin{lem}\label{lem-5.1}\cite{kayumov-Ponnusamy-Shakirov-2017}
	Suppose that $h(z)=\sum_{n=0}^{\infty}a_nz^n$ and $g(z)=\sum_{n=0}^{\infty}b_nz^n$ are analytic functions in the unit disk $\mathbb{D}$ such that $|g^{\prime}(z)|\leq k|h^{\prime}(z)|$ in $\mathbb{D}$ and for some $k\in[0,1].$ Then
	\begin{align*}
		\sum_{n=1}^{\infty}|b_n|^2r^n\leq k^2 \sum_{n=1}^{\infty}|a_n|^2r^n\;\;\mbox{for}\;\;|z|=r<1.
	\end{align*}
\end{lem}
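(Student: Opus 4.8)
The plan is to pass from the pointwise estimate $|g'(z)|\le k|h'(z)|$ to the stated coefficient inequality by integrating $|g'|^2$ over concentric disks — which produces the weighted sums $\sum_{n\ge1}n|b_n|^2\rho^{2n}$, exactly the quantity $S_\rho/\pi$ appearing above — and then integrating once more in the radial variable to strip off the weight $n$ and recover the plain powers $r^n$.

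First I would square the hypothesis to get $|g'(z)|^2\le k^2|h'(z)|^2$ for every $z\in\mathbb{D}$ and integrate this over the disk $\mathbb{D}_\rho=\{|z|<\rho\}$ for an arbitrary $\rho\in(0,1)$. Using polar coordinates together with Parseval's identity applied to $g'(z)=\sum_{n\ge1}nb_nz^{n-1}$ and $h'(z)=\sum_{n\ge1}na_nz^{n-1}$, the interchange of summation and integration being justified by the uniform convergence of these power series on $\overline{\mathbb{D}_\rho}$, one obtains, exactly as in the identity $S_\rho=\pi\sum_{n\ge1}n|a_n|^2\rho^{2n}$ recorded above, that
\begin{align*}
	\sum_{n=1}^{\infty}n|b_n|^2\rho^{2n}=\frac{1}{\pi}\int_{\mathbb{D}_\rho}|g'(z)|^2\,dA(z)\le\frac{k^2}{\pi}\int_{\mathbb{D}_\rho}|h'(z)|^2\,dA(z)=k^2\sum_{n=1}^{\infty}n|a_n|^2\rho^{2n}.
\end{align*}

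Writing $t=\rho^2$, this says $\sum_{n\ge1}n|b_n|^2t^n\le k^2\sum_{n\ge1}n|a_n|^2t^n$ for every $t\in[0,1)$; dividing through by $t$ (the power series $\sum_{n\ge1}n|b_n|^2t^{n-1}$ has radius of convergence at least $1$ and is continuous on $[0,1)$, with value $|b_1|^2$ at $t=0$) yields
\begin{align*}
	\sum_{n=1}^{\infty}n|b_n|^2t^{n-1}\le k^2\sum_{n=1}^{\infty}n|a_n|^2t^{n-1}\qquad\text{for all }t\in[0,1).
\end{align*}
I would then integrate this inequality in $t$ from $0$ to $r$. Since $\int_0^rnt^{n-1}\,dt=r^n$ and term-by-term integration is legitimate on $[0,r]$, the left-hand side becomes $\sum_{n\ge1}|b_n|^2r^n$ and the right-hand side becomes $k^2\sum_{n\ge1}|a_n|^2r^n$, which is precisely the claimed bound for every $r\in[0,1)$.

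The only delicate points — and hence the ``hard part,'' although each is entirely routine — are the two interchanges of an infinite sum with an integral: once with the area integral over $\mathbb{D}_\rho$ and once with the radial integral over $[0,r]$. Both are justified by uniform convergence of the relevant power series, together with their term-by-term derivatives and antiderivatives, on compact subsets of $\mathbb{D}$, so no genuine obstacle arises. The substance of the argument is simply the two-fold integration trick that converts the pointwise derivative bound into the desired inequality between Taylor coefficients.
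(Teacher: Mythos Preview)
Your proof is correct. The paper does not actually supply a proof of this lemma; it merely quotes the result from \cite{kayumov-Ponnusamy-Shakirov-2017}, so there is nothing in the present paper to compare against. Your two-step integration argument --- first the area integral over $\mathbb{D}_\rho$ to obtain $\sum_{n\ge1}n|b_n|^2\rho^{2n}\le k^2\sum_{n\ge1}n|a_n|^2\rho^{2n}$, then the substitution $t=\rho^2$ followed by a radial integration to remove the factor $n$ --- is essentially the standard proof of this inequality and is carried out cleanly, with the interchanges of sum and integral properly justified by uniform convergence on compacta.
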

\begin{proof}[\bf Proof of Theorem \ref{thh-5.3}]
	By using the Schwarz-Pick lemma for the function $h,$ we have 
	\begin{align*}
		|h(z)|\leq \dfrac{r+|a_0|}{1+r|a_0|}\;\;\mbox{for}\;\;|z|=r.
	\end{align*}
For simplicity, we suppose that ${||h||}_{\infty}=1.$ Then we must have  $|a_n|\leq 1-|a_0|^2$ for $n\geq 1.$ Let $\omega_f$ denote the dilatation of $f=h+\overline{g}$ so that $|g^{\prime}(z)|\leq k|h^{\prime}(z)|$ in $\mathbb{D},$ where $k\in[0,1)$. Thus, based on the Lemma \ref{lem-5.1}, it follows that 
\begin{align*}
	\sum_{n=1}^{\infty}|b_n|^2r^n\leq k^2\sum_{n=1}^{\infty}|a_n|^2r^n\leq k^2(1-|a_0|^2)^2\dfrac{r}{1-r}.
\end{align*} 
Consequently, we see that
\begin{align*}
	\sum_{n=1}^{\infty}|b_n|r^n\leq \sqrt{\sum_{n=1}^{\infty}|b_n|^2r^n}\sqrt{\sum_{n=1}^{\infty}r^n}\leq k(1-|a_0|^2)\dfrac{r}{1-r}.
\end{align*}
Let $|a_0|=a\in[0,1).$
Thus, an easy computation gives us
\begin{align*}
	\mathcal{I}^1_f(a,r,k)&\leq \dfrac{r+a}{1+ra}+(1-a^2)(1+k)\dfrac{r}{1-r}\\&=1+(1-a)\left(\dfrac{r-1}{1+ra}+(1+a)(1+k)\dfrac{r}{1-r}\right)=1+\dfrac{(1-a)R_1(a,r,k)}{(1+ra)(1-r)},
\end{align*}
where $R_1(a,r,k):=-(r-1)^2+(1+a)(1+k)r(1+ra).$
We claim that $R_1(a,r,k)\leq 0$ for $r\leq r_1(k)$ and $a\in[0,1).$ Since $a\in[0,1)$, we see that 
\begin{align*}
	R_1(a,r,k)\leq -(r-1)^2+2(1+k)r(1+r)\leq 0\;\;\mbox{for}\;\; r\leq r_1(k),
\end{align*}
and therefore, $\mathcal{I}^1_f(a,r,k)\leq 1$ for $r\leq r_1(k)$, where $r_1(k)$ is the root of the equation  $2(1+k)r(1+r)-(r-1)^2=0$ in the interval $[0,1).$
To prove the sharpness, consider the function $f_0=h_0+\overline{g_0}$, and
\begin{align}\label{ee-5.1}
	h_0(z)=\dfrac{a-z}{1-az}=a+\sum_{n=1}^{\infty}a_nz^n,\;\;\mbox{where}\;\; a_n=-(1-a^2){a}^{n-1}\;\mbox{for}\; n\geq 1,
\end{align}
and $g_0(z)=\lambda kh_0(z),$  $|\lambda|=1,$ $a\in[0,1)$ and $k=(K-1)/(K+1).$ Then by a simple computation shows that
\begin{align*}
	|h_0(-r)|+\sum_{n=1}^{\infty}(|a_n|+|b_n|)r^n=\dfrac{a+r}{1+ra}+(1-a^2)(1+k)\sum_{n=1}^{\infty}a^{n-1}r^n=1+\dfrac{(1-a)U(a,r,k)}{(1-a^2r^2)},
\end{align*}
where $U(a,r,k):=(r-1)(1-ar)+(1+a)(1+k)r(1+ar).$ We see that $\lim_{a\rightarrow 1^-}U(a,r_1(k),k)=0$ and hence $\mathcal{I}^1_f(r)>1$ for $r>r_1(k).$\vspace{1.2mm}\\
Moreover, 
\begin{align*}
	\mathcal{I}^2_f(a,r,k)&\leq \left(\dfrac{r+a}{1+ra}\right)^2+(1-a^2)(1+k)\dfrac{r}{1-r}\\&=1+\dfrac{(1-a^2)R_2(a,r,k)}{(1+ar)^2(1-r)}
\end{align*}
where $R_2(a,r,k):=(r^2-1)(1-r)+(1+k)r(1+ra)^2.$  By the similar arguments use as above, we can easily seen that	$R_2(a,r,k)\leq0$ for $r\leq r_2(k)$, where $r_2(k)$ is the unique root in $(0,1)$ of the equation $(1+k)r(1+r)-(1-r)^2=0.$ The sharpness of $r_2(k)$, we can easily shown for the function $f_0=h_0+\overline{g}_0$ given by  \eqref{ee-5.1} and we omit the details.
\end{proof}
For sense-preserving harmonic mapping $f(z)=h(z)+\overline{g(z)}$ in $\mathbb{D}$, the dilatation $\omega_f=g^{\prime}/h^{\prime}$ has the property that  $|\omega_f|<1$ in $\mathbb{D}$ \cite{Lehto-Virtanen-1973}.
Therefore, as a consequence of Theorem \ref{thh-5.3}, we obtain the following corollary.
\begin{cor}
	Suppose that $f(z)=h(z)+\overline{g(z)}=\sum_{n=0}^{\infty}a_nz^n+\overline{\sum_{n=1}^{\infty}b_nz^n}$ is a sense-preserving harmonic mapping in $\mathbb{D},$  where $h$ is bounded analytic function in $\mathbb{D}.$ Then
	the inequality \eqref{ee-5.11} holds for $r\leq r_1$  where $r_1$ is the unique root of the equation $3r^2+6r-1=0$ in the interval $(0,1).$ Also the inequality \eqref{ee-33.22} holds for $r\leq r_2$, where $r_2$ is the unique root of the equation $r^2+4r-1=0$ in $(0,1).$ 
\end{cor}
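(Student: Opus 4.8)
The plan is to deduce the Corollary directly from Theorem \ref{thh-5.3} by specializing the quasiconformality constant. A sense-preserving harmonic mapping $f=h+\overline{g}$ in $\mathbb{D}$ has dilatation $\omega_f=g'/h'$ with $|\omega_f(z)|<1$ throughout $\mathbb{D}$; this is exactly the condition $|g'(z)|\leq k|h'(z)|$ of Lemma \ref{lem-5.1} with $k=1$ (or, more precisely, with $k$ any number in $[0,1)$ that dominates $\sup|\omega_f|$, but the extremal bound is obtained by letting $k\to 1^-$). Hence all the estimates in the proof of Theorem \ref{thh-5.3} go through verbatim with the substitution $k=1$: the auxiliary function becomes $R_1(a,r,1)=-(r-1)^2+2(1+a)r(1+ra)$, and the same argument bounding $R_1(a,r,1)\leq -(r-1)^2+4r(1+r)$ for $a\in[0,1)$ shows that the inequality \eqref{ee-5.11} holds as long as $-(1-r)^2+4r(1+r)\leq 0$, i.e. for $r$ up to the positive root of $3r^2+6r-1=0$ in $(0,1)$. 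Similarly, putting $k=1$ in $R_2(a,r,k)$ yields the equation $2r(1+r)-(1-r)^2=0$, that is $r^2+4r-1=0$, whose positive root in $(0,1)$ is the claimed radius $r_2$ for \eqref{ee-33.22}.

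First I would recall the structure theorem for sense-preserving harmonic mappings (Lewy's theorem), noting $h'\neq 0$ and $|\omega_f|<1$, so that Lemma \ref{lem-5.1} applies with exponent $k=1$, giving $\sum_{n=1}^{\infty}|b_n|^2 r^n\leq \sum_{n=1}^{\infty}|a_n|^2 r^n$. Then I would run the Cauchy--Schwarz step exactly as in the proof of Theorem \ref{thh-5.3} to get $\sum_{n=1}^{\infty}|b_n|r^n\leq (1-|a_0|^2)\tfrac{r}{1-r}$, combine with the Schwarz--Pick bound $|h(z)|\leq (r+|a_0|)/(1+r|a_0|)$, and obtain
\begin{align*}
\mathcal{I}^1_f(a,r,1)\leq 1+\frac{(1-a)\bigl(-(r-1)^2+2(1+a)r(1+ra)\bigr)}{(1+ra)(1-r)},
\end{align*}
which is $\leq 1$ once $-(1-r)^2+4r(1+r)\leq 0$, i.e. $3r^2+6r-1\leq 0$. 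The second assertion is identical with $|h(z)|$ replaced by $|h(z)|^2$, leading to $r^2+4r-1\leq 0$. Since both quadratics have a single root in $(0,1)$ and are negative to the left of it, the stated radii follow.

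There is essentially no hard step here: the Corollary is a limiting case $K\to\infty$ (equivalently $k\to 1$) of Theorem \ref{thh-5.3}. The only point requiring a word of care is that Lemma \ref{lem-5.1} is stated for $k\in[0,1]$, so the endpoint $k=1$ is legitimately covered; if one preferred to stay strictly inside $[0,1)$, one would instead apply Theorem \ref{thh-5.3} for each $k<1$ and pass to the limit $k\to 1^-$ in the defining equations $2(k+1)r(1+r)-(1-r)^2=0$ and $(k+1)r(1+r)-(1-r)^2=0$, using continuity of the roots in $k$. Sharpness is not claimed in the Corollary, so no extremal-function analysis is needed, though it could be recovered from $f_0=h_0+\overline{g_0}$ with $g_0=\lambda k h_0$ in the limit $k\to 1^-$ and $a\to 1^-$.
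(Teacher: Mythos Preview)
Your proposal is correct and follows exactly the paper's approach: the paper's proof is the single line ``Allow $k\rightarrow 1$ in Theorem \ref{thh-5.3},'' and you have spelled out precisely this specialization, verifying that the defining equations $2(k+1)r(1+r)-(1-r)^2=0$ and $(k+1)r(1+r)-(1-r)^2=0$ reduce to $3r^2+6r-1=0$ and $r^2+4r-1=0$ at $k=1$. Your remark that Lemma \ref{lem-5.1} is stated for $k\in[0,1]$ (so the endpoint is legitimate) is a nice touch that the paper leaves implicit.
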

\begin{proof}
	Allow $k\rightarrow1$ in Theorem \ref{thh-5.3} to prove this corollary.
\end{proof}
\begin{proof}[\bf Proof of Theorem \ref{thh-5.4}]
	We recall that if $p(z)=\sum_{n=0}^{\infty}p_nz^n$ is analytic in $\mathbb{D}$ such that $Re (p(z))>0$ in $\mathbb{D}$ , then $|p_n|\leq 2Re (p_0)$ for all $n\geq 1.$ Applying this result to $p(z)=1-f(z)$ leads to $|a_n|\leq 2(1-a_0) $ for all $n\geq 1.$ By the Lemma \ref{lem-5.1}, we obtain that 
	\[
	\begin{cases}
	\displaystyle\sum_{n=1}^{\infty}|b_n|^2r^n\leq k^2\sum_{n=1}^{\infty}|a_n|^2r^n\leq 4k^2(1-a_0)^2\dfrac{r}{1-r},\vspace{2mm}\\
	\displaystyle\sum_{n=1}^{\infty}|b_n|r^n\leq \sqrt{\sum_{n=1}^{\infty}|b_n|^2r^n}\sqrt{\sum_{n=1}^{\infty}r^n}\leq 2k(1-a_0)\dfrac{r}{1-r}.
	\end{cases}
	\]
Moreover, it is easy to see that
\begin{align*}
|h(z)|\leq |h(0)|+|h(z)-h(0)|\leq a_0+\sum_{n=1}^{\infty}|a_n|r^n\leq a_0+2(1-a_0)\dfrac{r}{1-r}.
\end{align*}
In view of the above estimates, we obtain
\begin{align*}
	\mathcal{J}_f(a,r,k)&\leq a_0+4(1-a_0)\dfrac{r}{1-r}+2k(1-a_0)\dfrac{r}{1-r}\\&=1+(1-a_0)\left(-1+4\dfrac{r}{1-r}+2k\dfrac{r}{1-r}\right)
\end{align*}which is less than or equal to $1$ for $r\leq1/(5+2k).$ Again, substituting $k=(K-1)/(K+1)$ gives the desired result. Moreover, sharpness can be easily seen by the functions of the form
\begin{align*}
	h(z)=\dfrac{a-z}{1-az},\; 0<a<1,\;\mbox{and}\; g(z)=k\lambda h(z),\; |\lambda|=1,
\end{align*}
and we omit the details.
\end{proof}
We obtain the following corollary of Theorem \ref{thh-5.4} for a sense-preserving harmonic mapping in $\mathbb{D}$ due to $k\rightarrow 1.$
\begin{cor}
Suppose that $f(z)=h(z)+\overline{g(z)}=\sum_{n=0}^{\infty}a_nz^n+\overline{\sum_{n=1}^{\infty}b_nz^n}$ is a sense-preserving harmonic mapping in $\mathbb{D},$  where $h$ is satisfies the condition $Re(h(z)) \leq 1$ in $\mathbb{D}$ and $f(0)\in(0,1).$ Then
\begin{align*}
|h(z)|+\sum_{n=1}^{\infty}(|a_n|+|b_n|)r^n\leq 1\;\;\mbox{for}\;\; r\leq \dfrac{1}{7}
\end{align*}
and the number $1/7$ is sharp.
\end{cor}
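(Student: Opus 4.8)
The plan is to read this corollary as the endpoint $k=1$ (equivalently $K\to\infty$) of Theorem~\ref{thh-5.4}. First I would recall that a sense-preserving harmonic mapping $f=h+\overline{g}$ has Jacobian $J_f=|h'|^2-|g'|^2>0$, so its dilatation $\omega_f=g'/h'$ satisfies $|\omega_f(z)|<1$ in $\mathbb{D}$; in particular $|g'(z)|\leq|h'(z)|$ throughout $\mathbb{D}$. This is exactly the hypothesis $|g'|\leq k|h'|$ of Lemma~\ref{lem-5.1} with $k=1$, and since that lemma is stated on the \emph{closed} interval $k\in[0,1]$, it applies verbatim at $k=1$. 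The limit $(K+1)/(7K+3)\to 1/7$ as $K\to\infty$ already points to the target radius, but I would rather re-run the estimates directly at $k=1$ so as to avoid any delicate passage to the limit inside the inequality.

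Next I would reproduce, with $k=1$, the three estimates underlying the proof of Theorem~\ref{thh-5.4}. Write $a_0=f(0)=h(0)\in(0,1)$ and set $p(z)=1-h(z)$, which is analytic with $\operatorname{Re}p\geq 0$ and $p_0=1-a_0>0$; the classical coefficient bound for functions of nonnegative real part gives $|a_n|=|p_n|\leq 2\operatorname{Re}(p_0)=2(1-a_0)$ for $n\geq1$. Hence $\sum_{n=1}^{\infty}|a_n|r^n\leq 2(1-a_0)\,r/(1-r)$ and $|h(z)|\leq a_0+2(1-a_0)\,r/(1-r)$. Lemma~\ref{lem-5.1} with $k=1$ yields $\sum_{n=1}^{\infty}|b_n|^2r^n\leq 4(1-a_0)^2\,r/(1-r)$, and Cauchy--Schwarz then gives $\sum_{n=1}^{\infty}|b_n|r^n\leq 2(1-a_0)\,r/(1-r)$. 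Summing the three contributions,
\[
\mathcal{J}_f(a,r,1)\leq a_0+6(1-a_0)\frac{r}{1-r}
=1+(1-a_0)\left(-1+\frac{6r}{1-r}\right),
\]
which is at most $1$ precisely when $6r\leq 1-r$, i.e. for $r\leq 1/7$.

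For sharpness I would use the $k\to 1^-$ degeneration of the extremal family from Theorem~\ref{thh-5.4}: take $h(z)=(a-z)/(1-az)$ with $0<a<1$, which satisfies $\operatorname{Re}h<1$ and $h(0)=a\in(0,1)$, and $g(z)=k\lambda h(z)$ with $|\lambda|=1$, $k\in[0,1)$ (this $f$ is genuinely sense-preserving since $|g'|=k|h'|<|h'|$). Evaluating along $z=-r$ and letting $k\to 1^-$ and $a\to 1^-$ should force the left-hand side above $1$ for every $r>1/7$, proving optimality of the radius. I do not anticipate a real obstacle: the argument is a specialization of an already established theorem, and the only point needing care is the admissibility of $k=1$ in the majorant estimates. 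That is secured by the sense-preserving hypothesis together with the closed-interval statement of Lemma~\ref{lem-5.1}, so the computation carries through directly rather than through a limit of inequalities.
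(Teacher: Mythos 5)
Your derivation of the radius $1/7$ is correct and follows the paper's own route: the paper obtains this corollary simply by specializing Theorem \ref{thh-5.4} at $k\to 1$, and your re-running of that theorem's three estimates at $k=1$ (legitimate because sense-preservation gives $|g'(z)|<|h'(z)|$, so Lemma \ref{lem-5.1} applies on the closed interval $k\in[0,1]$) yields $a_0+6(1-a_0)\frac{r}{1-r}\le 1$ exactly when $r\le 1/7$.

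The genuine gap is in your sharpness step. For your family $h_a(z)=(a-z)/(1-az)$, $b_n=k\lambda a_n$, the maximum of the left-hand side over $|z|=r$ occurs at $z=-r$ and equals
\begin{align*}
\frac{a+r}{1+ar}+(1+k)\frac{(1-a^2)r}{1-ar}
=1+(1-a)\,\frac{(1+k)(1+a)r(1+ar)-(1-r)(1-ar)}{(1-ar)(1+ar)}.
\end{align*}
The numerator is increasing in both $a$ and $k$, so even in the limit $a\to 1^-$, $k\to 1^-$ it is positive only when $4r(1+r)>(1-r)^2$, i.e. for $r>(2\sqrt{3}-3)/3\approx 0.1547$. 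Hence this family produces no counterexample anywhere on $\left(1/7,\,(2\sqrt{3}-3)/3\right]$, and your claim that it forces the left-hand side above $1$ for every $r>1/7$ is false. The structural reason is that in the class $\operatorname{Re}h\le 1$ the binding estimates are the Carath\'eodory bound $|a_n|\le 2(1-a_0)$ and the triangle inequality $|h(z)|\le a_0+\sum_{n\ge 1}|a_n|r^n$, and the M\"obius function saturates neither: its coefficients obey $|a_n|\le 1-a^2<2(1-a)$. Sharpness does hold, but via the half-plane extremals degenerating at the \emph{other} endpoint $a_0\to 0^+$: take $h(z)=1-(1-a)\frac{1+z}{1-z}$, so that $a_n=-2(1-a)$, let $g(z)=k\lambda\left(h(z)-a\right)$ with $|\lambda|=1$, $k<1$, and evaluate at $z=r$, where $h(r)=a-2(1-a)\frac{r}{1-r}$ is negative for small $a$. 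The left-hand side then equals $2(2+k)(1-a)\frac{r}{1-r}-a$, which exceeds $1$ precisely for $r>\frac{1+a}{(1+a)+2(2+k)(1-a)}$, a threshold that decreases to $1/7$ as $a\to 0^+$ and $k\to 1^-$. You inherit this defect from the paper itself, which omits the sharpness details of Theorem \ref{thh-5.4} and points to the same M\"obius family; the stated sharpness is true, but only the half-plane family above actually proves it.
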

\section{Bohr-Rogosinski-type inequalities for classes of subordination}
It is worth mentioning that various questions (including the refined forms) on a related new concept called the Bohr-Rogosinski's phenomenon are currently being studied (see e.g., \cite{Huang-Liu-Ponnu-AMP-2020,Liu-Liu-Ponnusamy-2021}). However, to the best of our knowledge, unlike the refined versions of the Bohr's inequality for different classes of functions, there is no strengthened version of the Rogosinski's inequality which holds for $ r\leq 1/2 $ and for all $ N\in\mathbb{N} $. We may generalize the Bohr-Rogosinski radius for the class $\mathcal{B}$, by writing the Bohr-Rogosinski inequality is in the following equivalent form
\begin{align*}
	\sum_{n=1}^{\infty}|b_n|r^n\leq 1-|g(z)|=dist(g(z),\partial\mathbb{D}),
\end{align*}
where the number $1-|g(z)|$ is the distance from the point $g(z)$ to the boundary $\partial \mathbb{D}$ of the unit disk $\mathbb{D}.$ Using this distance formulation of the Bohr-Rogosinski inequality, the notion of Bohr-Rogosinski radius can be generalized to the class of functions $f$ analytic in $\mathbb{D}$ which take values in a given domain $D$. For our formulation, we shall use the notion of subordination.\vspace{1.2mm}

As in case of Bohr's phenomenon (see \cite{Abu-CVEE-2010}), for  a given $f,$  it is natural to introduce the subordination family: $S(f)=\{g:g\prec f\}$, where $\prec$ denotes the usual subordination relation (see \cite{Duren-1983,Goluzin-Trudy-1951}). 
\begin{defn}
	We say that the family $S(f)$ has a Bohr-Rogosinski's phenomenon if there exists  $r_f$, $0<r_f\leq1,$ such that whenever $g(z)=\sum_{n=0}^{\infty}b_nz^n\in S(f),$ we have 
	\begin{align}\label{e-4.1}
		|g(z)|+\sum_{n=1}^{\infty}|b_n|r^n\leq |f(0)|+dist(f(0), \partial f(\mathbb{D}))\;\;\mbox{for}\; |z|=r\leq r_f,
	\end{align}
	where $dist(f(0), \partial f(\mathbb{D}))$ is the Euclidean distance between $f(0)$ and the boundary of $ f(\mathbb{D}).$ 
	We observe that if $f(z)=(a_0-z)/(1-\overline{a_0}z)$ with $|a_0|<1$, and $\Omega=\mathbb{D},$ we have $dist(f(0), \partial f(\mathbb{D}))=1-|f(0)|,$ which means that \eqref{e-4.1} holds with $r_f=\sqrt{5}-2$, according to \cite[Theorem 1]{Abu-CVEE-2010}.
\end{defn}
In view of this distance, the Bohr-Rogosinski's theorem has been extended in \cite{ Ponnusamy-2017} to a variety of distances provided the Bohr-Rogosinski's phenomenon exists and the following two results are obtained.
\begin{thm}\cite{Ponnusamy-2017}\label{th-4.1}
	If $f$, $g$ are analytic in $\mathbb{D}$ such that $f$ is univalent in $\mathbb{D}$ and $g(z)=\sum_{n=0}^{\infty}b_nz^n\in S(f),$ then inequality \eqref{e-4.1} holds with $r_f=5-2\sqrt{6}$. The sharpness of $r_f$ is shown by the Koebe function $f(z)=z/(1-z)^2.$
\end{thm}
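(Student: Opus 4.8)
The plan is to dominate both the evaluated term $|g(z)|$ and the tail sum $\sum_{n\ge1}|b_n|r^n$ by a single quantity governed by $|f'(0)|$, and then to convert $|f'(0)|$ into the distance $d:=\operatorname{dist}(f(0),\partial f(\mathbb{D}))$ via the Koebe one-quarter theorem. Since $g\prec f$ with $f$ univalent, I write $g=f\circ\omega$ for a Schwarz function $\omega$ with $\omega(0)=0$; in particular $b_0=a_0=f(0)$. Two classical inputs drive the argument: first, the Rogosinski subordination inequality for univalent majorants, which (through de Branges' theorem) gives $|b_n|\le n|a_1|$ for every $n\ge1$, where $a_1=f'(0)$; second, the Koebe one-quarter theorem applied to the normalized map $(f-f(0))/a_1$, which yields $|a_1|\le 4d$.

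With these in hand the estimate is short. By the triangle inequality, $|g(z)|\le|f(0)|+\sum_{n\ge1}|b_n|r^n$, so it suffices to control the tail. Using $|b_n|\le n|a_1|$ together with $\sum_{n\ge1}nr^n=r/(1-r)^2$, I obtain $\sum_{n\ge1}|b_n|r^n\le |a_1|\,r/(1-r)^2$, and hence
\[
|g(z)|+\sum_{n\ge1}|b_n|r^n\le|f(0)|+2|a_1|\frac{r}{(1-r)^2}\le|f(0)|+8d\,\frac{r}{(1-r)^2}.
\]
The right-hand side is at most $|f(0)|+d$ precisely when $8r/(1-r)^2\le1$, i.e. when $r^2-10r+1\ge0$. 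The smaller root of $r^2-10r+1$ is $5-2\sqrt6$, so the inequality \eqref{e-4.1} holds for every $r\le r_f=5-2\sqrt6$.

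For sharpness I would test the Koebe function $f(z)=z/(1-z)^2=\sum_{n\ge1}nz^n$, which is extremal for both classical inputs simultaneously: here $f(0)=0$, $a_1=1$, and $f(\mathbb{D})=\mathbb{C}\setminus(-\infty,-1/4]$, so $d=1/4$ and $|a_1|=4d$; taking $g=f$ (legitimate since $f\prec f$) forces $|b_n|=n=n|a_1|$. Evaluating at the positive real point $z=r$ gives $|g(r)|=|f(r)|=r/(1-r)^2$ and $\sum_{n\ge1}|b_n|r^n=r/(1-r)^2$, so the left-hand side equals $2r/(1-r)^2$, which coincides with $d=1/4$ exactly at $r=5-2\sqrt6$ and strictly exceeds it for larger $r$. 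Thus $r_f$ cannot be enlarged, and the Koebe function is the stated extremal.

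I expect the only genuinely nontrivial ingredient to be the coefficient bound $|b_n|\le n|a_1|$: everything else is a one-line triangle inequality, a geometric-series identity, and the analysis of a quadratic. That bound rests on de Branges' theorem combined with Rogosinski's subordination principle, and it is exactly this step whose extremal case (the Koebe function) \emph{also} saturates the Koebe one-quarter estimate $|a_1|\le4d$. The fact that a single function makes both inequalities tight at the same point is what keeps the whole chain lossless and pins the sharp radius at $5-2\sqrt6$.
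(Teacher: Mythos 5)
Your proof is correct and takes essentially the same route as the paper: although the paper only cites this theorem (from Kayumov--Ponnusamy) without reproducing its proof, the exact chain you use --- Rogosinski's coefficient bound $|b_n|\le n|f'(0)|$ for subordination under a univalent majorant, the Koebe one-quarter theorem $|f'(0)|\le 4\operatorname{dist}(f(0),\partial f(\mathbb{D}))$, the triangle inequality on $|g(z)|$, and the resulting quadratic condition $8r/(1-r)^2\le 1$ with root $5-2\sqrt{6}$ --- is precisely the argument the paper itself runs in the proof of Theorem \ref{th-5.4}, which reduces to your computation when the co-analytic part vanishes (i.e.\ $k=0$). Your sharpness verification via the Koebe function with $g=f$ is likewise the standard one, so there is nothing to add.
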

\begin{thm}\cite{Ponnusamy-2017}\label{th-4.2}
	If $f$, $g$ are analytic in $\mathbb{D}$ such that $f$ is univalent and convex in $\mathbb{D}$ and $g(z)=\sum_{n=0}^{\infty}b_nz^n\in S(f),$ then inequality \eqref{e-4.1} holds with $r_f=1/5$. The sharpness of $r_f$ is shown by the function $f(z)=z/(1-z).$
\end{thm}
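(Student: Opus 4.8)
The plan is to exploit the subordination $g\prec f$ together with the two classical facts about convex univalent functions that make the constant $1/5$ appear: a growth bound and a coefficient-domination bound. First I would write $g=f\circ\omega$ for a Schwarz function $\omega$ (analytic, $\omega(0)=0$, $|\omega(z)|<1$), so that by the Schwarz lemma $|\omega(z)|\leq|z|=r$. Set $d:=\dist(f(0),\partial f(\mathbb{D}))$. Since $f$ is convex, $f(\mathbb{D})$ is a convex domain containing the disk of radius $d$ centred at $f(0)$, and the convex analogue of the Koebe covering theorem (the one-half covering) yields $d\geq\tfrac12|f'(0)|$, i.e. $|f'(0)|\leq 2d$. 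This is the first place convexity is used, and it pins down the right multiple of $d$.

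Next I would estimate the two pieces of the left-hand side of \eqref{e-4.1} separately. For the modulus term, the growth theorem for convex functions gives $|f(w)-f(0)|\leq\tfrac{|w|}{1-|w|}\,|f'(0)|$; inserting $w=\omega(z)$, using the monotonicity of $t\mapsto t/(1-t)$ together with $|\omega(z)|\leq r$ and $|f'(0)|\leq 2d$, produces
\[
|g(z)|\leq |f(0)|+\frac{2d\,r}{1-r}.
\]
For the series, I would invoke Rogosinski's theorem on subordination to a convex function, which gives the coefficient domination $|b_n|\leq|f'(0)|\leq 2d$ for all $n\geq1$; summing the geometric series then yields $\sum_{n=1}^{\infty}|b_n|r^n\leq\tfrac{2d\,r}{1-r}$.

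Adding the two estimates gives
\[
|g(z)|+\sum_{n=1}^{\infty}|b_n|r^n\leq |f(0)|+\frac{4d\,r}{1-r},
\]
and the right-hand side is at most $|f(0)|+d$ precisely when $4r/(1-r)\leq 1$, that is, when $r\leq 1/5$. This establishes \eqref{e-4.1} with $r_f=1/5$. For sharpness I would take $f(z)=z/(1-z)$, whose image is the half-plane $\real w>-1/2$, so that $f(0)=0$ and $d=1/2$, and choose $g=f\in S(f)$; evaluating at the real point $z=r$ gives left-hand side $2r/(1-r)$, which equals $1/2=|f(0)|+d$ at $r=1/5$ and exceeds it for every $r>1/5$.

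The main obstacle is not any single computation but ensuring that the two convexity inputs are the sharp ones: the covering constant must be exactly $1/2$ and the coefficient bound exactly $|f'(0)|$, since the growth term and the series term then each contribute the identical quantity $2d\,r/(1-r)$. The extremal map $z/(1-z)$ saturates the covering estimate, the growth estimate (at $w=z=r$) and the Rogosinski bound simultaneously, which is what forces the balance; if either convexity constant were weaker, the value $1/5$ would not be attained.
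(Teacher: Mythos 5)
Your proof is correct. Note that the paper does not actually prove Theorem \ref{th-4.2} itself (it is quoted from Kayumov--Ponnusamy), but the argument you give is essentially the one the paper runs for its harmonic generalization, Theorem \ref{th-5.3}: there too the two inputs are the Rogosinski coefficient bound $|a_n|\le|\psi'(0)|$ and the half-covering estimate $\tfrac12|\psi'(0)|\le \dist(\psi(0),\partial\psi(\mathbb{D}))$, so that the modulus term and the series each contribute $2dr/(1-r)$, forcing $4r/(1-r)\le 1$, i.e. the radius $1/(5+2k)$, which reduces to $1/5$ in your analytic case $k=0$. The only cosmetic difference is in the modulus term: you bound $|g(z)|$ by composing the convex growth theorem with the Schwarz function $\omega$, whereas the paper bounds $|h(z)-h(0)|$ by the majorant series $\sum_{n\ge1}|a_n|r^n\le 2d\,r/(1-r)$ using the same coefficient estimate; both give the identical bound, and the two sharpness examples ($z/(1-z)$ versus the paper's $1/(1-z)$) are the same half-plane map up to an additive constant.
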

Continuing the study, Liu and Ponnusamy \cite{Liu-Ponnusamy-BMMS-2019} investigated for the sharp Bohr's inequality for $K$-quasiconformal  harmonic mappings and also for the subordination class of function whose real part is subordinate to a univalent and convex functions. The authors have established the following results. 
\begin{thm}\cite{Liu-Ponnusamy-BMMS-2019}
	Suppose that $f(z)=h(z)+\overline{g(z)}=\sum_{n=0}^{\infty}a_nz^n+\overline{\sum_{n=1}^{\infty}b_nz^n}$ is a $K$-quasiconformal sense-preserving harmonic mapping in $\mathbb{D}$ and $h\prec\psi,$ where $\psi$ is univalent and convex in $\mathbb{D}.$ Then
	\begin{align*}
		\sum_{n=1}^{\infty}(|a_n|+|b_n|)r^n\leq dist(\psi(0),\partial\psi(\mathbb{D}))\;\;\mbox{for}\;\; |z|=r\leq\dfrac{K+1}{5K+1}.
	\end{align*} 
	The result is sharp.
\end{thm}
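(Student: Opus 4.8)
The plan is to prove the sharp Bohr-Rogosinski inequality for the subordination class of a $K$-quasiconformal harmonic mapping whose analytic part is subordinate to a univalent convex function $\psi$. The statement to be proved asserts that
\[
\sum_{n=1}^{\infty}(|a_n|+|b_n|)r^n\leq \operatorname{dist}(\psi(0),\partial\psi(\mathbb{D}))\quad\text{for}\quad |z|=r\leq\frac{K+1}{5K+1}.
\]
My strategy has three ingredients: a coefficient bound coming from the convexity of $\psi$, the quasiconformal dilatation estimate of Lemma \ref{lem-5.1}, and a lower bound for the distance $\operatorname{dist}(\psi(0),\partial\psi(\mathbb{D}))$ in terms of the second coefficient of $\psi$.

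First I would handle the analytic part $h=\sum a_nz^n$. Since $h\prec\psi$ with $\psi$ univalent and convex, a classical theorem of Rogosinski on subordination to convex functions yields the uniform coefficient bound $|a_n|\leq|\psi'(0)|=|\psi_1|$ for all $n\geq 1$, where $\psi(z)=\psi_0+\psi_1 z+\cdots$. This immediately gives
\[
\sum_{n=1}^{\infty}|a_n|r^n\leq |\psi_1|\,\frac{r}{1-r}.
\]
Next, for the co-analytic part $g=\sum b_n z^n$, I would invoke Lemma \ref{lem-5.1}: the dilatation condition $|g'(z)|\leq k|h'(z)|$ with $k=(K-1)/(K+1)$ gives $\sum_{n=1}^\infty|b_n|^2r^n\leq k^2\sum_{n=1}^\infty|a_n|^2r^n$, and then a Cauchy-Schwarz step (exactly as in the proof of Theorem \ref{thh-5.3}) converts this into $\sum_{n=1}^\infty|b_n|r^n\leq k|\psi_1|\,r/(1-r)$. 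Combining the two pieces gives
\[
\sum_{n=1}^{\infty}(|a_n|+|b_n|)r^n\leq (1+k)\,|\psi_1|\,\frac{r}{1-r}.
\]

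The final ingredient is the geometric lower bound on the distance. For a univalent convex $\psi$, the standard growth/covering estimate gives $\operatorname{dist}(\psi(0),\partial\psi(\mathbb{D}))\geq \tfrac12|\psi'(0)|=\tfrac12|\psi_1|$ (the image of a convex univalent map contains a disk of radius $|\psi'(0)|/2$ centered at $\psi(0)$). Substituting this into the requirement that the left-hand side not exceed the distance reduces the whole problem to the elementary inequality
\[
(1+k)\,|\psi_1|\,\frac{r}{1-r}\leq \frac{1}{2}|\psi_1|,
\]
i.e. $2(1+k)r\leq 1-r$, which rearranges to $r\leq 1/(3+2k)$. Converting via $k=(K-1)/(K+1)$ gives $r\leq (K+1)/(5K+1)$, exactly the claimed radius.

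The main obstacle — and the point requiring the most care — is the sharpness of both the coefficient bound and the distance estimate simultaneously, since the radius is sharp only if the two worst cases are compatible. The extremal configuration should be $\psi(z)=z/(1-z)$ (the half-plane map, which is convex with $|\psi_1|=1$ and for which the covering constant $1/2$ is attained), together with $h=\psi$ and $g=k\lambda h$ with $|\lambda|=1$; I would verify that for this choice every inequality above becomes an equality at $r=(K+1)/(5K+1)$, so that the constant cannot be enlarged. The Cauchy-Schwarz step is lossy in general, so I expect the extremal $h$ to be the one (a rotated half-plane map) whose coefficients $|a_n|=1-|a_0|^2$ decay geometrically and saturate the dilatation bound, which is precisely the configuration making Lemma \ref{lem-5.1} sharp; confirming that this same function also saturates the convex coefficient bound is the delicate compatibility check.
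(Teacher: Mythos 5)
Your proposal is correct and follows essentially the same argument as the paper's (it is precisely the chain of estimates in the proof of Theorem \ref{th-5.3} with the $|h(z)|$ term dropped): Rogosinski's bound $|a_n|\leq|\psi'(0)|$, the covering estimate $dist(\psi(0),\partial\psi(\mathbb{D}))\geq \tfrac{1}{2}|\psi'(0)|$, Lemma \ref{lem-5.1} plus Cauchy--Schwarz for the co-analytic coefficients, and the reduction to $2(1+k)r\leq 1-r$, i.e.\ $r\leq (K+1)/(5K+1)$. Your sharpness configuration $\psi=h=z/(1-z)$, $g=k\lambda h$ with $|\lambda|=1$ is also the right one, and the compatibility issue you flag at the end is vacuous there: the $|b_n|$ are constant, so the Cauchy--Schwarz step is an equality for that example (your aside about coefficients $1-|a_0|^2$ decaying geometrically pertains to the disk-automorphism extremal of Theorem \ref{thh-5.3}, not to this convex-subordination case).
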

\begin{thm}\cite{Liu-Ponnusamy-BMMS-2019}
Suppose that $f(z)=h(z)+\overline{g(z)}=\sum_{n=0}^{\infty}a_nz^n+\overline{\sum_{n=1}^{\infty}b_nz^n}$ is a $K$-quasiconformal sense-preserving harmonic mapping in $\mathbb{D}$ and $h\prec\psi,$ where $\psi$ is analytic and univalent in $\mathbb{D}.$ Then
\begin{align*}
\sum_{n=1}^{\infty}(|a_n|+|b_n|)r^n\leq dist(\psi(0),\partial\psi(\mathbb{D}))
\end{align*} 
for $|z|=r\leq r_u(k),$ where $r_u(k)$ is the root of the equation $(1-r)^2-4r(1+k\sqrt{1+r})=0$ in the interval $(0,1)$ and $k=(K-1)/(K+1).$
\end{thm}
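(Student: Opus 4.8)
The plan is to reduce the estimate to two ingredients: a coefficient bound for the analytic part $h$ coming from the univalent majorant $\psi$, and the quasiconformal control of the co-analytic coefficients supplied by Lemma \ref{lem-5.1}. Write $d:=\dist(\psi(0),\partial\psi(\mathbb{D}))$, and observe that since $h\prec\psi$ we have $h(0)=\psi(0)$, so the constant terms play no role in the sum on the left-hand side.

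First I would record the coefficient estimate $|a_n|\le 4nd$ for every $n\ge 1$. The function $\phi(z):=(\psi(z)-\psi(0))/\psi'(0)$ is normalized univalent, and $h\prec\psi$ gives $(h-\psi(0))/\psi'(0)\prec\phi$ with $\phi\in\mathcal{S}$; by Rogosinski's subordination theorem (a consequence of the de~Branges theorem), the coefficients of a function subordinate to a normalized univalent function obey $|a_n|\le n$, whence $|a_n|\le n|\psi'(0)|$. The Koebe one-quarter theorem applied to $\phi$ yields $d\ge\tfrac14|\psi'(0)|$, so $|a_n|\le n|\psi'(0)|\le 4nd$. This is the step I expect to be the main obstacle, since it rests on the deepest external input; everything afterwards is elementary summation.

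Next I would estimate the two pieces of the majorant series. For the analytic part,
\[
\sum_{n=1}^{\infty}|a_n|r^n\le 4d\sum_{n=1}^{\infty}nr^n=\frac{4dr}{(1-r)^2}.
\]
For the co-analytic part, I first bound $\sum_{n=1}^{\infty}|a_n|^2r^n\le 16d^2\sum_{n=1}^{\infty}n^2r^n=16d^2\,\dfrac{r(1+r)}{(1-r)^3}$; then Lemma \ref{lem-5.1} gives $\sum_{n=1}^{\infty}|b_n|^2r^n\le k^2\sum_{n=1}^{\infty}|a_n|^2r^n$, and the Cauchy--Schwarz inequality together with $\sum_{n=1}^{\infty}r^n=r/(1-r)$ yields
\[
\sum_{n=1}^{\infty}|b_n|r^n\le\Big(\sum_{n=1}^{\infty}|b_n|^2r^n\Big)^{1/2}\Big(\sum_{n=1}^{\infty}r^n\Big)^{1/2}\le\frac{4dkr\sqrt{1+r}}{(1-r)^2}.
\]

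Finally I would add the two bounds to obtain
\[
\sum_{n=1}^{\infty}(|a_n|+|b_n|)r^n\le\frac{4dr\big(1+k\sqrt{1+r}\big)}{(1-r)^2},
\]
so that the desired inequality holds as soon as $(1-r)^2-4r\big(1+k\sqrt{1+r}\big)\ge 0$. Since the left-hand side of this last inequality equals $1$ at $r=0$ and is strictly decreasing on $(0,1)$, it remains nonnegative up to its unique root $r_u(k)$, which is precisely the radius in the statement; this completes the argument.
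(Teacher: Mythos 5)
Your proof is correct and takes essentially the same route as the paper: although the paper only cites this theorem from Liu--Ponnusamy, its own proof of the companion result (Theorem \ref{th-5.4}) uses exactly your ingredients — the bound $|a_n|\le 4n\,\dist(\psi(0),\partial\psi(\mathbb{D}))$ obtained from Rogosinski's subordination coefficient theorem together with the Koebe quarter theorem, then Lemma \ref{lem-5.1} with Cauchy--Schwarz for the $b_n$'s, and the same summations $\sum n r^n$, $\sum n^2 r^n$. Dropping the $|h(z)|$ term from that argument gives precisely your final inequality and the radius equation $(1-r)^2-4r(1+k\sqrt{1+r})=0$, so there is no gap.
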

Following that, we will look at the Bohr-Rogosinski's phenomenon for the family $S(\psi)$ . We say that the family $S(\psi)$ has Bohr-Rogosinski's phenomenon if there exists $r_{\psi},$ $0<r_{\psi}\leq 1,$ such that whenever $f(z)=h(z)+\overline{g(z)}=\sum_{n=0}^{\infty}a_nz^n+\overline{\sum_{n=1}^{\infty}b_nz^n}$ and  $h(z)=\sum_{n=0}^{\infty}a_nz^n\prec\psi(z),$ we have  
\begin{align}\label{ee-5.2}
|h(z)|+\sum_{n=1}^{\infty}(|a_n|+|b_n|)r^n\leq |\psi(0)|+ dist(\psi(0),\partial\psi(\mathbb{D})).
\end{align}
\par We obtain two results in this section for $K$-quasiconformal sense-preserving harmonic mapping in $\mathbb{D}$ and $h\prec \psi,$ where $\psi$ is univalent and convex in $\mathbb{D}.$ We state the first result here.
\begin{thm}\label{th-5.3}
Suppose that $f(z)=h(z)+\overline{g(z)}=\sum_{n=0}^{\infty}a_nz^n+\overline{\sum_{n=1}^{\infty}b_nz^n}$ is a $K$-quasiconformal sense-preserving harmonic mapping in $\mathbb{D}$ and $h\prec \psi,$ where $\psi$ is univalent and convex in $\mathbb{D}.$ Then the inequality \eqref{ee-5.2} holds for  $|z|=r\leq r_{u,c}=(K+1)/(7K+3)$.
The result is sharp. 
\end{thm}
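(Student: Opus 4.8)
The plan is to prove Theorem \ref{th-5.3} by mirroring the structure of the proof of Theorem \ref{thh-5.3}, but now exploiting the subordination hypothesis $h \prec \psi$ with $\psi$ univalent and convex, and the Bohr--Rogosinski target on the right-hand side of \eqref{ee-5.2}. First I would normalize and set up the basic estimates. Writing $h(z) = \sum_{n=0}^\infty a_n z^n$, the subordination $h \prec \psi$ with $\psi$ convex univalent gives, via the classical coefficient inequality for functions subordinate to a convex univalent map, the bound $|a_n| \leq |\psi'(0)|$ for all $n \geq 1$; moreover, the distance from $\psi(0)$ to $\partial \psi(\mathbb{D})$ satisfies $\operatorname{dist}(\psi(0), \partial\psi(\mathbb{D})) \geq \tfrac{1}{2}|\psi'(0)|$ by the Koebe-type $1/2$-theorem for convex maps (indeed $=\tfrac{1}{2}|\psi'(0)|$ in the extremal half-plane case). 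These two facts let me replace every occurrence of $|\psi'(0)|$ by $2\,\operatorname{dist}(\psi(0),\partial\psi(\mathbb{D}))$ and thereby convert coefficient sums into multiples of the target distance.

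Next I would estimate each piece of the left-hand side of \eqref{ee-5.2} separately. For the initial term, I use $|h(z)| \leq |h(0)| + |h(z) - h(0)| \leq |\psi(0)| + \sum_{n=1}^\infty |a_n| r^n$, and bound the tail by $\sum_{n=1}^\infty |a_n| r^n \leq |\psi'(0)|\, \tfrac{r}{1-r}$. For the $g$-coefficients I invoke Lemma \ref{lem-5.1}, which under the $K$-quasiconformal dilatation condition $|g'| \leq k |h'|$ yields $\sum_{n=1}^\infty |b_n|^2 r^n \leq k^2 \sum_{n=1}^\infty |a_n|^2 r^n$; then a Cauchy--Schwarz step as in the proof of Theorem \ref{thh-5.3} gives $\sum_{n=1}^\infty |b_n| r^n \leq k\,|\psi'(0)|\,\tfrac{r}{1-r}$. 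Assembling these, the combined majorant $|h(z)| + \sum_{n=1}^\infty(|a_n|+|b_n|)r^n$ is at most $|\psi(0)| + (1+k)|\psi'(0)|\,\tfrac{r}{1-r} + |\psi'(0)|\,\tfrac{r}{1-r}$, where the extra $|\psi'(0)|\tfrac{r}{1-r}$ comes from the $|h(z)|$ term's excess over $|\psi(0)|$. Using $|\psi'(0)| \leq 2\operatorname{dist}(\psi(0),\partial\psi(\mathbb{D}))$, I want this to be dominated by $|\psi(0)| + \operatorname{dist}(\psi(0),\partial\psi(\mathbb{D}))$, which reduces to the scalar inequality
\begin{align*}
2(2+k)\frac{r}{1-r} \leq 1.
\end{align*}
Solving gives $r \leq \dfrac{1}{5+2k}$, and substituting $k = (K-1)/(K+1)$ produces exactly $r_{u,c} = (K+1)/(7K+3)$, matching the claimed radius.

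For sharpness, I would exhibit an extremal configuration analogous to those in Theorems \ref{thh-5.3} and \ref{thh-5.4}: take $\psi$ to be the half-plane map $\psi(z) = z/(1-z)$ (the convex extremal realizing equality in the $1/2$-theorem), let $h = \psi$ composed so that the coefficient bounds are attained, and set $g = k\lambda h$ with $|\lambda| = 1$. Evaluating the left-hand side of \eqref{ee-5.2} along the appropriate radius $z = -r$ should show the inequality becomes equality in the limiting sense at $r = r_{u,c}$, so that no larger radius works. The main obstacle I anticipate is the bookkeeping that produces the \emph{correct} coefficient $2+k$ (rather than $1+k$) inside the scalar inequality: the key subtlety is that the $|h(z)|$ term itself contributes a full $|\psi'(0)|\tfrac{r}{1-r}$ beyond $|\psi(0)|$, so the factor of $2$ in front of $\operatorname{dist}$ must be tracked carefully when passing from $|\psi'(0)|$ to the distance, and the extremal $\psi$ must be chosen precisely so that both the coefficient estimate $|a_n| = |\psi'(0)|$ and the distance identity $\operatorname{dist} = \tfrac{1}{2}|\psi'(0)|$ hold simultaneously — otherwise the radius would not be sharp.
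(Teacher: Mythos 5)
Your derivation of the inequality is essentially the paper's own proof: the same convex-subordination facts ($|a_n|\le|\psi'(0)|$ and $\tfrac{1}{2}|\psi'(0)|\le \dist(\psi(0),\partial\psi(\mathbb{D}))$, hence $|a_n|\le 2\,\dist(\psi(0),\partial\psi(\mathbb{D}))$), the same use of Lemma \ref{lem-5.1} with Cauchy--Schwarz to get $\sum_{n\ge1}|b_n|r^n\le 2k\,\dist(\psi(0),\partial\psi(\mathbb{D}))\frac{r}{1-r}$, the same splitting $|h(z)|\le|\psi(0)|+\sum_{n\ge1}|a_n|r^n$, and the same scalar reduction $(4+2k)\frac{r}{1-r}\le 1$, i.e.\ $r\le 1/(5+2k)=(K+1)/(7K+3)$. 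Your extremal pair $\psi=h=z/(1-z)$, $g=k\lambda h$ is the paper's choice $\psi=h=1/(1-z)$, $g'=k\lambda h'$ shifted by a constant, so the approach matches there too.

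The one concrete slip is the evaluation point in your sharpness sketch: for $h(z)=z/(1-z)$ you must evaluate at $z=+r$, not $z=-r$. Indeed $|h(-r)|=r/(1+r)$, so at $z=-r$ the left side of \eqref{ee-5.2} equals $\frac{r}{1+r}+(1+k|\lambda|)\frac{r}{1-r}$, which at the critical radius $r=1/(5+2k)$ with $|\lambda|=1$ equals $\frac12-\frac{1}{2(2+k)(3+k)}<\frac12=|\psi(0)|+\dist(\psi(0),\partial\psi(\mathbb{D}))$ (for instance $\frac18+\frac13<\frac12$ when $k=1$, $r=1/7$); the inequality does not fail there, so that point witnesses nothing. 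Evaluating instead at $z=r$, where $|h(r)|=r/(1-r)$ is maximal, the left side becomes $(2+k|\lambda|)\frac{r}{1-r}$, which exceeds $\frac12$ precisely when $r>1/(5+2k|\lambda|)$; letting $|\lambda|\to 1$ (or taking $|\lambda|=1$, which is still admissible) gives the sharpness of $(K+1)/(7K+3)$ exactly as in the paper.
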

We obtain the following corollary which is an immediate consequence of Theorem \ref{th-5.3}. More precisely, we obtain a corollary showing the sharp Bohr-Rogosinski inequality for a sense-preserving harmonic mapping $f=h+\overline{g}$ in $\mathbb{D}$ and $h\prec \psi,$ where $\psi$ is univalent and convex in $\mathbb{D}.$
\begin{cor}\label{cor-5.3}
	Suppose that $f(z)=h(z)+\overline{g(z)}=\sum_{n=0}^{\infty}a_nz^n+\overline{\sum_{n=1}^{\infty}b_nz^n}$ is a sense-preserving harmonic mapping in $\mathbb{D}$ and $h\prec \psi,$ where $\psi$ is univalent and convex in $\mathbb{D}.$ Then the inequality
	\eqref{ee-5.2} holds for $|z|=r\leq1/7.$
	The number $1/7$ is sharp. 
\end{cor}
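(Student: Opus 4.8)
The plan is to prove Theorem \ref{th-5.3} by adapting the proof scheme of Theorem \ref{thh-5.4}, since the Bohr-Rogosinski radius $(K+1)/(7K+3)$ coincides exactly with the constant appearing in Theorem \ref{thh-5.4}. The key observation is that the subordination hypothesis $h\prec\psi$ with $\psi$ univalent and convex is essentially a device for transferring coefficient bounds: by a theorem of Rogosinski, subordination to a convex univalent function $\psi(z)=\sum_{n=0}^{\infty}c_nz^n$ yields $|a_n|\leq|c_1|$ for all $n\geq 1$, where $|c_1|=|\psi'(0)|$. The crucial point for a convex function is that $\dist(\psi(0),\partial\psi(\mathbb{D}))\geq|c_1|/2$, so the coefficient bound $|a_n|\leq|c_1|\leq 2\dist(\psi(0),\partial\psi(\mathbb{D}))$ plays precisely the role that $|a_n|\leq 2(1-a_0)$ played in the proof of Theorem \ref{thh-5.4}.

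First I would set $d:=\dist(\psi(0),\partial\psi(\mathbb{D}))$ and record the two consequences of convex subordination: the coefficient estimate $|a_n|\leq 2d$ for $n\geq1$, and the growth estimate $|h(z)-\psi(0)|\leq\sum_{n=1}^{\infty}|a_n|r^n\leq 2d\,r/(1-r)$, whence $|h(z)|\leq|\psi(0)|+2d\,r/(1-r)$. Next, using Lemma \ref{lem-5.1} applied to the dilatation relation $|g'|\leq k|h'|$ together with the Cauchy-Schwarz step exactly as in the proof of Theorem \ref{thh-5.4}, I would obtain
\begin{align*}
\sum_{n=1}^{\infty}|b_n|r^n\leq\sqrt{\sum_{n=1}^{\infty}|b_n|^2r^n}\sqrt{\sum_{n=1}^{\infty}r^n}\leq 2kd\,\dfrac{r}{1-r}.
\end{align*}
Assembling these estimates gives
\begin{align*}
|h(z)|+\sum_{n=1}^{\infty}(|a_n|+|b_n|)r^n\leq|\psi(0)|+2d\dfrac{r}{1-r}+2d\dfrac{r}{1-r}+2kd\dfrac{r}{1-r},
\end{align*}
and the right-hand side is bounded by $|\psi(0)|+d$ precisely when $(4+2k)r/(1-r)\leq 1$, i.e. $r\leq 1/(5+2k)$. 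Substituting $k=(K-1)/(K+1)$ converts $1/(5+2k)$ into $(K+1)/(7K+3)$, which is the claimed radius.

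For sharpness I would exhibit the extremal configuration $h(z)=\psi(z)$ with $\psi$ a half-plane mapping such as $\psi(z)=z/(1-z)$ (or a rotation thereof), for which equality holds in the convex coefficient bound $|c_1|=2d$, together with $g(z)=k\lambda h(z)$, $|\lambda|=1$, chosen so that the dilatation bound is attained and all the moduli align along the real ray at $z=-r$; this forces each inequality above to be an equality in the limit, showing the radius cannot be enlarged. The main obstacle I anticipate is not the algebra but the justification of the two structural ingredients: establishing the Rogosinski coefficient bound $|a_n|\leq|c_1|$ under subordination and the convex lower bound $d\geq|c_1|/2$ in a self-contained way, and then verifying that the chosen extremal functions genuinely saturate every intermediate estimate simultaneously (the growth estimate, the Lemma \ref{lem-5.1} step, and the Cauchy-Schwarz step) so that the limiting value $|\psi(0)|+d$ is actually reached. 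Corollary \ref{cor-5.3} then follows immediately by letting $k\to 1$, since $(K+1)/(7K+3)\to 1/7$.
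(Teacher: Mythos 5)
Your proposal is correct and takes essentially the same route as the paper: the paper proves Theorem \ref{th-5.3} with exactly your estimates (the convex-subordination coefficient bound $|a_n|\le 2\,\dist(\psi(0),\partial\psi(\mathbb{D}))$, Lemma \ref{lem-5.1} combined with Cauchy--Schwarz for the co-analytic part, and the growth bound on $|h|$, yielding $r\le 1/(5+2k)=(K+1)/(7K+3)$), establishes sharpness via the half-plane mapping $\psi(z)=h(z)=1/(1-z)$ with $g=k\lambda h$, and then derives Corollary \ref{cor-5.3} by letting $k\to1$ (equivalently $K\to\infty$), just as you do. The only slip is that for your extremal choice $\psi(z)=z/(1-z)$, whose Taylor coefficients are all positive, the moduli align at $z=+r$ rather than $z=-r$; this does not affect the argument.
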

\begin{proof}[\bf Proof of Corollary \ref{cor-5.3}]
	Allow $k=1$ in the proof of Theorem \ref{th-5.3}. Indeed, since $ f(z)$ is locally univalent and sense-preserving in 
	$\mathbb{D},$ we have $|g^{\prime}(z)|<|h^{\prime}(z)|$ in $\mathbb{D}$ and thus, we can allow $K\rightarrow\infty$ to obtain the desired conclusion.  
\end{proof}
The next theorem is the second result of this section.
\begin{thm}\label{th-5.4}
Suppose that $f(z)=h(z)+\overline{g(z)}=\sum_{n=0}^{\infty}a_nz^n+\overline{\sum_{n=1}^{\infty}b_nz^n}$ is a $K$-quasiconformal sense-preserving harmonic mapping in $\mathbb{D}$ and $h\prec \psi,$ where $\psi$ is univalent in $\mathbb{D}.$ Then the inequality
\eqref{ee-5.2} holds for $|z|=r\leq r_u(k)$, where $r_u(k)$ is the unique root of the equation 
$8r+4kr\sqrt{1+r}-(1-r)^2=0$ in $(0,1).$
\end{thm}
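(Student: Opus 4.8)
The plan is to follow the architecture of the univalent-subordination result of Liu and Ponnusamy quoted above, but to carry the additional term $|h(z)|$ through every estimate. Write $\lambda:=|\psi'(0)|$ and $d:=dist(\psi(0),\partial\psi(\mathbb{D}))$, and normalize by setting $\Psi:=(\psi-\psi(0))/\psi'(0)$, which lies in the class $S$ of normalized univalent functions. Because $h\prec\psi$ with $\psi$ univalent, there is a Schwarz function $\omega$ (so $\omega(0)=0$ and, by the Schwarz lemma, $|\omega(z)|\le|z|=r$) with $h=\psi\circ\omega$; in particular $a_0=h(0)=\psi(0)$, so the term $|\psi(0)|$ on the right of \eqref{ee-5.2} is exactly matched by the constant part of $h$.

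First I would bound $|h(z)|$. Since $h(z)-\psi(0)=\psi'(0)\,\Psi(\omega(z))$, the classical growth theorem for $\Psi\in S$ together with the Schwarz lemma gives
\[
|h(z)|\le|\psi(0)|+\lambda\frac{|\omega(z)|}{(1-|\omega(z)|)^2}\le|\psi(0)|+\frac{\lambda r}{(1-r)^2}.
\]
Next, since $(h-\psi(0))/\psi'(0)\prec\Psi$ with $\Psi\in S$, the Rogosinski coefficient theorem (a consequence of de Branges' theorem) yields $|a_n|\le n\lambda$ for $n\ge1$, and hence both
\[
\sum_{n=1}^{\infty}|a_n|r^n\le\lambda\sum_{n=1}^{\infty}nr^n=\frac{\lambda r}{(1-r)^2},\qquad \sum_{n=1}^{\infty}|a_n|^2r^n\le\lambda^2\sum_{n=1}^{\infty}n^2r^n=\frac{\lambda^2 r(1+r)}{(1-r)^3}.
\]

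For the co-analytic coefficients I would repeat the Cauchy--Schwarz device from the proof of Theorem \ref{thh-5.3}: by Lemma \ref{lem-5.1},
\[
\sum_{n=1}^{\infty}|b_n|r^n\le\Big(\sum_{n=1}^{\infty}|b_n|^2r^n\Big)^{1/2}\Big(\sum_{n=1}^{\infty}r^n\Big)^{1/2}\le k\Big(\frac{\lambda^2 r(1+r)}{(1-r)^3}\Big)^{1/2}\Big(\frac{r}{1-r}\Big)^{1/2}=\frac{k\lambda r\sqrt{1+r}}{(1-r)^2}.
\]
Adding the three estimates, the left-hand side of \eqref{ee-5.2} minus $|\psi(0)|$ is at most $\frac{\lambda r}{(1-r)^2}\big(2+k\sqrt{1+r}\big)$. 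Finally I would invoke the Koebe one-quarter theorem applied to $\Psi$, which gives $d\ge\lambda/4$, i.e. $\lambda\le4d$; substituting bounds this expression by $\frac{4dr}{(1-r)^2}\big(2+k\sqrt{1+r}\big)$. This is $\le d$ exactly when $(1-r)^2\ge4r\big(2+k\sqrt{1+r}\big)$, that is, when $8r+4kr\sqrt{1+r}-(1-r)^2\le0$. Since this last function increases from $-1$ at $r=0$ to a positive value as $r\to1$, it has a unique root $r_u(k)$ in $(0,1)$ and is nonpositive precisely for $r\le r_u(k)$, which is the asserted range.

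I do not expect any single step to be a genuine obstacle, as the proof is a composition of classical estimates. The points requiring care are the justification of $h=\psi\circ\omega$ and of the bound $|a_n|\le n\lambda$ for subordination to a \emph{non-normalized} univalent $\psi$ (both handled by passing to $\Psi\in S$), and the careful bookkeeping of the powers of $(1-r)$ in the Cauchy--Schwarz step, which must reproduce exactly the factor $r\sqrt{1+r}/(1-r)^2$ responsible for the $\sqrt{1+r}$ in the radius equation. Note also that, unlike Theorem \ref{th-5.3}, no sharpness is claimed here, consistent with the fact that the Cauchy--Schwarz inequality used to estimate $\sum|b_n|r^n$ is lossy.
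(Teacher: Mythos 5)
Your proposal is correct and follows essentially the same route as the paper's proof: the same coefficient bound $|a_n|\le n|\psi'(0)|\le 4n\,\mathrm{dist}(\psi(0),\partial\psi(\mathbb{D}))$, the same use of Lemma \ref{lem-5.1} with Cauchy--Schwarz to produce the factor $r\sqrt{1+r}/(1-r)^2$, and the same final reduction to $8r+4kr\sqrt{1+r}\le(1-r)^2$. The only cosmetic differences are that you defer the Koebe quarter-theorem conversion $|\psi'(0)|\le 4\,\mathrm{dist}$ to the last step and bound $|h(z)|$ via the growth theorem for $\Psi\in S$ rather than by summing the coefficient estimates, both of which yield identical bounds.
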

We obtain the next corollary of Theorem \ref{th-5.4}.
\begin{cor}\label{cor-5.5}
	Suppose that $f(z)=h(z)+\overline{g(z)}=\sum_{n=0}^{\infty}a_nz^n+\overline{\sum_{n=1}^{\infty}b_nz^n}$ is a sense-preserving harmonic mapping in $\mathbb{D}$ and $h\prec \psi,$ where $\psi$ is analytic and univalent in $\mathbb{D}.$ Then the inequality \eqref{ee-5.2} holds for $|z|=r\leq r^*_u,$ where $r^*_u$ is the root of the equation $8r+4r\sqrt{1+r}-(1-r)^2=0$ in the interval $(0,1).$
\end{cor}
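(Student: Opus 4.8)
The plan is to reduce inequality \eqref{ee-5.2} to a single one-variable estimate in $r$ and $k$ by bounding the three pieces $|h(z)|$, $\sum_{n=1}^\infty|a_n|r^n$ and $\sum_{n=1}^\infty|b_n|r^n$ separately, and then showing that their sum is at most $|\psi(0)|+d$ exactly on the range $r\le r_u(k)$, where $d:=dist(\psi(0),\partial\psi(\mathbb{D}))$. Since $h\prec\psi$, we may write $h=\psi\circ\omega$ for a Schwarz function $\omega$ with $\omega(0)=0$, so in particular $a_0=h(0)=\psi(0)$. The first main ingredient is the coefficient estimate attached to a univalent majorant: because $\psi$ is univalent, Rogosinski's subordination theorem gives $|a_n|\le n|\psi'(0)|$ for every $n\ge1$, while the Koebe one-quarter theorem gives $|\psi'(0)|\le 4d$; combining these yields the working bound $|a_n|\le 4nd$.

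First I would feed $|a_n|\le 4nd$ into the two analytic-part sums,
\[
\sum_{n=1}^\infty|a_n|r^n\le 4d\sum_{n=1}^\infty nr^n=\frac{4dr}{(1-r)^2},\qquad \sum_{n=1}^\infty|a_n|^2r^n\le 16d^2\sum_{n=1}^\infty n^2r^n=\frac{16d^2r(1+r)}{(1-r)^3}.
\]
For the co-analytic part, Lemma \ref{lem-5.1} gives $\sum_{n=1}^\infty|b_n|^2r^n\le k^2\sum_{n=1}^\infty|a_n|^2r^n$, and then the Cauchy--Schwarz inequality together with $\sum_{n=1}^\infty r^n=r/(1-r)$ yields
\[
\sum_{n=1}^\infty|b_n|r^n\le k\left(\sum_{n=1}^\infty|a_n|^2r^n\right)^{1/2}\left(\frac{r}{1-r}\right)^{1/2}\le \frac{4kdr\sqrt{1+r}}{(1-r)^2};
\]
the factor $\sqrt{1+r}$, which is the source of the surd in the defining equation of $r_u(k)$, enters precisely at this Cauchy--Schwarz step.

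Next I would bound the Bohr--Rogosinski leading term using $a_0=\psi(0)$, namely $|h(z)|\le|a_0|+\sum_{n=1}^\infty|a_n|r^n=|\psi(0)|+\sum_{n=1}^\infty|a_n|r^n$. Assembling the three estimates gives
\[
|h(z)|+\sum_{n=1}^\infty(|a_n|+|b_n|)r^n\le |\psi(0)|+\frac{4dr}{(1-r)^2}\bigl(2+k\sqrt{1+r}\bigr),
\]
so \eqref{ee-5.2} holds as soon as $4r\bigl(2+k\sqrt{1+r}\bigr)\le(1-r)^2$, that is, $8r+4kr\sqrt{1+r}-(1-r)^2\le 0$, which is exactly the condition $r\le r_u(k)$. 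To close the argument I would verify that $\varphi(r):=8r+4kr\sqrt{1+r}-(1-r)^2$ has $\varphi(0)=-1<0$ and $\varphi(1^-)=8+4\sqrt{2}\,k>0$, and is strictly increasing on $[0,1)$ (each summand is, since $2(1-r)>0$), so $\varphi$ has a unique zero $r_u(k)\in(0,1)$ and $\varphi(r)\le0$ precisely for $r\le r_u(k)$.

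The step I expect to be the main obstacle is the coefficient bound $|a_n|\le 4nd$: one must invoke the correct subordination coefficient estimate for a (possibly unnormalized) univalent majorant and convert the derivative quantity $|\psi'(0)|$ into the geometric distance $d$ through the Koebe theorem. Once that is in place, the remaining growth estimates, the Cauchy--Schwarz computation, and the monotonicity analysis of $\varphi$ are routine, and Corollary \ref{cor-5.5} follows by letting $k\to1$.
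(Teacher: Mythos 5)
Your proposal is correct and follows essentially the same route as the paper: the paper proves Theorem \ref{th-5.4} using exactly your ingredients (the Rogosinski subordination bound $|a_n|\le n|\psi'(0)|$ combined with the Koebe quarter theorem to get $|a_n|\le 4n\,dist(\psi(0),\partial\psi(\mathbb{D}))$, then Lemma \ref{lem-5.1} with Cauchy--Schwarz producing the $\sqrt{1+r}$ factor, and the bound $|h(z)|\le|\psi(0)|+\sum_{n\ge1}|a_n|r^n$), and then obtains Corollary \ref{cor-5.5} by letting $k\to1$, just as you do. Your added monotonicity check for $\varphi(r)=8r+4kr\sqrt{1+r}-(1-r)^2$ is a routine detail the paper leaves implicit.
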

\begin{proof}[\bf Proof of Corollary \ref{cor-5.5}]
	To prove Corollary \ref{cor-5.5}, allow $k\rightarrow 1$ in Theorem \ref{th-5.4}.
\end{proof}
We now discuss the proof of Theorems \ref{th-5.3} and \ref{th-5.4} in detail.
\begin{proof}[\bf Proof of Theorem \ref{th-5.3}]
	 Let $h(z)=\sum_{n=0}^{\infty}a_nz^n\prec \psi(z) ,$ where $\psi$ is a univalent and convex mapping in $\mathbb{D}$ onto a convex domain $\psi(\mathbb{D}).$	Then it is well known  that 
 (see \cite{Duren-1983,Goluzin-Trudy-1951}), for all $z\in\mathbb{D}$ and $n\geq 1$,  
\begin{align*}
	\dfrac{1}{2}|\psi^{\prime}(z)|(1-|z|^2)\leq dist(\psi(0),\partial\psi(\mathbb{D}))\leq |\psi^{\prime}(z)|(1-|z|^2),\; \mbox{and}\; |a_n|\leq |\psi^{\prime}(z)|.
\end{align*} 
Plugging $z=0$ in above inequalities and thus we obtain $|a_n|\leq 2dist(\psi(0),\partial\psi(\mathbb{D}))$ for $n\geq1.$ 
Consequently, 
\begin{align*}
	\sum_{n=1}^{\infty}|a_n|r^n\leq 2\;dist(\psi(0),\partial\psi(\mathbb{D}))\sum_{n=1}^{\infty}r^n. 
\end{align*}
Because $f=h+\overline{g}$ is a $K$-quasiconformal sense-preserving harmonic mapping so that $|g^{\prime}(z)|\leq k|h^{\prime}(z)|$ in $\mathbb{D}$, where $0\leq k\leq 1$, by Lemma \ref{lem-5.1} and Cauchy-Schwarz inequality, it follows that 
\begin{align*}
	\sum_{n=1}^{\infty}|b_n|r^n&\leq\sqrt{\sum_{n=1}^{\infty}|b_n|^2r^n}\sqrt{\sum_{n=1}^{\infty}r^n}\leq k\sqrt{\sum_{n=1}^{\infty}|a_n|^2r^n}\sqrt{\sum_{n=1}^{\infty}r^n}\\&\leq 2k\;dist(\psi(0),\partial\psi(\mathbb{D}))\sum_{n=1}^{\infty}r^n.
\end{align*}
Also, because $h\prec \psi, $ it follows that $h(0)=\psi(0)$ and 
\begin{align*}
	|h(z)|\leq |h(0)|+|h(z)-h(0)|=|\psi(0)|+\bigg|\sum_{n=1}^{\infty}a_nz^n\bigg|\leq |\psi(0)|+2\;dist(\psi(0),\partial\psi(\mathbb{D}))\sum_{n=1}^{\infty}r^n.
\end{align*}
Thus, we obtain that 
\begin{align*}
|h(z)|+\sum_{n=1}^{\infty}(|a_n|+|b_n|)r^n&\leq |\psi(0)|+(4+2k)dist(\psi(0),\partial\psi(\mathbb{D}))\sum_{n=1}^{\infty}r^n\\&=|\psi(0)|+dist(\psi(0),\partial\psi(\mathbb{D}))\dfrac{(4+2k)r}{1-r}\\&\leq |\psi(0)|+dist(\psi(0),\partial\psi(\mathbb{D}))
\end{align*}
for $r\leq r_{u,c}= {1}/{(5+2k)}.$ Plugging $k={(K-1)}/{(K+1)}$ into $ r_{u,c} $ gives the desired result. To show the sharpness part, we consider the function $ 	\psi(z)=h(z)=\frac{1}{1-z}=\sum_{n=0}^{\infty}z^n $ and $g^{\prime}(z)=k\lambda h^{\prime}(z),$ where $\lambda\in \mathbb{D}.$ Then it is easy to see that 
\begin{align*}
	dist(\psi(0),\partial\psi(\mathbb{D}))=\dfrac{1}{2}, \;\; |\psi(0)|=1\;\;\mbox{and}\;\; g(z)=k\lambda\dfrac{z}{1-z}=k\lambda\sum_{n=1}^{\infty}z^n.
\end{align*}
A simple computation shows that 
\begin{align*}
	|h(r)|+\sum_{n=1}^{\infty}(|a_n|+|b_n|)r^n=\dfrac{1}{1-r}+\sum_{n=1}^{\infty}(1+k|\lambda|)r^n=\dfrac{1}{1-r}+(1+k|\lambda|)\dfrac{r}{1-r}
\end{align*}
which is bigger than or equal to $1+1/2$ if and only if 
\begin{align*}
	r\geq\dfrac{1}{5+2k|\lambda|}=\dfrac{K+1}{5K+5+2|\lambda|(K-1)}.
\end{align*}
This shows that the number $(K+1)/(7K+3)$ cannot be improved since $|\lambda|$ could be chosen so close to $1$ from left. This completes the proof.
\end{proof}
\begin{proof}[\bf Proof of Theorem \ref{th-5.4}]
	Let $h(z)=\sum_{n=0}^{\infty}a_nz^n\prec \psi(z) ,$ where $\psi$ is a univalent in $\mathbb{D}$ onto a simply connected domain $\psi(\mathbb{D}).$	Then it is well known  that 
	(see \cite{Duren-1983,Goluzin-Trudy-1951}), for all $z\in\mathbb{D}$ and $n\geq 1$,  
\begin{align*}
	\dfrac{1}{4}|\psi^{\prime}(z)|(1-|z|^2)\leq dist(\psi(0),\partial\psi(\mathbb{D}))\leq |\psi^{\prime}(z)|(1-|z|^2),\; \mbox{and}\; |a_n|\leq n|\psi^{\prime}(z)|.
\end{align*} 
	Plugging $z=0$ in above inequalities and thus, we obtain \begin{align*}
		|a_n|\leq 4n\hspace{0.3mm}dist(\psi(0),\partial\psi(\mathbb{D}))
	\end{align*} 
for $n\geq1.$ 	Consequently, we obtain
\begin{align*}
\sum_{n=1}^{\infty}|a_n|r^n\leq  4\hspace{0.3mm}dist(\psi(0),\partial\psi(\mathbb{D}))\dfrac{r}{(1-r)^2}. 
\end{align*}
Moreover, because $f=h+\overline{g}$ is a $K$-quasiconformal sense-preserving harmonic mapping so that $|g^{\prime}(z)|\leq k|h^{\prime}(z)|$ in $\mathbb{D}$, where $0\leq k\leq 1$, by Lemma \ref{lem-5.1} and Cauchy-Schwarz inequality, it follows that 
\begin{align*}
\sum_{n=1}^{\infty}|b_n|r^n&\leq 4k\hspace{0.3mm}dist(\psi(0),\partial\psi(\mathbb{D}))\sqrt{\sum_{n=1}^{\infty}n^2r^n}\sqrt{\sum_{n=1}^{\infty}r^n}\\&= 4k\hspace{0.3mm}dist(\psi(0),\partial\psi(\mathbb{D}))\sqrt{\dfrac{r(1+r)}{(1-r)^3}}\sqrt{\dfrac{r}{1-r}}\\&\leq 4k\hspace{0.3mm}dist(\psi(0),\partial\psi(\mathbb{D}))\dfrac{r\sqrt{1+r}}{(1-r)^2}.
\end{align*}
Also, because $h\prec \psi, $ it follows that $h(0)=\psi(0)$ and 
\begin{align*}
|h(z)|&\leq |h(0)|+|h(z)-h(0)|=|\psi(0)|+|\sum_{n=1}^{\infty}a_nz^n|\\&\leq |\psi(0)|+4\hspace{0.3mm}dist(\psi(0),\partial\psi(\mathbb{D}))\dfrac{r}{(1-r)^2}.
\end{align*}
Thus, we obtain 
\begin{align*}
|h(z)|+\sum_{n=1}^{\infty}(|a_n|+|b_n|)r^n&\leq |\psi(0)|+\left(\dfrac{8r}{(1-r)^2}+\dfrac{4kr\sqrt{1+r}}{(1-r)^2}\right)dist(\psi(0),\partial\psi(\mathbb{D}))
\end{align*}  
which is less than or equal to $|\psi(0)|+dist(\psi(0),\partial\psi(\mathbb{D}))$ if and only if 
\begin{align*}
	\dfrac{8r}{(1-r)^2}+\dfrac{4kr\sqrt{1+r}}{(1-r)^2}\leq 1.
\end{align*}
This gives $|z|=r\leq r_u,$ where $r_u=r_u(k)$ is as in the statement.
\end{proof}

\noindent{\bf Acknowledgment:} The authors are grateful to the referee(s) for providing insightful suggestions that contributed significantly to the improvement of the paper. 
\vspace{1.2mm}

\noindent\textbf{Compliance of Ethical Standards:}\\

\noindent\textbf{Conflict of interest.} The authors declare that there is no conflict  of interest regarding the publication of this paper.\vspace{1.2mm}


\noindent\textbf{Funding.} This research did not receive any specific grant from funding agencies in the public, commercial, or
not-for-profit sectors. \vspace{1.5mm}

\noindent\textbf{Author's contribution.} Each of the authors has contributed equally to preparing the manuscript.

\end{document}